\documentclass[a4paper,twoside,11pt,reqno]{amsart}
\usepackage[english]{babel}
\usepackage[utf8]{inputenc}
\usepackage{amsmath}
\usepackage{amssymb}
\usepackage{amsfonts}
\usepackage{amsthm}
\usepackage{mathrsfs}
\usepackage{hyperref}
\hypersetup{
     colorlinks=true,
     linkcolor=blue,
     filecolor=blue,
     citecolor=purple,      
     urlcolor=cyan,
     }
\usepackage{xcolor}
\usepackage{dsfont}
\usepackage{physics}
\usepackage{stmaryrd}
\usepackage{upgreek}

\usepackage{tikz-cd}
\usepackage{tikz}
\usepackage{tikzsymbols}
\usepackage{mathtools}
\usetikzlibrary{matrix,decorations.pathreplacing}

%%% Enumerate %%%
\usepackage{enumerate}

%%% Diagrams %%%
\usepackage[matrix,arrow,cmtip]{xy} % curve, frame, graph, 2cell, all
\SelectTips{cm}{}
\newdir{(}{{}*!/-5pt/@^{(}}
\newdir{(x}{{}*!/-5pt/@_{(}}
\newdir{+}{{}*!/-9pt/{}}
\newdir{>+}{@{>}*!/-9pt/{}}
\entrymodifiers={+!!<0pt,\fontdimen22\textfont2>}
%\UseAllTwocells

%%% Theorems %%%
%\usepackage[nosubsections]{mytheorems2}
\theoremstyle{plain}
\newtheorem{theorem}[subsubsection]{Theorem}
\newtheorem*{theorem*}{Theorem}
\newtheorem{lemma}[subsubsection]{Lemma}
\newtheorem{proposition}[subsubsection]{Proposition}
\newtheorem{corollary}[subsubsection]{Corollary}

\newtheorem{question}[subsubsection]{Question}

\newtheorem{thmx}{Theorem}
 % "letter-numbered" theorems

\newtheorem{corx}[thmx]{Corollary}

\theoremstyle{definition}
\newtheorem{definition}[subsubsection]{Definition}

\newtheorem{example}[subsubsection]{Example}

\theoremstyle{remark}
\newtheorem{remark}[subsubsection]{Remark}

\numberwithin{equation}{section}

%%% General macros %%%
\newcommand{\NN}{\mathbf{N}} % Natural numbers
\newcommand{\ZZ}{\mathbf{Z}} % Integers
\newcommand{\QQ}{\mathbf{Q}} % Rationals
\newcommand{\RR}{\mathbf{R}} % Reals
\newcommand{\CC}{\mathbf{C}} % Complex numbers
\newcommand{\PP}{\mathbf{P}} % Projective space
 % 
 % Disk
 % 
\newcommand{\GG}{\mathbf{G}} % 
\newcommand{\kk}{\mathbf{k}} % Field
 % Overfield
 % Identity map or trivial line bundle or identity group

\newcommand{\q}{\; / \;}
\newcommand\inj{\hookrightarrow}

 % Weight
\DeclareMathOperator{\Hom}{Hom}
 % Kernel
\newcommand{\Coker}{\mathrm{Coker}} % Cokernel
 % Kernel
 % least common multiple
 % Characteristic
\newcommand{\longeq}{=\joinrel=} % Long equality
 % Set of bijections
\newcommand{\Eu}{\mathrm{Eu}} % Topological Euler characteristic

% Mathbf
\newcommand{\ba}{\mathbf{a}} % vector-a
 % vector-b
 % vector-d
\newcommand{\bB}{\mathbf{B}} % vector-B
\newcommand{\bI}{\mathbf{I}} % vector-I
\newcommand{\be}{\mathbf{e}} % vector-e
\newcommand{\bE}{\mathbf{E}} % exceptional-rays
\newcommand{\bw}{\mathbf{w}} % vector-w
 % vector-z
\newcommand{\bv}{\mathbf{v}} % vector-v
\newcommand{\bu}{\mathbf{u}} % vector-u
 % vector-m
\newcommand{\bn}{\mathbf{n}} % vector-n
\newcommand{\bX}{\mathbf{X}} % vector-X
 % vector-O
 % vector-Y
 % vector-S
\newcommand{\bzero}{\mathbf{0}} % vector-0
 % unit-vector

% mathrm

\newcommand{\uu}{\mathrm{u}}

% \sqrt with better spacing, e.g., (\xsqrt[N]{D})

% Mathfrak
\newcommand\fa{\mathfrak{a}}

\newcommand\fS{\mathfrak{S}}

\newcommand\fl{\mathfrak{l}}

% Mathscr

\newcommand\sM{\mathscr{M}} 

\newcommand\sO{\mathscr{O}}

\newcommand\sS{\mathscr{S}}

\newcommand\sX{\mathscr{X}}

% Mathcal

\newcommand\cF{\mathcal{F}}
\newcommand\cM{\mathcal{M}}

\newcommand\cS{\mathcal{S}}

% Mathds
\newcommand\dB{\mathds{B}}

% Cohomology

% Algebraic geometry

\renewcommand{\AA}{\mathbf{A}} % Affine space (NB! overwrites A with a ring which could be used in the bibliography!)
\newcommand{\LL}{\mathbf{L}} %

 % Support
 % Symmetric algebra
\DeclareMathOperator{\Spec}{Spec} % Spec
 % Global Spec
\DeclareMathOperator{\Proj}{Proj} % Proj
 % Global Proj
\DeclareMathOperator{\bl}{\mathbf{Bl}} % Usual blow-up
 % Toroidal weighted blow-up
 % Nulti-weighted blow-up
 % Cartier divisors
 % Quasi-coherent sheaves
 % Coherent sheaves
 % Picard group
 % Associated points
 % Codimension
 % Zariski-Riemann space
 % Integral closure
 % Valuation morphism
 % Regular locus
 % Order
 % Log order
 % Log invariant
 % Maximal contact
 % Ext
 % Groupification of monoid
\newcommand{\sing}{\mathrm{sing}} % Singular cohomology
\newcommand{\GrpSch}{\mathrm{Grp-Sch}} 
 % smooth
 % log smooth
 % reduced structure
 % Extended Rees algebra
 % rigidification
 % Analytification
\newcommand{\tg}{\mathrm{top}} % Topological
\newcommand{\mot}{\mathrm{mot}} % Motivic

% Toric Geometry

% Group schemes
\newcommand{\Gm}{\mathbf{G}_\mathrm{m}}

\newcommand{\Gmu}{\pmb{\mu}}

% Stacks
 % Stacky Proj
 % Stacky projective bundle
 % Classifying stack
 % coarse space

% Maps
 % Projection map

% xtwoheadrightarrow

% xmapsfrom
\makeatletter
\newcommand\xmapsfrom[2][]{%
  \ext@arrow 0395{\longmapsfromfill@}{#1}{#2}}
\newcommand\longmapsfromfill@{%
  \arrowfill@\leftarrow\relbar{\relbar\mapsfromchar}}
\makeatother

% For this paper
\newcommand{\vertex}{\mathrm{vert}}

\newcommand{\std}{\mathrm{std}}
\newcommand{\exc}{\mathrm{ex}}

\DeclareMathOperator{\Var}{\textup{\sffamily Var}} % Varieties
 % Algebras
 % Sets
 % Compact stuff
\newcommand{\relint}{\mathrm{relint}} % Relative interior

%%% Augmented matrix %%%
\makeatletter
\renewcommand*\env@matrix[1][*\c@MaxMatrixCols c]{%
  \hskip -\arraycolsep
  \let\@ifnextchar\new@ifnextchar
  \array{#1}}
\makeatother

%%% Stacks Project %%%

%%% Correspondence %%%

\newcommand{\ChMingHao}[1]{{\color{black} #1}}

\begin{document}

\title[Around the non-degenerate motivic monodromy conjecture]{Around the motivic monodromy conjecture for non-degenerate hypersurfaces}
%\date{\today}

\author{Ming Hao Quek}
\address{Department of Mathematics, Brown University, Box 1917, 151 Thayer Street, Providence, RI 02912}
\email{\href{mailto:ming_hao_quek@brown.edu}{ming\_hao\_quek@brown.edu}}

%\classification{14E15 (primary), 14A20, 14M25 (secondary)}
%\keywords{resolution of singularities, stacks, logarithmic geometry}
%\thanks{This research work was supported in part by funds from BSF grant 2018193 and NSF grant DMS-2100548.}

\maketitle

\begin{abstract}
We provide a new, geometric proof of the motivic monodromy conjecture for non-degenerate hypersurfaces in dimension $3$, which has been proven previously by the work of Lemahieu--Van Proeyen and Bories--Veys. More generally, given a non-degenerate complex polynomial $f$ in any number of variables and a set $\dB$ of $B_1$-facets of the Newton polyhedron of $f$ with consistent base directions, we construct a stack-theoretic embedded desingularization of $f^{-1}(0)$ above the origin, whose set of numerical data excludes any known candidate pole of the motivic zeta function of $f$ at the origin that arises solely from facets in $\dB$. We anticipate that the constructions herein might inspire new insights as well as new possibilities towards a solution of the conjecture.
\end{abstract}

\section{Introduction}\label{S:introduction}

Throughout this paper, let $\kk$ be a subfield of $\CC$, and fix $0 \neq n \in \NN$. For every $\ba := (a_1,\dotsc,a_n) \in \NN^n$, let $\pmb{x}^\ba$ denote the monomial $x_1^{a_1}\dotsm x_n^{a_n}$ in $\kk[x_1,\dotsc,x_n]$. Let $f = \sum_{\ba \in \NN^n}{c_\ba \cdot \pmb{x}^\ba} \; \in \; \kk[x_1,\dotsc,x_n]$ be a non-constant polynomial satisfying $c_\bzero = f(\bzero) = 0$, and let $V(f)$ be the hypersurface defined by $f = 0$ in $\AA^n := \Spec(\kk[x_1,\dotsc,x_n])$. Let $\Upgamma(f)$ denote the Newton polyhedron of $f$, defined as the convex hull in $\RR^n$ of the finite union \[
    \bigcup \bigl\{\ba + \RR_{\geq 0}^n \colon \ba \in \NN^n,\; c_\ba \neq 0\bigr\}.
\]
For every face $\varsigma$ of $\Upgamma(f)$, we set \begin{equation}\label{EQ:f-tau}
    f_\varsigma \; := \; \sum_{\ba \in \NN^n \cap \varsigma}{c_\ba \cdot \pmb{x}^\ba}.
\end{equation}
We then say that $f$ is {\sffamily non-degenerate}, if for every compact face $\varsigma$ of $\Upgamma(f)$, the closed subscheme $V(f_\varsigma) \subset \AA^n$ is non-singular in the torus $\Gm^n \subset \AA^n$. This non-degeneracy condition was first introduced in \cite{kouchnirenko-newton-polyehdra}, and it guarantees that the singularity theory of $V(f) \subset \AA^n$ at the origin $\bzero \in \AA^n$ is, to a certain extent, governed by $\Upgamma(f)$. The extent to which the former is governed by the latter is the main interest of this paper. 

Namely, this paper provides a \emph{geometric} explanation (Theorem~\ref{T:refined-desingularization}) for the proposition \cite[Proposition 3.8]{esterov-lemahieu-takeuchi-monodromy-conjecture} that any pole of the topological zeta function of $f$ at $\bzero \in \AA^n$ \cite{denef-loeser-local-zeta-functions} cannot arise exclusively from a set of $B_1$-facets of $\Upgamma(f)$ with consistent base directions. In the process, we obtain a \emph{smaller} set of candidate poles for the motivic zeta function of $f$ at $\bzero \in \AA^n$ \cite{denef-loeser-geometry-arc-spaces} than what was previously known in general (Theorem~\ref{T:fake-poles}), and in particular we deduce (via Theorem~\ref{T:fake-poles-n=3}) a new, geometric proof of:

\begin{theorem*}[\textrm{$=$ \cite[Theorem 10.3]{bories-veys-monodromy-conjecture}}]
The motivic monodromy conjecture holds for non-degenerate polynomials in $n=3$ variables.
\end{theorem*}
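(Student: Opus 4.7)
The plan is to derive this theorem from Theorem~\ref{T:fake-poles-n=3}, combined with the classical Varchenko/A'Campo analysis of monodromy eigenvalues for non-degenerate polynomials. By Denef--Loeser's formula for the motivic zeta function $Z_{\mot}(f,s)$ at $\bzero$, one has a finite list of candidate poles indexed by the compact facets $\tau$ of $\Upgamma_+(f)$: each such $\tau$, with primitive inner normal vector $\bv_\tau = (v_{\tau,1},v_{\tau,2},v_{\tau,3}) \in \NN^3$, contributes the candidate pole $s_\tau := -\sigma(\tau)/N(\tau)$, where $\sigma(\tau) := v_{\tau,1}+v_{\tau,2}+v_{\tau,3}$ and $N(\tau) := \min_{\ba \in \Upgamma_+(f)} \bv_\tau \cdot \ba$. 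The motivic monodromy conjecture asserts that whenever $s_0$ is an actual pole of $Z_{\mot}(f,s)$, the value $e^{2\pi i s_0}$ is a monodromy eigenvalue of $f$ at some point of $V(f)$ near $\bzero$, so it suffices to verify this implication for every $s_\tau$ not excluded as a ``fake'' candidate.

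The first step would be to invoke Theorem~\ref{T:fake-poles-n=3} to discard the candidates contributed \emph{solely} by facets of $B_1$-type with consistent base directions. For such an $s_\tau$, the stack-theoretic embedded desingularization of Theorem~\ref{T:refined-desingularization} (applied with $\dB$ equal to the relevant set of $B_1$-facets) furnishes a resolution whose numerical data omit $s_\tau$, so $s_\tau$ cannot be a genuine pole. What remain are candidate poles that arise also from at least one non-$B_1$ compact facet $\tau'$. For each such surviving $s_\tau$, the second step is to produce $e^{2\pi i s_\tau}$ as a monodromy eigenvalue of $f$ near $\bzero$ via the A'Campo/Varchenko formula for the zeta function of monodromy: in dimension $n=3$, a compact facet of $\Upgamma_+(f)$ that is \emph{not} of $B_1$-type contributes a factor to the monodromy zeta function with nonzero exponent, so one of its roots of unity has the form $e^{2\pi i s_\tau}$ and hence is a monodromy eigenvalue of $f$ at some point of $V(f)$ near $\bzero$.

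The main obstacle will be the borderline cases in which $s_\tau$ is simultaneously contributed by $B_1$-facets and by non-$B_1$-facets, since Theorem~\ref{T:fake-poles-n=3} only eliminates $s_\tau$ when \emph{every} facet contributing to $s_\tau$ is a $B_1$-facet with a common base direction. Dealing with these cases will require a finite but delicate case analysis over the possible facet-types of $\Upgamma_+(f) \subset \RR^3$, in the spirit of Lemahieu--Van Proeyen and Bories--Veys, showing that in each surviving configuration some non-$B_1$ facet still supplies the required monodromy eigenvalue via A'Campo's formula. The gain over the existing proofs is conceptual: the combinatorially heaviest part---elimination of the $B_1$-type fake poles---is now accomplished by a single geometric construction (Theorem~\ref{T:refined-desingularization}), which, unlike the previous arguments, is amenable to generalization beyond $n=3$.
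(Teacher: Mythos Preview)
Your overall strategy matches the paper's: apply Theorem~\ref{T:fake-poles-n=3} to remove fake candidates, then show every surviving candidate induces a monodromy eigenvalue. But you have misidentified which candidates survive, and this leaves a genuine gap.

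After Theorem~\ref{T:fake-poles-n=3}, the surviving candidates $s_\circ \in \Uptheta(f) \smallsetminus \{-1\}$ are exactly those for which $\cF(f;s_\circ)$ is \emph{not} a set of $B_1$-facets with consistent base directions. This splits into two cases, not one:
\begin{enumerate}
    \item[(a)] $\cF(f;s_\circ)$ contains at least one non--$B_1$-facet;
    \item[(b)] $\cF(f;s_\circ)$ consists entirely of $B_1$-facets, but \emph{without} consistent base directions.
\end{enumerate}
Your proposal only treats case~(a), and the ``main obstacle'' you flag (mixed $B_1$ and non--$B_1$ contributions) is still case~(a), where the presence of a non--$B_1$-facet already supplies the eigenvalue via the A'Campo/Varchenko argument --- this is \cite[Theorem 10]{lemahieu-van-proeyen-nondegenerate-surface-singularities}, which the paper simply cites. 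It is not an obstacle.

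The case you have omitted is~(b), and it is genuine: see Example~\ref{EX:x^2+yz} and Remark~\ref{R:x^2+yz}, where $f = x_1^2 + x_2x_3$ has $\cF(f;-\tfrac{3}{2})$ consisting of two adjacent $B_1$-facets with \emph{inconsistent} base directions, $-\tfrac{3}{2}$ is an honest pole of $Z_{\tg,\bzero}(f;s)$, and there is no non--$B_1$-facet to invoke. Theorem~\ref{T:fake-poles-n=3} cannot drop this pole, and your A'Campo step does not apply. The paper closes this gap by invoking \cite[Theorem 15]{lemahieu-van-proeyen-nondegenerate-surface-singularities}, which shows directly that in case~(b) the number $s_\circ$ still induces a monodromy eigenvalue near $\bzero$. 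You need an analogous input here; without it the argument is incomplete.
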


\subsection{Statement of objectives, motivations, and results}\label{1.1}

We assume throughout this introduction that $f \in \kk[x_1,\dotsc,x_n]$ is non-degenerate. %$X_0 := V(f) \subset \AA^n =: X$, and consider the monodromy conjecture for $f$ at the origin $\bzero \in \AA^n$. 

\begin{xpar}[Conventions on the Newton polyhedron of $f$]\label{X:convention-1.2-A}
For $m \in \NN$, set $[m] := \{1,2,\dotsc,m\}$ (i.e. $[0] = \varnothing$). Let $N = \ZZ^n$, with standard basis vectors $\be_i$ ($i \in [n]$), and for $R$ a subring of $\RR$, we set $N_R = N \otimes_\ZZ R$ with positive half-space $N_R^+ = R^n_{\geq 0} \subset N_R$ (where $R_{\geq 0} = R \cap \RR_{\geq 0}$). Set $M = N^\vee$ be the dual lattice, with standard dual basis vectors $\be_i^\vee$ ($i \in [n]$). Likewise for $R$ a subring of $\RR$, we set $M_R = M \otimes_\ZZ R$ with positive half-space $M_R^+ = \Hom_\NN(R^n_{\geq 0},R_{\geq 0}) \subset M_R$. We also write $N^+$ for $N_\ZZ^+$ and $M^+$ for $M_\ZZ^+$.

For reasons related to toric geometry, we view $\Upgamma(f)$ as a polyhedron in $M_\RR^+$ (instead of $N_\RR^+$). For a face $\varsigma$ of $\Upgamma(f)$, \[
    \textrm{we write } \;\; \begin{cases}
    \varsigma' \prec \varsigma \\
    \varsigma' \prec^{\ChMingHao{1}} \varsigma \\
    \vertex(\varsigma) \\
    \dim(\varsigma)
    \end{cases} \quad \textrm{for} \;\; \begin{cases}
    \textrm{a face $\varsigma'$ of $\varsigma$}. \\
    \textrm{a \ChMingHao{facet ($=$ codimension $1$ face)} $\varsigma'$ of $\varsigma$}. \\
    \textrm{the set of vertices of $\varsigma$}. \\
    \textrm{the dimension of the affine span of $\varsigma$.}
    \end{cases}
\]
We usually use the letter $\tau$ instead of $\varsigma$ to denote facets of $\Upgamma(f)$. If two facets $\tau_1$ and $\tau_2$ of $\Upgamma(f)$ intersect in a common facet (i.e. $\tau_1 \cap \tau_2 \prec^1 \tau_1, \tau_2$), we say that $\tau_1$ and $\tau_2$ are {\sffamily adjacent}, and write \[
    \tau_1 \frown \tau_2.
\]
Finally, for $i \in [n]$, let $H_i$ denote the coordinate hyperplane in $M_\RR$ defined by $\be_i = 0$. For $\tau \prec^1 \Upgamma(f)$, let $H_\tau$ be its affine span in $M_\RR$, with equation $\bigl\{\ba \in M_\RR \colon \ba \cdot \bu_\tau = N_\tau\bigr\}$, where the vector $\bu_\tau := (u_{\tau,i})_{i=1}^n$ is the unique primitive vector in $N^+$ that is normal to $H_\tau$. If $N_\tau > 0$ (i.e. $\tau$ is not contained in any coordinate hyperplane $H_i$ in $M_\RR$), we define the {\sffamily numerical datum} of $\tau$ as: \begin{equation}\label{EQ:numerical-datum}
    \upeta_\tau \; := \; (N_\tau,\abs{\bu_\tau}) \; := \; \bigl(N_\tau,u_{\tau,1} + u_{\tau,2} + \dotsb + u_{\tau,n}\bigr)
\end{equation}
and the \ChMingHao{\sffamily candidate pole} of $\tau$ as the root of the polynomial $N_\tau s + \abs{\bu_\tau}$, i.e. \begin{equation}\label{EQ:ratio}
    s_\tau \; := \; -\abs{\bu_\tau}/N_\tau.
\end{equation}
Finally, for $s_\circ \in \QQ_{<0}$, we let $\cF(f;s_\circ) := \{\tau \prec^1 \Upgamma(f) \colon N_\tau > 0$ and $s_\tau = s_\circ \}$.
\end{xpar}

\begin{xpar}\label{X:motivic-monodromy-conjecture}
The first main theorem of this paper concerns the na\"ive\footnote{\ChMingHao{Our main theorems apply more generally to a refined version of $Z_{\mot,\bzero}(f;s)$ \cite[Chapter 7, \S4.1.3 and Remark 4.2.8]{chambert-loir-nicaise-sebag-motivic-integration}. For simplicity, we do not discuss that version here.}} motivic zeta function of $f$ at $\bzero \in \AA^n$ (cf. \cite[Definition 3.2.1]{denef-loeser-geometry-arc-spaces}, and \cite[Chapter 7, \S 3.3.1]{chambert-loir-nicaise-sebag-motivic-integration}), which we shall denote by $Z_{\mot,\bzero}(f;s)$, and is tied to the singularity theory of $V(f) \subset \AA^n$ at $\bzero \in \AA^n$ via the motivic monodromy conjecture of \ChMingHao{Denef--Loeser}.

In our setting, their conjecture states that there should exist a set of candidate poles $\Uptheta$ for $Z_{\mot,\bzero}(f;s)$ (in the sense of \cite[Definition 5.4.1]{bultot-nicaise-log-smooth-models}) such that every $s_\circ \in \Uptheta$ {\sffamily induces a monodromy eigenvalue of $f$ near $\bzero \in \CC^n$} in the following sense. Given any neighbourhood $U$ of $\bzero$ in $f^{-1}(0) \subset \CC^n$, there exists $x \in U$ such that $\exp(2\pi 
\sqrt{-1} s_\circ)$ is an eigenvalue of the monodromy transformation acting on the singular cohomology $\bigoplus_{i \geq 0}{H^i_\sing(F_{f,x},\ZZ)}$ of the Milnor fiber $F_{f,x}$ of $f$ at $x$, cf. \cite{milnor-singular-points-complex-hypersurfaces} and \cite[Chapter 1, \S 3.4.1]{chambert-loir-nicaise-sebag-motivic-integration}.
\end{xpar}

\begin{xpar}\label{X:candidate-poles}
To start, it has been established in the literature (cf. \cite[Theorem 10.5]{bories-veys-monodromy-conjecture} or \cite[Theorem 8.3.5]{bultot-nicaise-log-smooth-models}) that \begin{equation}\label{EQ:candidate-poles-0}
    \Uptheta(f) \; := \; \{-1\} \; \cup \; \bigl\{s_\tau \colon \tau \prec^1 \Upgamma(f) \textrm{ with } N_\tau > 0\bigr\}
\end{equation}
is a set of candidate poles for $Z_{\mot,\bzero}(f;s)$. More precisely, the preceding statement can be explicated as follows: \begin{equation}\label{EQ:candidate-poles}
    Z_{\mot,\bzero}(f;s) \; \in \; \sM_\kk\bigl[\LL^{-s}\bigr]\left[\frac{1}{1-\LL^{-(Ns+\nu)}} \colon (N,\nu) \in \upeta(f)\right]
\end{equation}
where \begin{equation}\label{EQ:numerical-data}
    \upeta(f) \; := \; \{(1,1)\} \; \cup \; \bigl\{\upeta_\tau \colon \tau \prec^1 \Upgamma(f) \textrm{ with } N_\tau > 0\bigr\}
\end{equation}
and $\sM_\kk$ denotes the localization of the Grothendieck ring $K_0(\Var_\kk)$ of $\kk$-varieties ($=$ finite-type $\kk$-schemes) with respect to the class $\LL$ of $\AA^1$. Note that the letter $T$ is sometimes used in place of the indeterminate $\LL^{-s}$. 
\end{xpar}

\begin{xpar}\label{X:difficulty}
Unfortunately, the main difficulty in establishing the motivic monodromy conjecture for a non-degenerate polynomial $f$ lies in the fact that \emph{not every} candidate pole in $\Uptheta(f)$ induces a monodromy eigenvalue of $f$ near $\bzero \in \AA^n$. Therefore, one desires for a smaller set of candidate poles for $Z_{\mot,\bzero}(f;s)$. This paper gives a partial answer to the question of when a strictly smaller set of candidate poles than $\Uptheta(f)$ exists for $Z_{\mot,\bzero}(f;s)$, which can be seen as a \emph{motivic upgrade} of some existing general results in the literature pertaining to a ``close relative'' of $Z_{\mot,\bzero}(f;s)$, namely the topological zeta function $Z_{\tg,\bzero}(f;s)$ of $f$ at $\bzero \in \AA^n$, cf. \cite{denef-loeser-local-zeta-functions} and \cite[Chapter 1, \S 3.3.1, equation (3.3.1.3)]{chambert-loir-nicaise-sebag-motivic-integration}. 
\end{xpar}

\begin{remark}\label{R:topological-zeta-function}
Indeed $Z_{\tg,\bzero}(f;s)$ is a ``close relative'' of $Z_{\mot,\bzero}(f;s)$ in the sense that $Z_{\mot,\bzero}(f;s)$ {\sffamily specializes} to $Z_{\tg,\bzero}(f;s)$ via the motivic measure: \[
    \Eu \; \colon \; \sM_\kk \; \xrightarrow{\quad\quad} \; \ZZ
\]
which sends a $\kk$-variety $X$ to the topological Euler characteristic of $X \otimes_\kk \CC$, cf. \cite[Section 3.4]{denef-loeser-geometry-arc-spaces} for details. In particular, one recovers in this way an analogue of \eqref{EQ:candidate-poles} for $Z_{\tg,\bzero}(f;s)$ (which was observed earlier in \cite[Theorem 5.3(ii)]{denef-loeser-local-zeta-functions}), namely that every pole of $Z_{\tg,\bzero}(f;s)$ lies in $\Uptheta(f)$. 
\end{remark}

\begin{xpar}
To segue into our main results, it is useful (as hinted in \ref{X:difficulty}) to first recall some existing results in the literature which demonstrate that occasionally some candidate poles $s_\tau$ in $\Uptheta(f) \smallsetminus \{-1\}$ are \emph{not} actual poles of $Z_{\tg,\bzero}(f;s)$. Few of these results are known for $Z_{\mot,\bzero}(f;s)$ prior to this paper, especially for general $n$. We start with the following definition:
\end{xpar}

\begin{definition}[$B_1$-facets, cf. {\cite[Definition 3.1]{esterov-lemahieu-takeuchi-monodromy-conjecture}}, {\cite[Definition 1.4.1]{larson-payne-stapledon-simplicial-nondegenerate-monodromy-conjecture}}]\label{D:B1-facets}
A facet $\tau \prec^1 \Upgamma(f)$ is called a {\sffamily $B_1$-facet} if there exists $\bv \in \vertex(\tau)$ and $i \in [n]$ such that: \begin{enumerate}
    \item[(a)] The $i$\textsuperscript{th} coordinate of $\bv$ is $1$.
    \item[(b)] \ChMingHao{$\varnothing \neq \vertex(\tau) \smallsetminus \{\bv\} \subset H_i$.}
    \item[(c)] $\tau$ is {\sffamily compact in the $i$\textsuperscript{th} coordinate}, i.e. $\tau + \RR_{\geq 0}\be_i^\vee \not\subset \tau$ (cf. \ref{X:first-meet-locus}(iii)).
\end{enumerate}
Note that in particular, (b) and (c) imply that $H_i \cap \tau \prec^1 \tau$. In this case, we call $\bv$ an {\sffamily apex} of $\tau$ with corresponding {\sffamily base direction} $i \in [n]$. Note that the apex $\bv$ and the base direction $i$ uniquely determine each other.
\end{definition}

\begin{xpar}
Fix $-1 \neq s_\circ \in \QQ_{<0}$. It is known that if $\cF(f;s_\circ)$ only consists of one $B_1$-facet, then $s_\circ$ is not a pole of $Z_{\tg,\bzero}(f;s)$, cf. \cite[Proposition 3.7]{esterov-lemahieu-takeuchi-monodromy-conjecture}. More generally one might guess that conclusion is true whenever $\cF(f;s_\circ)$ comprises of only $B_1$-facets. However, this is \emph{false}, cf. Example~\ref{EX:x^2+yz} and Remark~\ref{R:x^2+yz} for a simple counterexample. One rectifies that guess (cf. \cite[Proposition 3.8]{esterov-lemahieu-takeuchi-monodromy-conjecture}) by further imposing the following condition on $\cF(f;s_\circ)$:
\end{xpar}  

\begin{definition}\label{D:consistent-B1-facets}
A set $\dB$ of $B_1$-facets of $\Upgamma(f)$ has {\sffamily consistent base directions} if there exists, for each facet $\tau \in \dB$, a choice of a distinguished base direction $b(\tau) \in [n]$, such that $b(\tau_1) = b(\tau_2)$ for every pair of adjacent facets $\tau_1, \tau_2 \in \dB$. In this case we call $\{b(\tau) \colon \tau \in \dB\}$ a {\sffamily set of consistent base directions} for $\dB$.
\end{definition}

The main contribution of this paper can now be stated as follows:

\begin{thmx}\label{T:fake-poles}
Let $\dB$ be a set of $B_1$-facets of $\Upgamma(f)$ with consistent base directions. Then \[
    \Uptheta^{\dag,\dB}(f) \; := \; \{-1\} \; \cup \; \bigl\{s_\tau \colon \tau \prec^1 \Upgamma(f) \textrm{ with } N_\tau > 0 \textrm{ and } \tau \notin \dB\bigr\}
\]
is a set of candidate poles for $Z_{\mot,\bzero}(f;s)$.
\end{thmx}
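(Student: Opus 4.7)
The plan is to deduce Theorem~\ref{T:fake-poles} directly from the refined stack-theoretic embedded desingularization of Theorem~\ref{T:refined-desingularization}, via the motivic change-of-variables formula on smooth Deligne--Mumford stacks (cf.\ \cite[Chapter 7]{chambert-loir-nicaise-sebag-motivic-integration}). Concretely, Theorem~\ref{T:refined-desingularization} is advertised as producing, for any set $\dB$ of $B_1$-facets with consistent base directions, a proper birational morphism $\pi : \sY \to \AA^n$ from a smooth Deligne--Mumford stack $\sY$ that is an embedded desingularization of $V(f)$ above $\bzero$, and whose set of numerical data is contained in $\{(1,1)\} \cup \{\upeta_\tau : \tau \prec^1 \Upgamma_+(f),\; N_\tau > 0,\; \tau \notin \dB\}$. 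The stacky change-of-variables formula then expresses $Z_{\mot,\bzero}(f;s)$ as a finite sum indexed by subsets of the components of $\pi^{-1}(V(f)) \cup \mathrm{Exc}(\pi)$ lying above $\bzero$, each summand being a rational function of $\LL^{-s}$ whose denominator is a product of factors $1 - \LL^{-(Ns+\nu)}$ indexed by the numerical data of the involved components. This places $Z_{\mot,\bzero}(f;s)$ in $\sM_\kk[\LL^{-s}]$ localized at $\{1-\LL^{-(Ns+\nu)}\}$ for $(N,\nu)$ in the above list, so that the slopes $s$ at which some denominator could vanish all lie in $\Uptheta^{\dag,\dB}(f)$, which is the desired conclusion.

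A small caveat worth spelling out in the argument is that root-stack and weighted-blowup factors inherent to a stacky desingularization may introduce stabilizer orders and cyclotomic classes into the numerators of the formula; these are invertible in $\sM_\kk$ after inverting $\LL$ and so contribute only to coefficients, not to denominators, and therefore do not enlarge the set of candidate poles. Modulo this standard bookkeeping, the deduction of Theorem~\ref{T:fake-poles} from Theorem~\ref{T:refined-desingularization} is essentially immediate.

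The real obstacle --- the substantive content of the paper --- is of course the construction of the refined desingularization in Theorem~\ref{T:refined-desingularization} itself; within the proof of Theorem~\ref{T:fake-poles} this is taken as a black box. The consistent-base-directions hypothesis of Theorem~\ref{T:fake-poles} is absorbed entirely through that input: it is precisely what ensures that the local stacky modifications designed to suppress each numerical datum $\upeta_\tau$ for $\tau \in \dB$ --- each of which depends on a choice of base direction for that facet --- are globally compatible along the shared facets of adjacent elements of $\dB$, and can therefore be assembled into a single global morphism $\pi : \sY \to \AA^n$ over $\bzero$. Once this input is in hand, no further use of the $B_1$-combinatorics is required to conclude Theorem~\ref{T:fake-poles}.
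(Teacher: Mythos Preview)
Your proposal is correct in outline and follows the same logical skeleton as the paper: one takes Theorem~\ref{T:refined-desingularization} (or more precisely its technical core Theorem~\ref{T:desingularization-new}) as the geometric input, and deduces Theorem~\ref{T:fake-poles} via a motivic change-of-variables formula applied to the resulting desingularization. The consistent-base-directions hypothesis is indeed consumed entirely by that geometric input.

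There is one technical difference worth noting. You invoke a generic change-of-variables formula ``on smooth Deligne--Mumford stacks'' and then add a caveat about stabilizer orders and cyclotomic classes in the numerators. The paper avoids this bookkeeping entirely by a different route: rather than working directly on the Deligne--Mumford stack $\sX_{\pmb{\Upsigma}^\dag}$, it passes to the \emph{coarse space} $X_{\pmb{\Upsigma}^\dag}$, which has only finite quotient singularities, and applies Veys's formula \cite[Theorem 4]{veys-motivic-zeta-functions-on-Q-Gorenstein-varieties} for the motivic zeta function on a $\QQ$-Gorenstein variety with a $\QQ$-simple normal crossings divisor. The relative canonical divisor \eqref{EQ:relative-canonical-divisor} and the pullback of $V(f)$ \eqref{EQ:proof-thm-A} are computed explicitly on $\sX_{\pmb{\Upsigma}^\dag}$ and then read off on the coarse space. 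This yields the membership of $Z_{\mot,\bzero}(f;s)$ in the desired localization of $\sM_\kk[\LL^{-s}]$ without any discussion of stabilizer contributions. Your route is not wrong, but the paper's choice of Veys's $\QQ$-Gorenstein formula is cleaner here and sidesteps the need for the caveat you raise.
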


\begin{xpar}\label{X:new-contribution}
We prove Theorem~\ref{T:fake-poles} towards the end of \S\ref{4.3}. The centerpiece of our proof ($=$ Theorem~\ref{T:refined-desingularization} below) is perhaps more satisfying than Theorem~\ref{T:fake-poles} itself, especially given that previous attempts to understand the topological zeta function analogue of Theorem~\ref{T:fake-poles}, or even special cases of Theorem~\ref{T:fake-poles}, used roundabout methods: namely, they typically involve a \emph{manipulation} of some explicit formula for $Z_{\tg,\bzero}(f;s)$ or $Z_{\mot,\bzero}(f;s)$, cf. formulae in \cite[Theorem 5.3(iii)]{denef-loeser-local-zeta-functions}, \cite[Theorem 4.2]{denef-hoornaert-newton-polyhedra}, and \cite[Theorem 10.5]{bories-veys-monodromy-conjecture}. In contrast, our proof is \emph{geometric} in nature, in the sense that we construct an appropriate embedded desingularization of $V(f) \subset \AA^n$ above $\bzero \in \AA^n$ that bears witness to Theorem~\ref{T:fake-poles}.
\end{xpar}

\begin{xpar}\label{X:approach-1}
To put our approach to Theorem~\ref{T:fake-poles} into perspective, we \ChMingHao{shift our attention} to our approach towards its weaker counterpart \eqref{EQ:candidate-poles-0}, i.e. \eqref{EQ:candidate-poles}. Given that there is a motivic change of variables formula for $Z_{\mot,\bzero}(f;s)$ under any proper, birational morphism $\uppi \colon X \to \AA^n$ (cf. \cite[Chapter 6, \S 4.3]{chambert-loir-nicaise-sebag-motivic-integration}), one natural hope towards proving \eqref{EQ:candidate-poles} would be to apply the change of variables to an appropriate embedded desingularization $\uppi \colon X \to \AA^n$ of $V(f) \subset \AA^n$ above $\bzero \in \AA^n$. A natural candidate for $\uppi$ would be the toric modification $\uppi_{\Sigma'} \colon X_{\Sigma'} \to \AA^n$ induced by any smooth subdivision $\Sigma'$ of the normal fan $\Sigma(f)$ of $\Upgamma(f)$. Indeed, one can show that the non-degeneracy condition on $f$ implies that $\uppi_{\Sigma'}$ desingularizes $V(f) \subset \AA^n$ above $\bzero \in \AA^n$, cf. \cite[Sections 9 and 10]{varchenko-monodromy-and-newton-diagram}. Unfortunately, subdividing $\Sigma(f)$ into $\Sigma'$ usually \emph{introduces new rays} to $\Sigma(f)$. One can show this process of adding new rays \ChMingHao{cannot show in general the existence of a set of candidate poles for $Z_{\mot,\bzero}(f;s)$ as small as $\Uptheta(f)$}.
\end{xpar}

\begin{xpar}\label{X:approach-2}
The above discussion suggests that one should perhaps \emph{avoid} the process of adding new rays, and instead work directly on $\Sigma(f)$ and its associated toric modification $\uppi_{\Sigma(f)} \colon X_{\Sigma(f)} \to \AA^n$, despite the fact that $\uppi_{\Sigma(f)}$ is usually not an embedded desingularization for $V(f) \subset \AA^n$ above $\bzero \in \AA^n$ (as $X_{\Sigma(f)}$ is usually singular).

Nevertheless, this was the approach in a recent paper of Bultot--Nicaise \cite{bultot-nicaise-log-smooth-models}, where they instead showed that if one endows $X_{\Sigma(f)}$ with the divisorial logarithmic structure $\cM$ associated to \ChMingHao{the reduction of} \[
    \uppi_{\Sigma(f)}^{-1}\bigl(V(f) \; \cup \; V(x_1) \; \cup \; V(x_2) \; \cup \; \dotsb \; \cup \; V(x_n)\bigr) \; \subset \; X_{\Sigma(f)}
\]
the logarithmic scheme $(X_{\Sigma(f)},\cM)$ is logarithmically smooth. They then related $Z_{\mot,\bzero}(f;s)$ to a different motivic zeta function associated to $X_{\Sigma(f)}$ and the Gelfand--Leray form $dx_1 \wedge dx_2 \wedge \dotsb \wedge dx_n / df$ (cf. Loeser--Sebag \cite{loeser-sebag-motivic-integration-smooth-rigid-varieties} and \cite[Definition 5.2.2]{bultot-nicaise-log-smooth-models}). Finally, the logarithmic smoothness of $(X_{\Sigma(f)},\cM)$ enables them to deduce an explicit formula for the latter zeta function, from which \eqref{EQ:candidate-poles} follows.
\end{xpar}

\begin{xpar}\label{X:approach-3}
In contrast, our approach towards \eqref{EQ:candidate-poles} is a \emph{stack-theoretic} re-interpretation of Bultot--Nicaise's approach, and allows one to work directly on $\Sigma(f)$ while still remaining in the realm of \emph{smooth ambient spaces}. The point here is that one can associate, to the potentially singular toric variety $X_{\Sigma(f)}$, a smooth toric Artin stack $\sX_{\Sigma(f)}$ whose good moduli space (in the sense of \cite{alper-good-moduli-spaces}) is $X_{\Sigma(f)}$, cf. \S\ref{3.1}. One can then show that the composition \[
    \Uppi_{\Sigma(f)} \; \colon \sX_{\Sigma(f)} \; \xrightarrow{\quad\quad} \; X_{\Sigma(f)} \; \xrightarrow{\quad\uppi_{\Sigma(f)}\quad} \; \AA^n
\]
desingularizes $V(f) \subset \AA^n$ above $\bzero \in \AA^n$ in the following sense:
\end{xpar}

\begin{definition}\label{D:stack-desingularization}
A {\sffamily stack-theoretic embedded desingularization of $V(f) \subset \AA^n$ above $\bzero \in \AA^n$} is a morphism $\Uppi \colon \sX \to \AA^n$ where: \begin{enumerate}
    \item $\sX$ is a smooth Artin stack over $\kk$ admitting a good moduli space $\sX \to \bX$, and the induced morphism $\uppi \colon \bX \to \AA^n$ is proper and birational.
    \item $\Uppi^{-1}\bigl(V(f)\bigr)$ is a simple normal crossings divisor at \ChMingHao{every point} in $\Uppi^{-1}(\bzero)$ (in the stack-theoretic sense, cf. \cite[Definition 3.1]{bergh-rydh-destackification}).
\end{enumerate}
\end{definition}

\begin{xpar}\label{X:approach-4}
In \S\ref{3.1} we also discuss a motivic change of variables for $Z_{\mot,\bzero}(f;s)$ that is applicable to $\Uppi_{\Sigma(f)}$, although indirectly. By this we mean that one has to first take a \emph{simplicial} subdivision $\pmb{\Upsigma}(f)$ of $\Sigma(f)$ \emph{without adding new rays}. The effect of doing so is that the corresponding toric stack $\sX_{\pmb{\Upsigma}(f)}$ is Deligne--Mumford, and the morphism $\Uppi_{\pmb{\Upsigma}(f)} \colon \sX_{\pmb{\Upsigma}(f)} \to \AA^n$ factors through $\Uppi_{\Sigma(f)} \colon \sX_{\Sigma(f)} \to \AA^n$ as an open substack, i.e. $\Uppi_{\pmb{\Upsigma}(f)}$ also desingularizes $V(f) \subset \AA^n$ above $\bzero \in \AA^n$. Finally we compute the set of numerical data associated to $(f,\Uppi_{\pmb{\Upsigma}(f)})$ (in the sense of Definition~\ref{D:numerical-datum} below), and show that it is the set $\upeta(f)$ in \eqref{EQ:numerical-data}. Applying the aforementioned motivic change of variables to $\Uppi_{\pmb{\Upsigma}(f)}$, the preceding sentence then implies \eqref{EQ:candidate-poles}.
\end{xpar}

\begin{definition}\label{D:numerical-datum}
Let $\Uppi \colon \sX \to \AA^n$ be a stack-theoretic embedded desingularization of $V(f) \subset \AA^n$ above $\bzero \in \AA^n$, such that $\sX$ is a Deligne--Mumford stack. Let $\{E_i \colon i \in I\}$ denote the set of irreducible components of $\Uppi^{-1}\bigl(V(f)\bigr)$. For each $i \in I$, let $N_i$ \ChMingHao{(resp. $\nu_i-1$)} denote the multiplicity of $E_i$ in the divisor $\Uppi^{-1}\bigl(V(f)\bigr)$ \ChMingHao{(resp. the relative canonical divisor $K_\Uppi$ of $\Uppi$)}. Then the {\sffamily set of numerical data} associated to the pair $(f,\Uppi)$ is: \[
    \upeta(f,\Uppi) \; := \; \bigl\{(N_i,\nu_i) \colon i \in I\bigr\}
\]
where each $(N_i,\nu_i)$ is referred to as the {\sffamily numerical datum} of the corresponding irreducible component $E_i \subset \Uppi^{-1}\bigl(V(f)\bigr)$.
\end{definition}

Similar to how the motivic change of variables in \ref{X:approach-4} reduces \eqref{EQ:candidate-poles-0} to the existence of a stack-theoretic desingularization of $V(f) \subset \AA^n$ above $\bzero \in \AA^n$ \ChMingHao{by a Deligne--Mumford stack}, whose set of numerical data equal to $\upeta(f)$, that same change of variables would also reduce Theorem~\ref{T:fake-poles} to the following:

\begin{thmx}[$\impliedby$Theorem~\ref{T:desingularization-new}]\label{T:refined-desingularization}
Given a set $\dB$ of $B_1$-facets of $\Upgamma(f)$ with consistent base directions, there exists a stack-theoretic embedded desingularization $\Uppi \colon \sX \to \AA^n$ of $V(f) \subset \AA^n$ above $\bzero \in \AA^n$, such that $\sX$ is a Deligne--Mumford stack, and whose set of numerical data is: \[
    \upeta^{\dag,\dB}(f) \; := \; \{(1,1)\} \; \cup \; \bigl\{\upeta_\tau \colon \tau \prec^1 \Upgamma(f) \textrm{ with } N_\tau > 0 \textrm{ and } \tau \notin \dB\bigr\}.
\]
\end{thmx}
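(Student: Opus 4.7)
The plan is to construct $\Uppi \colon \sX \to \AA^n$ as the toric Deligne--Mumford stack associated to a simplicial fan $\pmb{\Upsigma}^\dag$ obtained from a simplicial subdivision $\pmb{\Upsigma}$ of the normal fan $\Sigma(f)$ by combinatorially deleting exactly those rays corresponding to the facets $\tau \in \dB$.

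Starting from the default construction in \ref{X:approach-4}, I would first choose a simplicial subdivision $\pmb{\Upsigma}$ of $\Sigma(f)$ that adds no new rays, so that $\Uppi_{\pmb{\Upsigma}} \colon \sX_{\pmb{\Upsigma}} \to \AA^n$ is a stack-theoretic embedded desingularization whose irreducible components of $\Uppi_{\pmb{\Upsigma}}^{-1}(V(f))$ correspond bijectively to rays of $\pmb{\Upsigma}$, with those coming from facets of $\Upgamma_+(f)$ carrying numerical data $\upeta(f)$. For each $B_1$-facet $\tau \in \dB$ with chosen base direction $b(\tau) = i$, the $B_1$-condition has two key consequences: combinatorially, $u_{\tau,i} > 0$ because $\tau$ is compact in the $i$-th direction; and geometrically, the face polynomial decomposes as $f_\tau = x_i \cdot g_\tau + h_\tau$ where $g_\tau$ and $h_\tau$ do not involve $x_i$, since the apex $\bv$ has $v_i = 1$ and every other vertex lies in $H_i$.

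Next I would form $\pmb{\Upsigma}^\dag$ by deleting, for each $\tau \in \dB$, the ray generated by $\bu_\tau$, and enlarging every maximal cone that contained it by absorbing $\bu_\tau$ into the neighbouring ray $\be_i$. The consistent-base-direction hypothesis is precisely the combinatorial condition ensuring that these local replacements patch together into a globally well-defined simplicial fan: whenever two $B_1$-facets in $\dB$ contribute overlapping modifications (which happens exactly when they share a sub-cone of $\pmb{\Upsigma}$ along which one would replace $\bu_\tau$), the chosen replacement $\be_i$ must agree, and consistency of base directions guarantees precisely this.

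The main obstacle is then to verify that $\Uppi^\dag \colon \sX_{\pmb{\Upsigma}^\dag} \to \AA^n$ remains a stack-theoretic embedded desingularization of $V(f) \subset \AA^n$ above $\bzero$ in the sense of Definition~\ref{D:stack-desingularization}. Simpliciality of $\pmb{\Upsigma}^\dag$ yields the Deligne--Mumford property automatically, but the stack-theoretic simple normal crossings condition on $(\Uppi^\dag)^{-1}(V(f))$ requires a careful, cone-by-cone local analysis: one must check that the strict transform of $V(f)$ meets the remaining toric boundary transversely after $\bu_\tau$ is absorbed into $\be_i$. This is where both the non-degeneracy of $f$ and the $B_1$-decomposition $f_\tau = x_i g_\tau + h_\tau$ enter essentially — the geometric content of the latter is that the would-be exceptional divisor $\uE_\tau$ is ``invisibly merged'' into the strict transform of $V(x_i)$, so normal crossings are preserved. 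Once these verifications are complete, the set of numerical data of $\Uppi^\dag$ is by construction obtained from $\upeta(f)$ by removing exactly $\{\upeta_\tau : \tau \in \dB\}$, yielding $\upeta^{\dag,\dB}(f)$.
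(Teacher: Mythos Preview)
Your high-level strategy --- delete the rays $\rho_\tau$ for $\tau \in \dB$ and verify that the resulting toric Deligne--Mumford stack still desingularizes $V(f)$ above $\bzero$ --- matches the paper's, but your implementation of both halves has genuine gaps. First, the fan construction: ``delete $\bu_\tau$ and absorb it into $\be_{b(\tau)}$'' is not a well-defined operation on a simplicial fan. Removing a ray from a simplicial fan leaves a hole whose re-triangulation is not canonical, and your prescription does not specify one; worse, you claim that consistency of base directions is what makes the patching work, but this is not where that hypothesis enters. The paper sidesteps this entirely by working on the polyhedron side: it forms the $\dB$-cut $\Upgamma_+^\dag := \bigcap\{H_{\bu_\tau,N_\tau}^+ : \tau \prec^1 \Upgamma_+(f),\ \tau \notin \dB\}$, whose normal fan $\Sigma^\dag$ is automatically a fan with ray set exactly $\Sigma(f)[1] \smallsetminus \Sigma[\dB]$, and only \emph{then} takes a frugal simplicial subdivision. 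This construction is valid for \emph{any} set $\dB$ of facets; consistency plays no role here.

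Second, the simple-normal-crossings verification for the ``new'' cones of $\Sigma^\dag$ (those not inscribable in any cone of $\Sigma(f)$) requires substantially more than the decomposition $f_\tau = x_i g_\tau + h_\tau$. The paper shows (Proposition~\ref{P:new-cone-II}) that every new cone $\sigma$ must contain $\RR_{\geq 0}\be_{b(\intercal)}$ as an extremal ray for some equivalence class $\intercal \in \dB_{/\sim}$, and that the residual cone $\sigma^\circ = \sigma \smallsetminus \{\RR_{\geq 0}\be_{b(\intercal)}\}$ is inscribed in the cone dual to the apex $\bv_\intercal$. The crucial and delicate step is Proposition~\ref{P:new-cone-dual-to-compact}: when $\sigma$ is dual to a compact face of $\Upgamma_+^\dag$, the face $\underline{\varsigma}^\circ = \bigcap\{\tau_\rho : \rho \in \sigma^\circ[1]\}$ of $\Upgamma_+(f)$ is either $\{\bv_\intercal\}$ or a one-dimensional face whose only lattice point with $b(\intercal)$-th coordinate equal to $1$ is $\bv_\intercal$ itself. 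This dimension bound --- proved by a geometric argument comparing $\Upgamma_+^\dag$ with the intermediate $(\dB \smallsetminus \intercal)$-cut and using that line segments from $\varsigma^\circ \smallsetminus \Upgamma_+^\ddag$ to $\varsigma^\circ \cap \Upgamma_+^\ddag$ must pass through $\bv_\intercal$ --- is what forces $\partial f'/\partial x_{b(\intercal)}'$ to be a unit on the relevant orbit, and it is precisely here that both the $B_1$-structure and the consistency of base directions are consumed. Your sketch does not anticipate this step.
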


\begin{xpar}\label{X:approach-5}
Our proof of Theorem~\ref{T:refined-desingularization} occupies the entirety of \S\ref{4}. As one might expect from the discussion in \ref{X:approach-1} and \ref{X:approach-3}, the proof should involve the construction of a fan $\Sigma^\dag$ that subdivides $N_\RR^+$ and satisfies the following: \begin{enumerate}
    \item The set of rays in $\Sigma^\dag$ comprises of rays in $\Sigma(f)$ \emph{except} those that are dual to facets in $\dB$.
    \item The induced toric modification $\Uppi_{\Sigma^\dag} \colon \sX_{\Sigma^\dag} \to \AA^n$ is a stack-theoretic embedded desingularization of $V(f) \subset \AA^n$ above $\bzero \in \AA^n$.
\end{enumerate}
In the first two paragraphs of \S\ref{3.2}, we give a brief sketch as to how one could accomplish this construction, and in \S\ref{4.1} and \S\ref{4.2}, we provide the details of the construction. In addition, in \S\ref{3.2} we also verify our methods for three non-degenerate polynomials in $n=3$ variables. We hope to highlight, through these examples, various aspects of Theorems~\ref{T:fake-poles} and \ref{T:refined-desingularization}. 
\end{xpar}

%To prove Theorem~\ref{T:refined-desingularization} we first construct in \S\ref{4.1} a Newton $\QQ$-polyhedron $\Upgamma^\dag$ by ``\emph{dropping the facets in $\dB$}'' from $\Upgamma(f)$. In \S\ref{4.1} and \S\ref{4.2} we study properties of both $\Upgamma^\dag$ and the cones in the normal fan $\Sigma^\dag$ of $\Upgamma^\dag$, which would be instrumental in the proof of Theorem~\ref{T:refined-desingularization}.

\begin{xpar}\label{X:future}
Finally, we indicate in \S\ref{5} the various aspects in which Theorem~\ref{T:fake-poles} is incomplete for the motivic monodromy conjecture for non-degenerate polynomials (\ref{X:motivic-monodromy-conjecture}), most of which we are pursuing separately in a sequel, using methods that are motivated by and similar to the ones in this paper. 

Nevertheless, Theorem~\ref{T:fake-poles} in particular recovers the motivic monodromy conjecture for non-degenerate polynomials in $n=3$ variables, which was proven previously by Bories--Veys \cite[Theorem 10.3]{bories-veys-monodromy-conjecture}, although (as hinted in \ref{X:new-contribution}) via an approach different from Theorem~\ref{T:refined-desingularization}. Indeed, in \S\ref{5.1}, we first show that Theorem~\ref{T:fake-poles} implies:
\end{xpar}

\begin{thmx}[$=$ Theorem~\ref{T:fake-poles-n=3:compatible}]\label{T:fake-poles-n=3}
Let $n=3$, and let $\cS_\circ \subset \Uptheta(f) \smallsetminus \{-1\}$. If $\cF(f;s_\circ)$ is a set of $B_1$-facets of $\Upgamma(f)$ with consistent base directions for each $s_\circ \in \cS_\circ$, then $\Uptheta(f) \smallsetminus \cS_\circ$ is a set of candidate poles for $Z_{\mot,\bzero}(f;s)$.
\end{thmx}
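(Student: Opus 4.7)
The strategy is to reduce Theorem~\ref{T:fake-poles-n=3} to Theorem~\ref{T:fake-poles} by applying the latter to the union
\[
\dB \; := \; \bigcup_{s_\circ \in \cS_\circ} \cF(f;s_\circ).
\]
Once I have verified that $\dB$ admits a system of consistent base directions, Theorem~\ref{T:fake-poles} directly yields $\Uptheta^{\dag,\dB}(f) = \Uptheta(f) \smallsetminus \cS_\circ$ as a set of candidate poles for $Z_{\mot,\bzero}(f;s)$, as desired. The proof therefore reduces to the combinatorial task of propagating the within-slope consistency granted by the hypothesis across slope boundaries in the adjacency graph on $\dB$.

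The three-dimensional setting provides two key inputs. First, a $B_1$-facet $\tau$ of $\Upgamma_+(f)$ in $n = 3$ is necessarily a triangle, because its non-apex vertices all lie in the $1$-dimensional edge $H_i \cap \tau$. Second, a single such triangle may admit several valid apex-with-base-direction pairs, so there is genuine flexibility in assigning a base direction $b(\tau) \in [3]$. For each $\tau \in \dB$, I would encode this flexibility by the set $I(\tau) \subset [3]$ of valid base directions, so that choosing consistent base directions for $\dB$ amounts to selecting $b(\tau) \in I(\tau)$ with $b(\tau_1) = b(\tau_2)$ for every adjacent pair $\tau_1 \frown \tau_2$ in $\dB$.

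The main combinatorial claim I would establish is: for every pair of adjacent $B_1$-facets $\tau_1, \tau_2$ of $\Upgamma_+(f)$ in $n = 3$, one has $I(\tau_1) \cap I(\tau_2) \neq \varnothing$. Granted this, within each connected component of the adjacency graph on $\dB$, a common base direction can be extracted by starting from the consistent assignments given on each $\cF(f;s_\circ)$ and patching them across slope boundaries using the claim. The proof of the claim proceeds by a case analysis on the shared edge $e = \tau_1 \cap \tau_2$. The decisive case is when $e$ is the base edge of both triangles with distinct putative base directions $i_1 \neq i_2$: a primitive-normal computation then forces $e$ into the coordinate axis $H_{i_1} \cap H_{i_2}$, and the resulting constraints on the apexes of $\tau_1, \tau_2$ violate the requirement that each $\bu_{\tau_j}$ have strictly positive $i_j$-th coordinate (coming from condition~(c) of Definition~\ref{D:B1-facets}), forcing $i_1 = i_2$. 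The remaining edge-sharing configurations are handled by analogous coordinate computations.

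The hard part is this primitive-normal case analysis: it depends delicately on the low-dimensional interplay between coordinate hyperplanes, the compactness condition~(c), and the positivity of the facet normals --- features that are all specific to $n = 3$ and would fail in higher dimensions.
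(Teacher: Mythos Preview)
Your overall strategy --- apply Theorem~\ref{T:fake-poles} to the union $\dB = \bigcup_{s_\circ \in \cS_\circ}\cF(f;s_\circ)$ --- matches the paper's, but your main combinatorial claim is false, and this breaks the argument. You assert that for \emph{every} pair of adjacent $B_1$-facets $\tau_1,\tau_2$ of $\Upgamma_+(f)$ in $n=3$ one has $I(\tau_1)\cap I(\tau_2)\neq\varnothing$. Example~\ref{EX:x^2+yz} ($f=x_1^2+x_2x_3$) is a counterexample: the two facets $\tau_1,\tau_2$ are adjacent $B_1$-facets with $I(\tau_1)=\{2\}$ and $I(\tau_2)=\{3\}$. (Incidentally, these facets are non-compact half-strips, not triangles; $B_1$-facets need not be compact.) This example does not violate the hypothesis of Theorem~\ref{T:fake-poles-n=3} --- both facets have the same slope, so their set $\cF(f;-\tfrac32)$ would have to be assumed consistent and it is not --- but it does falsify your claim as stated, and your ``analogous coordinate computations'' for the non-decisive cases cannot succeed.

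Even if you weakened the claim to adjacent pairs lying in $\dB$, pairwise non-emptiness of $I(\tau_1)\cap I(\tau_2)$ does not by itself yield a global consistent choice; you would need further structure to avoid a chain like $I(\tau_1)=\{1\}$, $I(\tau_2)=\{1,2\}$, $I(\tau_3)=\{2\}$. The paper supplies exactly this missing structure, in two steps. First (Lemma~\ref{L:union-compatible}), a primitive-normal computation shows that two adjacent $B_1$-facets sharing the \emph{same apex} but assigned \emph{different} base directions must have equal slope; hence cross-slope conflicts only arise between facets with distinct apices. Second (Lemma~\ref{L:compatible-vs-consistent-n=3}), when adjacent $\tau_1,\tau_2\in\dB$ have distinct apices $\bv_{\tau_1}\neq\bv_{\tau_2}$ and different base directions, an $n=3$--specific argument shows that one of them, say $\tau_2$, is a \emph{leaf} in the adjacency graph on $\dB$ and admits $\bv_{\tau_1}$ as an alternative apex with base direction $b(\tau_1)$; reassigning $\tau_2$ then creates no new conflicts. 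This leaf property is the key geometric fact you are missing.
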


Note that by specializing $Z_{\mot,\bzero}(f;s)$ to $Z_{\tg,\bzero}(f;s)$ (cf. Remark~ \ref{R:topological-zeta-function}), Theorem~\ref{T:fake-poles-n=3} in particular recovers \cite[Proposition 14]{lemahieu-van-proeyen-nondegenerate-surface-singularities}. Moreover, the authors in loc. cit. showed that $s_\circ \in \Uptheta(f) \smallsetminus \{-1\}$ induces a monodromy eigenvalue of $f$ near $\bzero \in \CC^n$ (in the sense indicated in \ref{X:motivic-monodromy-conjecture}) whenever $\cF(f;s_\circ)$ satisfies either of the following hypotheses: \begin{enumerate}
    \item $\cF(f;s_\circ)$ contains a \emph{non}--$B_1$-facet of $\Upgamma(f)$ \cite[Theorem 10]{lemahieu-van-proeyen-nondegenerate-surface-singularities}.
    \item $\cF(f;s_\circ)$ is a set of $B_1$-facets of $\Upgamma(f)$, but \emph{without} consistent base directions \cite[Theorem 15]{lemahieu-van-proeyen-nondegenerate-surface-singularities}.
\end{enumerate}
Therefore, we conclude from Theorem~\ref{T:fake-poles-n=3} and the preceding sentence that:

\begin{corx}\label{C:motivic-nondegenerate-monodromy-conjecture-n=3}
The motivic monodromy conjecture holds for non-degenerate polynomials in $n = 3$ variables.
\end{corx}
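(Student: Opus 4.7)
The plan is to combine Theorem~\ref{T:fake-poles-n=3} with the monodromy eigenvalue results of Lemahieu--Van Proeyen to eliminate every remaining candidate pole. First I would choose the largest admissible set
\[
    \cS_\circ \; := \; \bigl\{s_\circ \in \Uptheta(f) \smallsetminus \{-1\} \; \colon \; \cF(f;s_\circ) \text{ is a set of $B_1$-facets with consistent base directions}\bigr\},
\]
and invoke Theorem~\ref{T:fake-poles-n=3} to conclude that $\Uptheta(f) \smallsetminus \cS_\circ$ is a set of candidate poles for $Z_{\mot,\bzero}(f;s)$. It then suffices to show that every $s_\circ \in \Uptheta(f) \smallsetminus \cS_\circ$ induces a monodromy eigenvalue of $f$ near $\bzero \in \CC^3$ in the sense of~\ref{X:motivic-monodromy-conjecture}.

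Next, for any $s_\circ \in \Uptheta(f) \smallsetminus (\{-1\} \cup \cS_\circ)$, the definition of $\cS_\circ$ forces $\cF(f;s_\circ)$ to fall into one of the dichotomy cases (i) or (ii) recalled above. Applying \cite[Theorem 10]{lemahieu-van-proeyen-nondegenerate-surface-singularities} in case (i) and \cite[Theorem 15]{lemahieu-van-proeyen-nondegenerate-surface-singularities} in case (ii) immediately produces the required monodromy eigenvalue $\exp(2\pi\sqrt{-1}\,s_\circ)$.

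It remains to handle the universal candidate pole $s_\circ = -1$, which corresponds to the eigenvalue $\exp(-2\pi\sqrt{-1}) = 1$. For this I would appeal to the classical fact that at any smooth point $x$ of $V(f) \subset \CC^3$ the Milnor fiber $F_{f,x}$ is contractible, so the monodromy transformation acts trivially on $H^0_\sing(F_{f,x},\ZZ) = \ZZ$. Since $V(f)$ is a hypersurface, its smooth locus meets every neighbourhood $U$ of $\bzero$ in $f^{-1}(0)$, so such an $x \in U$ always exists and witnesses $1$ as a monodromy eigenvalue near $\bzero$.

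I do not anticipate a genuine obstacle: the substantive content is already packaged into Theorem~\ref{T:fake-poles-n=3} and into the two dichotomy theorems of Lemahieu--Van Proeyen. The only minor point requiring attention is the tautological verification that the above choice of $\cS_\circ$ exhausts exactly those $s_\circ \in \Uptheta(f) \smallsetminus \{-1\}$ for which neither (i) nor (ii) applies, so that the three cases partition $\Uptheta(f)$ into the portion removed by Theorem~\ref{T:fake-poles-n=3}, the portion resolved by \cite{lemahieu-van-proeyen-nondegenerate-surface-singularities}, and the single point $-1$ handled by the smooth-point argument.
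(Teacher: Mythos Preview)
Your proposal is correct and follows essentially the same route as the paper: define $\cS_\circ$ to be all $s_\circ \in \Uptheta(f)\smallsetminus\{-1\}$ for which $\cF(f;s_\circ)$ consists of $B_1$-facets with consistent base directions, apply Theorem~\ref{T:fake-poles-n=3} to discard $\cS_\circ$, and invoke \cite[Theorems 10 and 15]{lemahieu-van-proeyen-nondegenerate-surface-singularities} on the complement. The paper leaves the case $s_\circ=-1$ implicit (it is classical that $1$ is always a monodromy eigenvalue, since the Milnor fiber at any $x\in f^{-1}(0)$ is non-empty and monodromy permutes its components); your explicit treatment via smooth points is fine, though note that non-degeneracy does not force $f$ to be reduced, so strictly speaking you should argue on $H^0$ of the Milnor fiber at $\bzero$ itself rather than rely on contractibility at a smooth point.
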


\subsection{Acknowledgements}

This research work was supported in part by funds from BSF grant 2018193 and NSF grant DMS-2100548. 

The author first conceived the idea behind this paper during the final weeks of his time ($8$\textsuperscript{th} September -- $6$\textsuperscript{th} December 2021) at the Moduli and Algebraic Cycles program in Institut Mittag-Leffler, where he was also supported by the Swedish Research Council grant no. 2016-06596. Therefore he would like to thank John Christian Ottem, Dan Petersen, and David Rydh for organizing the program and giving him the opportunity to participate in it, for without the opportunity he would not have the privilege to meet and discuss mathematics with Johannes Nicaise. He is grateful to \ChMingHao{the referee, Nero Budur, Matt Larson}, Johannes Nicaise, Sam Payne, Matthew Satriano, Jeremy Usatine \ChMingHao{and Willem Veys} for their time, suggestions and questions, and his advisor Dan Abramovich for his guidance, patience and wisdom. Finally, the author also thanks Mihnea Popa for his course on Hodge ideals in Spring 2021, where the author first learnt about the monodromy conjecture among other topics.

\setcounter{tocdepth}{1}
\tableofcontents

\newpage
\section{Nuts and bolts}

\subsection{Newton \texorpdfstring{$\QQ$}{Q}-polyhedra and piecewise-linear convex \texorpdfstring{$\QQ$}{Q}-functions}\label{2.1}

We begin by reviewing some fundamentals in convex geometry in \S\ref{2.1} and \S\ref{2.2}. \ChMingHao{A reader who is familiar with convex geometry can skip to \S\ref{2.3}. Along the way we also fix conventions and notations for the remainder of the paper.}

\begin{definition}[Newton $\QQ$-polyhedra]\label{D:newton-Q-polyhedra}
By a {\sffamily rational, positive half-space} in $M_\RR^+$, we mean any set of the form \[
    H_{\bu,m}^+ \; := \; \bigl\{\ba \in M_\RR^+ \colon \ba \cdot \bu \geq m \bigr\} \; \subset \; M_\RR^+
\]
for some $\bzero \neq \bu \in N^+$ and $m \in \ChMingHao{\ZZ_{>0}}$. We also set: \[
    H_{\bu,m} \; := \; \bigl\{\ba \in M_\RR^+ \colon \ba \cdot \bu = m \bigr\} \; \subset \; M_\RR^+.
\]
We call an intersection of finitely many rational, positive half-spaces in $M_\RR^+$ a {\sffamily Newton $\QQ$-polyhedron} (with the empty intersection defined as $M_\RR^+$), typically denoted by the letter $\Upgamma$. Equivalently, a Newton $\QQ$-polyhedron is the convex hull in $M_\RR$ of $\bigcup\{\ba + M_\RR^+ \colon \ba \in S\}$ for a finite subset of points $S \subset M_\QQ^+$.
\end{definition}

\begin{remark}\label{R:newton-Q-polyhedra}
If the vertices of a Newton $\QQ$-polyhedron $\Upgamma$ also lie in $M^+$, then $\Upgamma$ is simply referred to as a Newton polyhedron.
\end{remark}

\begin{xpar}[Conventions on Newton $\QQ$-polyhedra]\label{X:convention-2.1.A}
In \ref{X:convention-1.2-A}, we outlined a few conventions on the Newton polyhedron $\Upgamma(f)$ of a polynomial $f \in \kk[x_1,\dotsc,x_n]$. The same conventions make sense for a Newton $\QQ$-polyhedron, and moving ahead we will also adopt them for Newton $\QQ$-polyhedra.
\end{xpar}

\begin{xpar}[Piecewise-linear convex $\QQ$-functions]\label{X:piecewise-linear-convex-function}
To every Newton $\QQ$-polyhedron $\Upgamma$, we can associate a piecewise-linear, convex function $\varphi \colon N_\RR^+ \to \RR_{\geq 0}$ as follows: \[
    \varphi(\bu) \; := \; \inf_{\ba \in \Upgamma}{\ba \cdot \bu} \qquad \textrm{ for every $\bu \in N_\RR^+$}.
\]
This means that there exists a finite set $S \subset M_\RR^+$ so that $\varphi(\bu) = \min_{\ba \in S}{\ba \cdot \bu}$ for every $\bu \in N_\RR^+$. Indeed, for the above $\varphi$, we may take $S = \vertex(\Upgamma)$. In fact, since $\vertex(\Upgamma) \subset M_\QQ^+$, $\varphi$ is a {\sffamily piecewise-linear, convex $\QQ$-function}, that is, either of the following equivalent conditions hold for $\varphi$: \begin{enumerate}
    \item $\varphi$ is a piecewise-linear, convex function such that $\varphi(N^+) \subset \QQ_{\geq 0}$.
    \item There exists a finite set $S \subset M_\QQ^+$ such that $\varphi(\bu) = \min_{\ba \in S}{\ba \cdot \bu}$.
\end{enumerate}
This sets up a one-to-one correspondence between: \[
    \bigl\{\textrm{Newton $\QQ$-polyhedra in $M_\RR^+$}\bigr\} \; \xleftrightarrow{\quad\quad} \; \left\{\parbox{4.6cm}{piecewise linear, convex, $\QQ$-functions $\varphi \colon N_\RR^+ \to \RR_{\geq 0}$}\right\}.
\]
Indeed, every $\varphi$ in the right hand side arises uniquely from the following Newton $\QQ$-polyhedron: \begin{equation}\label{EQ:associated-newton-Q-polyhedron}
    \Upgamma \; = \; \bigl\{\ba \in M_\RR^+ \colon \ba \cdot \bu \geq \varphi(\bu) \textrm{ for all } \bu \in N_\RR^+\bigr\} \; = \; \bigcap_{\bu \in N_\RR^+}{H_{\bu,\varphi(\bu)}^+}.
\end{equation}
\ChMingHao{This claim is immediate once we show that $\Upgamma$ is a Newton $\QQ$-polyhedron. Indeed, let $S = \{\ba_1,\dotsc,\ba_r\} \subset M_\QQ^+$ be such that $\varphi(\bu) = \min_{i \in [r]}{\ba_i \cdot \bu}$ for every $\bu \in N_\RR^+$. Then $\Upgamma$ is the intersection of all rational, positive half-spaces in $M_\RR^+$ containing $S$, i.e.} $\Upgamma$ is the convex hull in $M_\RR$ of $\bigcup\{\ba + M_\RR^+ \colon \ba \in S\}$. Since $S$ is finite, so is $\vertex(\Upgamma) \subset S$. Thus, $\Upgamma$ has finitely many faces. For each facet $\tau$ of $\Upgamma$, let $\bu_\tau$ be the unique primitive vector in $N^+$ that is normal to the affine hyperplane spanned by $\tau$. Then $\Upgamma$ is the following finite intersection of rational, positive half-spaces in $M_\RR^+$: \begin{equation}\label{EQ:newton-Q-polyhedron-as-finite-intersection}
    \Upgamma \; = \; \bigcap_{\tau \prec^1 \Upgamma}{H_{\bu_\tau,\varphi(\bu_\tau)}^+}.
\end{equation}
\end{xpar}

\begin{xpar}\label{X:alt-characterization}
\ChMingHao{By \eqref{EQ:newton-Q-polyhedron-as-finite-intersection}}, we obtain the following alternative description of $\varphi$ in terms of facets of $\Upgamma$ (as opposed to points in $\Upgamma$): \[
    \varphi \; = \; \min\sS
\]
where \[
    \sS \; := \; \left\{\parbox{7cm}{linear functions $\ell \colon N_\RR^+ \to \RR_{\geq 0}$ such that $\ell(\bu_\tau) \geq N_\tau$ for every facet $\tau \prec^1 \Upgamma$}\right\}.
\]
Recall from \ref{X:convention-2.1.A} that for every $\tau \prec^1 \Upgamma$, $N_\tau \in \QQ_{>0}$ is defined via the equation $\bigl\{\ba \in M_\RR \colon \ba \cdot \bu_\tau = N_\tau\bigr\}$ of the affine span $H_\tau$ of $\tau$ in $M_\RR$. Note that since $H_\tau = H_{\bu_\tau,\varphi(\bu_\tau)}$, we have $\varphi(\bu_\tau) = N_\tau$.
\end{xpar}

\subsection{Newton \texorpdfstring{$\QQ$}{Q}-polyhedra and their normal fans}\label{2.2}

\begin{xpar}[Conventions on fans]\label{X:convention-2.2-A}
Let $\Sigma$ be a fan in $N_\RR$. For $0 \leq d \leq n$, let $\Sigma[d]$ denote the set of $d$-dimensional cones $\sigma$ in $\Sigma$. In particular, $\Sigma[1]$ is the set of rays in $\Sigma$, and $\Sigma[n]$ is the set of full-dimensional cones in $\Sigma$. We also denote by $\Sigma[\max]$ the set of maximal cones in $\Sigma$, and denote by $\abs{\Sigma}$ the support of $\Sigma$, i.e. $\abs{\Sigma} = \bigcup\{\sigma \colon \sigma \in \Sigma\}$. In this paper we usually consider fans $\Sigma$ in $N_\RR^+$ satisfying $\abs{\Sigma} = N_\RR^+$, in which case $\Sigma[\max] = \Sigma[n]$. 

We also usually use the letter $\rho$ for rays in $\Sigma$ instead of $\sigma$ (akin to how we use a different letter $\tau$ for facets of $\Upgamma(f)$ instead of $\varsigma$, cf. \ref{X:convention-1.2-A}), and we let $\bu_\rho = (u_{\rho,i})_{i=1}^n$ denote the first lattice point on a ray $\rho$ in $\Sigma$, i.e. the unique primitive generator in $N^+$ of $\rho$. In addition, given two \ChMingHao{convex rational polyhedral cones} $\sigma$ and $\sigma'$ in $N_\RR$, we write $\sigma' \prec \sigma$ if $\sigma'$ is a face of $\sigma$. We also write $\sigma[d]$ for the set of $d$-dimensional faces $\sigma' \prec \sigma$, and write $\dim(\sigma)$ for the dimension of $\sigma$. \ChMingHao{Finally, for $S \subset N_\RR$, we write $\langle S \rangle$ for the cone in $N_\RR^+$ generated by $S$.}
\end{xpar}

\begin{xpar}[Normal fans]\label{X:normal-fans}
Every Newton $\QQ$-polyhedron $\Upgamma$ also naturally induces a fan $\Sigma$ in $N_\RR^+$, called the {\sffamily normal fan} of $\Upgamma$, whose cones $\sigma$ correspond in an inclusion-reversing manner with faces $\varsigma \prec \Upgamma$. Namely, let $\varphi$ be the piecewise linear, convex, rational function associated to $\Upgamma$, and we define: \[
    \Sigma \; := \; \left\{\sigma_\ba \colon \ba \in M_\RR^+\right\}
\]
where for each $\ba \in \ChMingHao{\Upgamma}$, \[
    \sigma_\ba \; := \; \bigl\{\bu \in N_\RR^+ \colon \varphi(\bu) = \ba \cdot \bu\bigr\}.
\]
This is a closed convex cone in $N_\RR^+$. Indeed, if $\bu_1$, $\bu_2 \in \sigma_\ba$, then $\ba \cdot (\bu_1+\bu_2) = \ba \cdot \bu_1 + \ba \cdot \bu_2 = \varphi(\bu_1) + \varphi(\bu_2) \leq \varphi(\bu_1+\bu_2) \leq \ba \cdot (\bu_1+\bu_2)$, which forces equality throughout, i.e. $\bu_1 + \bu_2 \in \sigma_\ba$. In particular, we obtain an alternative characterization of $\sigma_\ba$:

\begin{corollary}\label{C:linearity-cone}
For $\ba \in \ChMingHao{\Upgamma}$, $\sigma_\ba$ is the largest closed convex cone in $N_\RR^+$ on which $\varphi$ is the linear function $\bu \mapsto \ba \cdot \bu$. \qed
\end{corollary}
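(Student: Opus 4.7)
The plan is essentially to unpack the definition of $\sigma_\ba$, since most of the content is already contained in the paragraph immediately preceding the statement. First I would verify the three properties of $\sigma_\ba$ that are not yet established: that it is (a) closed, (b) stable under nonnegative scaling (so a cone, since convexity has already been shown), and (c) that $\varphi$ agrees on it with the linear function $\bu \mapsto \ba \cdot \bu$. Item (c) is tautological from the definition $\sigma_\ba = \{\bu \in N_\RR^+ \colon \varphi(\bu) = \ba \cdot \bu\}$. For (a), $\varphi$ is continuous as a piecewise-linear function, so $\sigma_\ba$ is the intersection of the closed set $N_\RR^+$ with the preimage of $\{0\}$ under the continuous map $\bu \mapsto \varphi(\bu) - \ba \cdot \bu$. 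For (b), $\varphi$ is positively homogeneous (since $\varphi(\bu) = \min_{\ba' \in \vertex(\Upgamma_+)} \ba' \cdot \bu$), so if $\bu \in \sigma_\ba$ and $\lambda \geq 0$ then $\varphi(\lambda\bu) = \lambda\varphi(\bu) = \lambda(\ba \cdot \bu) = \ba \cdot (\lambda \bu)$.

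For maximality, the argument is immediate: if $\sigma \subseteq N_\RR^+$ is any closed convex cone on which $\varphi(\bu) = \ba \cdot \bu$ for every $\bu \in \sigma$, then every such $\bu$ satisfies the defining condition of $\sigma_\ba$, so $\sigma \subseteq \sigma_\ba$.

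I do not anticipate any real obstacle here; the corollary is a tautological repackaging of the definition combined with the convexity computation already carried out. The only mildly non-trivial ingredients are continuity and positive homogeneity of $\varphi$, both of which are standard consequences of its description as the pointwise minimum of the finitely many linear forms $\bu \mapsto \ba' \cdot \bu$ indexed by $\ba' \in \vertex(\Upgamma_+)$ established in \ref{X:piecewise-linear-convex-function}.
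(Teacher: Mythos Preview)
Your proposal is correct and matches the paper's approach: the corollary is marked with \qed precisely because it is an immediate repackaging of the definition together with the convexity computation just before it, and you have simply made explicit the closedness and positive-homogeneity checks that the paper leaves implicit.
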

\end{xpar}

\begin{xpar}\label{X:first-meet-locus}
Our next goal is to explicate $\sigma_\ba$ further; in particular, we will see that $\sigma_\ba$ is a convex rational polyhedral cone in $N_\RR^+$ . To do this, let us first introduce a notion dual to $\sigma_\ba$. Namely, for each $\bu \in N_\RR^+$, the {\sffamily first meet locus} of $\bu$ is defined as: \[
    \varsigma_\bu \; = \; \{\ba \in \ChMingHao{\Upgamma} \colon \varphi(\bu) = \ba \cdot \bu\} \; \prec \; \Upgamma.
\]
\ChMingHao{Note that $\varsigma_\bzero = \Upgamma$}. Here are some other observations about $\varsigma_\bu$: \begin{enumerate}
    \item \ChMingHao{For $\bzero \neq \bu \in N_\RR^+$, $\varsigma_\bu = H_{\bu,\varphi(\bu)} \cap \Upgamma$, i.e. $H_{\bu,\varphi(\bu)}$ is a supporting hyperplane of $\Upgamma$. Every proper face of $\Upgamma$ is $\varsigma_\bu$ for some $\bzero \neq \bu \in N_\RR^+$.}
    \item \ChMingHao{Every facet $\tau$ of $\Upgamma$ is $\varsigma_{\bu_\tau}$ for a unique primitive vector ${\bu_\tau} \in N^+$, namely the one that is normal to the affine span $H_\tau$ of $\tau$ in $M_\RR$.} 
    \item For each $i \in [n]$, the following statements are equivalent: \begin{enumerate}
    \item $\bu_i = 0$.
    \item $\varsigma_\bu$ is non-compact in the $i\textsuperscript{th}$ coordinate, i.e. $\varsigma_\bu + \RR_{\geq 0}\be_i^\vee = \varsigma_\bu$.
    \item There exists $\ba \in \varsigma_\bu$ such that $\ba + \be_i^\vee \in \varsigma_\bu$.
    \end{enumerate}
\end{enumerate}
In particular, (iii) says that $\varsigma_\bu$ is compact if and only if all coordinates of $\bu$ are non-zero. We will also need the following lemma:
\end{xpar}

\ChMingHao{\begin{lemma}\label{L:intersection-of-first-meet-loci-and-cones}\ \begin{enumerate}
    \item Let $\bu_1,\bu_2 \in N_\RR^+$. Then $\varsigma_{\bu_1} \cap \varsigma_{\bu_2} \subset \varsigma_{\bu_1+\bu_2}$, with equality if and only if $\varsigma_{\bu_1} \cap \varsigma_{\bu_2} \neq \varnothing$.
    \item Let $\ba, \ba' \in \Upgamma$. For every $0 < t < 1$, $\sigma_\ba \cap \sigma_{\ba'} = \sigma_{t\ba + (1-t)\ba'}$.
\end{enumerate}
\end{lemma}}

The next lemma is the key step towards explicating $\sigma_\ba$:

\begin{lemma}\label{L:extremal-rays}
For \ChMingHao{$\ba \in \Upgamma$} and $\bu \in N_\RR^+$, $\bu$ generates an extremal ray of $\sigma_\ba$ if and only if $\ba \in \varsigma_\bu \prec^1 \Upgamma$.
\end{lemma}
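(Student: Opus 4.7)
The plan is to exploit the symmetry in the defining equation $\varphi(\bu) = \ba \cdot \bu$, which gives the equivalence $\bu \in \sigma_\ba \iff \ba \in \varsigma_\bu$: the membership of $\ba$ in $\varsigma_\bu$ is common to both directions of the lemma, so what remains is to tie the extremality of $\RR_{\geq 0}\bu$ in $\sigma_\ba$ to the facet condition $\varsigma_\bu \prec^1 \Upgamma_+$.

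For the backward direction I would set $\tau := \varsigma_\bu$ and invoke observation~(ii) of~\ref{X:first-meet-locus} to write $\bu = \lambda \bu_\tau$ for some $\lambda > 0$. To verify that $\RR_{\geq 0} \bu_\tau$ is extremal in $\sigma_\ba$, take any decomposition $\bu = \bv_1 + \bv_2$ with $\bv_1, \bv_2 \in \sigma_\ba$. Since $\ba$ lies in $\varsigma_{\bv_1} \cap \varsigma_{\bv_2}$, Lemma~\ref{L:first-meet-locus-sum-intersection} identifies this intersection with $\varsigma_{\bv_1 + \bv_2} = \varsigma_\bu = \tau$, so each $\varsigma_{\bv_i}$ contains $\tau$. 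Since $\tau$ is a facet, the only faces of $\Upgamma_+$ containing $\tau$ are $\tau$ itself and $\Upgamma_+$, which by observations~(ii) and~(iii) of~\ref{X:first-meet-locus} forces each $\bv_i$ to be a non-negative multiple of $\bu_\tau$, giving $\bv_i \in \RR_{\geq 0} \bu$.

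For the forward direction I would argue by contrapositive: suppose $\bu \neq \bzero$ and $\varsigma_\bu$ is not a facet. From the presentation~\eqref{EQ:newton-Q-polyhedron-as-finite-intersection} of $\Upgamma_+$ as a finite intersection of half-spaces, every face of $\Upgamma_+$ is the intersection of the facets containing it; consequently $\varsigma_\bu$ lies inside at least two distinct facets $\tau_1 \neq \tau_2$, whose primitive normals $\bu_{\tau_1}, \bu_{\tau_2} \in N^+$ are linearly independent (two primitive positive vectors proportional to one another coincide). Both $\bu_{\tau_i}$ lie in the subcone $\sigma_{\varsigma_\bu} := \{\bu' \in N_\RR^+ \colon \varsigma_{\bu'} \supseteq \varsigma_\bu\}$, which is contained in $\sigma_\ba$ because $\ba \in \varsigma_\bu$; this forces $\dim \sigma_{\varsigma_\bu} \geq 2$. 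The vector $\bu$ sits in the relative interior of $\sigma_{\varsigma_\bu}$, since the proper faces of $\sigma_{\varsigma_\bu}$ are exactly the subcones $\sigma_{\varsigma'}$ with $\varsigma' \supsetneq \varsigma_\bu$, and $\varsigma_\bu$ is not strictly larger than itself. Consequently, for sufficiently small $\epsilon > 0$ both $\bu \pm \epsilon \bu_{\tau_1}$ remain in $\sigma_{\varsigma_\bu} \subseteq \sigma_\ba$, and neither is proportional to $\bu$ because $\bu_{\tau_1}$ is not (otherwise $\varsigma_{\bu_{\tau_1}} = \tau_1$ would coincide with $\varsigma_\bu$). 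The decomposition $\bu = \tfrac{1}{2}(\bu + \epsilon \bu_{\tau_1}) + \tfrac{1}{2}(\bu - \epsilon \bu_{\tau_1})$ then contradicts the extremality of $\RR_{\geq 0}\bu$.

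The single technical point I expect to need careful treatment is the identification of the relative interior of $\sigma_{\varsigma_\bu}$ with $\{\bu' \colon \varsigma_{\bu'} = \varsigma_\bu\}$; this rests on the standard fact (for the normal fan of a full-dimensional polyhedron) that the face lattice of $\sigma_{\varsigma_\bu}$ is indexed, via $\varsigma' \mapsto \sigma_{\varsigma'}$, by the faces $\varsigma' \supseteq \varsigma_\bu$ of $\Upgamma_+$. Apart from this, the argument funnels entirely through Lemma~\ref{L:first-meet-locus-sum-intersection} and the representation of each face of $\Upgamma_+$ as an intersection of facets.
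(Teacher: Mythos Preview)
Your backward direction is essentially the paper's argument: both use Lemma~\ref{L:first-meet-locus-sum-intersection} to force $\varsigma_{\bv_i} = \tau$ and then conclude from the hyperplane normal being unique up to scale.

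Your forward direction differs from the paper's and, within this paper's logical order, has a circularity issue. You argue by contrapositive, placing $\bu$ in the relative interior of the normal cone $\sigma_{\varsigma_\bu}$ and perturbing by $\pm\epsilon\bu_{\tau_1}$. The step you flag as needing care --- identifying $\relint(\sigma_{\varsigma_\bu})$ with $\{\bu' : \varsigma_{\bu'} = \varsigma_\bu\}$, equivalently that the proper faces of $\sigma_{\varsigma_\bu}$ are the cones $\sigma_{\varsigma'}$ for $\varsigma' \supsetneq \varsigma_\bu$ --- is exactly the cone--face correspondence the paper establishes in Corollary~\ref{C:structure-of-cones} and Corollary~\ref{C:cone-face-correspondence}, both of which are \emph{derived from} Lemma~\ref{L:extremal-rays}. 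So invoking it here as a ``standard fact'' short-circuits the paper's development. If you are willing to import normal-fan basics from an outside reference your argument is fine, but inside \S\ref{2.2} it is circular.

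The paper avoids this by a direct argument: to show $\varsigma_\bu$ is maximal among proper faces containing $\ba$, take any larger face $\varsigma = \varsigma_{\bu'}$ with $\bu' \in \sigma_\ba$, and show that $N\bu - \bu' \in \sigma_\ba$ for $N \gg 0$ via a finiteness-of-vertices / limiting argument (if not, a constant subsequence of minimizing vertices $\ba'$ would satisfy $\ba' \cdot \bu = \ba \cdot \bu$ and $\ba' \cdot \bu' = \ba \cdot \bu'$, contradicting the strict inequality coming from $N\bu - \bu' \notin \sigma_\ba$). Extremality then forces $\bu' \in \RR_{\geq 0}\bu$, hence $\varsigma = \varsigma_\bu$. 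This is more hands-on than your approach but is self-contained at this point in the exposition.
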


\begin{proof}
\ChMingHao{We may assume $\bu \neq \bzero$.} For the reverse implication, let $\bu_1$, $\bu_2 \in \sigma_\ba$ such that $\bu_1 + \bu_2 \in \ChMingHao{\langle \bu \rangle}$. By Lemma~\ref{L:intersection-of-first-meet-loci-and-cones}(i), $\varsigma_{\bu_1} \cap \varsigma_{\bu_2} = \varsigma_{\bu_1+\bu_2} = \varsigma_{\bu}$. By hypothesis, $\varsigma_{\bu_i} = \varsigma_\bu$ for $i=1,2$. Since the affine span of $\varsigma_\bu$ is an affine hyperplane in $N_\RR$, this means that $\bu_i \in \ChMingHao{\langle \bu \rangle}$ for $i=1,2$, as desired.

Next, we show the forward implication. Since $\bu \in \sigma_\ba$, $\ba \in \varsigma_\bu$. It remains to show that $\varsigma_\bu$ is maximal among all proper faces $\varsigma \prec \Upgamma$ containing $\ba$. To this end, let $\varsigma$ be a proper face of $\Upgamma$ containing $\varsigma_\bu$, and let $\bzero \neq \bu' \in \sigma_\ba$ such that $\varsigma = \varsigma_{\bu'}$. For every $i \in [n]$ such that $\bu_i = 0$, we have \[
    \varsigma_\bu + \RR_{\geq 0}\be_i^\vee \; = \; \varsigma_\bu \; \subset \; \varsigma_{\bu'}
\]
which implies $\bu'_i = 0$ (cf. \ref{X:first-meet-locus}(iii)). Therefore, for $N \gg 0$, $N\bu - \bu' \in N_\RR^+$. In fact, we \emph{\underline{claim}} that for $N \gg 0$, we have $N\bu - \bu' \in \sigma_\ba$. If not, for every $N \gg 0$, we have $N\bu - \bu' \in N_\RR^+ \smallsetminus \sigma_\ba$, i.e. there exists $\ba'_N \in \vertex(\Upgamma)$ so that \[
    \varphi\left(\bu - \frac{1}{N}\bu'\right) \; = \; \ba'_N \cdot \left(\bu - \frac{1}{N}\bu'\right) \; < \; \ba \cdot \left(\bu - \frac{1}{N}\bu'\right).
\]
Since $\vertex(\Upgamma)$ is finite, there exists a constant subsequence $(\ba'_{N_k})_{k \geq 1} = (\ba',\ba',\ba',\dotsb)$ of $(\ba'_N)_{N \gg 0}$. For all $k \geq 1$, we have \begin{align}\label{EQ:sequence-inequality}
    \ba' \cdot \left(\bu - \frac{1}{N_k}\bu'\right) \; < \; \ba \cdot \left(\bu - \frac{1}{N_k}\bu'\right).
\end{align}
Letting $k \to \infty$, $\ba' \cdot \bu \leq \ba \cdot \bu$. But $\ba \in \varsigma_\bu$, so $\ba \cdot \bu = \varphi(\bu) \leq \ba' \cdot \bu$. Thus, $\ba' \cdot \bu = \ba \cdot \bu$, i.e. $\ba' \in \varsigma_\bu$. In addition, $\varsigma_\bu \subset \varsigma_{\bu'}$, so $\ba, \ba' \in \varsigma_{\bu'}$, i.e. $\ba' \cdot \bu' = \varphi(\bu') = \ba \cdot \bu'$. However, both $\ba' \cdot \bu = \ba \cdot \bu$ and $\ba' \cdot \bu' = \ba \cdot \bu'$ contradict \eqref{EQ:sequence-inequality}. Consequently, our earlier \emph{\underline{claim}} holds, i.e. by replacing $\bu$ by a sufficiently large multiple of itself, we may assume $\bu-\bu' \in \sigma_\ba$. Since $\bu$ generates an extremal ray of $\sigma_\ba$, $\bu'$ and $\bu-\bu'$ both lie in $\RR_{\geq 0}\bu$. In particular, $\varsigma = \varsigma_{\bu'} = \varsigma_\bu$, as desired. 
\end{proof}

\begin{corollary}\label{C:structure-of-cones}
For $\ba \in \ChMingHao{\Upgamma}$, $\sigma_\ba$ is a convex rational polyhedral cone in $N_\RR^+$. More precisely, \[
    \sigma_\ba \; = \; \ChMingHao{\bigl\langle \bu_\tau \colon \ba \in \tau \prec^1 \Upgamma\bigr\rangle.}
\]
In particular, $\sigma_\ba \neq \{\bzero\}$ if and only if $\ba$ lies on the boundary of $\Upgamma$.
\end{corollary}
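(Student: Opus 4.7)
The plan is to harness Lemma~\ref{L:extremal-rays} to identify the extremal rays of $\sigma_\ba$ explicitly, and then recover $\sigma_\ba$ as their non-negative $\RR$-span. First, combining Lemma~\ref{L:extremal-rays} with observation~(ii) of~\ref{X:first-meet-locus}, the extremal rays of $\sigma_\ba$ are precisely the rays $\RR_{\geq 0}\bu_\tau$ indexed by the facets $\tau \prec^1 \Upgamma_+$ that contain $\ba$. Since $\Upgamma_+$ has only finitely many facets (cf.~the discussion in~\ref{X:piecewise-linear-convex-function}, in particular~\eqref{EQ:newton-Q-polyhedron-as-finite-intersection}), $\sigma_\ba$ has only finitely many extremal rays.

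Next, I would invoke the standard fact from finite-dimensional convex analysis that a pointed closed convex cone equals the closed conic hull of its extremal rays. As $\sigma_\ba \subset N_\RR^+$ is contained in the pointed cone $N_\RR^+$, it is itself pointed; and having only finitely many extremal rays generated by lattice vectors $\bu_\tau \in N^+$, their conic hull is automatically closed, so that
\[
    \sigma_\ba \; = \; \sum_{\ba \in \tau \prec^1 \Upgamma_+}{\RR_{\geq 0}\bu_\tau}.
\]
This manifestly exhibits $\sigma_\ba$ as a rational polyhedral cone in $N_\RR^+$. The final assertion then follows immediately: $\sigma_\ba = \{\bzero\}$ iff the displayed sum is empty iff no facet of $\Upgamma_+$ contains $\ba$; and since $\Upgamma_+$ is full-dimensional in $M_\RR$, with topological boundary equal to the union of its facets, this is equivalent to $\ba$ not lying on the boundary of $\Upgamma_+$.

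I do not anticipate any serious obstacle: Lemma~\ref{L:extremal-rays} carries the essential geometric content, and the rest is a routine appeal to convex analysis. Should one prefer not to quote a Krein--Milman-type statement directly, one may argue by induction on $\dim\sigma_\ba$: any $\bu$ in the relative interior of $\sigma_\ba$ can be written as $\bu = \bu' + t\bu_\tau$ for some $t>0$ and a facet $\tau$ containing $\ba$, where $\bu'$ lies on the relative boundary of $\sigma_\ba$, and the induction hypothesis handles $\bu'$; base cases $\dim \sigma_\ba \in \{0,1\}$ are trivial.
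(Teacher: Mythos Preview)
Your proposal is correct and follows essentially the same approach as the paper: both use Lemma~\ref{L:extremal-rays} to identify the extremal rays of $\sigma_\ba$ with the $\RR_{\geq 0}\bu_\tau$ for facets $\tau \ni \ba$, and then invoke the convex-analytic fact that a closed convex cone contained in $N_\RR^+$ is generated by its (finitely many) extremal rays (the paper cites \cite[Theorem 18.5]{rockafellar-convex-analysis} for this). Your treatment is slightly more explicit in spelling out the boundary characterization and in offering an alternative inductive argument, but the core is the same.
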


\begin{proof}
Since $\sigma_\ba$ is a closed convex cone in $N_\RR^+$, $\sigma_\ba$ is generated by its extremal rays, cf. \cite[Theorem 18.5]{rockafellar-convex-analysis}. \ChMingHao{Apply the preceding lemma.}
\end{proof}

In what follows, recall that the relative interior $\relint(\sigma)$ of a cone $\sigma$ in $N_\RR$ is the interior of $\sigma$ in its \ChMingHao{$\RR$-span} in $N_\RR$, \ChMingHao{and the relative interior $\relint(\varsigma)$ of a polyhedron $\varsigma$ in $M_\RR$ is the interior of $\varsigma$ in its affine span in $M_\RR$.}

\begin{corollary}\label{C:cone-face-correspondence}
For $\ba \in \ChMingHao{\Upgamma}$ and $\bu \in N_\RR^+$, the following statements are equivalent: \begin{enumerate}
    \item $\bu \in \relint(\sigma_\ba)$.
    \item $\varsigma_\bu = \bigcap\{\tau \prec^1 \Upgamma \colon \ba \in \tau\}$, \ChMingHao{where $\bigcap\varnothing := \Upgamma$}.
    \item $\ba \in \relint(\varsigma_\bu)$.
\end{enumerate}
Moreover, for $\bu \in \sigma_\ba$, $\bigcap\{\tau \prec^1 \Upgamma \colon \ba \in \tau\} \prec \varsigma_\bu$.
\end{corollary}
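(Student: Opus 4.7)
Set $\cF := \{\tau \prec^1 \Upgamma_+ \colon \ba \in \tau\}$ and $\varsigma^\star := \bigcap_{\tau \in \cF} \tau$. Since every face of $\Upgamma_+$ is the intersection of the facets of $\Upgamma_+$ containing it, $\varsigma^\star$ is the smallest face of $\Upgamma_+$ containing $\ba$, and in particular $\ba \in \relint(\varsigma^\star)$. By Corollary~\ref{C:structure-of-cones} combined with Lemma~\ref{L:extremal-rays}, $\sigma_\ba$ is a rational polyhedral cone whose set of extremal ray generators is precisely $\{\bu_\tau \colon \tau \in \cF\}$.

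Under this setup, (ii)$\Leftrightarrow$(iii) is immediate, since both conditions amount to the assertion $\varsigma_\bu = \varsigma^\star$ (the unique face of $\Upgamma_+$ whose relative interior contains $\ba$). The ``moreover'' statement is similarly direct: if $\bu \in \sigma_\ba$ then $\ba \in \varsigma_\bu$, so $\varsigma_\bu$ is a face of $\Upgamma_+$ containing $\ba$, whence by minimality of $\varsigma^\star$, $\varsigma^\star \prec \varsigma_\bu$.

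For (i)$\Rightarrow$(ii), I would invoke the standard characterization $\relint(\sigma_\ba) = \bigl\{\sum_{\tau \in \cF} c_\tau \bu_\tau \colon c_\tau > 0\bigr\}$. Given such a representation of $\bu$, and any $\ba' \in \Upgamma_+$, the inequalities $\ba' \cdot \bu_\tau \geq N_\tau$ (because $\ba' \in \Upgamma_+$) and equalities $\ba \cdot \bu_\tau = N_\tau$ (because $\ba \in \tau$ for each $\tau \in \cF$) yield
\[
    \ba' \cdot \bu \; = \; \sum_{\tau \in \cF} c_\tau (\ba' \cdot \bu_\tau) \; \geq \; \sum_{\tau \in \cF} c_\tau N_\tau \; = \; \ba \cdot \bu \; = \; \varphi(\bu),
\]
with equality (that is, $\ba' \in \varsigma_\bu$) precisely when $\ba' \cdot \bu_\tau = N_\tau$ for every $\tau \in \cF$ (using strict positivity of the $c_\tau$), i.e., precisely when $\ba' \in \varsigma^\star$. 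Hence $\varsigma_\bu = \varsigma^\star$.

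The substantive direction is (ii)$\Rightarrow$(i), for which I would argue by contradiction. Since $\ba \in \varsigma_\bu = \varsigma^\star$ forces $\bu \in \sigma_\ba$, assume $\bu \notin \relint(\sigma_\ba)$, so that $\bu$ lies in the relative interior of some proper face $\sigma^\bullet \prec \sigma_\ba$. As every face of a rational polyhedral cone is generated by the extremal rays it contains, $\sigma^\bullet = \sum_{\tau \in \cF^\bullet} \RR_{\geq 0} \bu_\tau$ for some proper subset $\cF^\bullet \subsetneq \cF$. Rerunning the calculation from (i)$\Rightarrow$(ii) with $\cF^\bullet$ in place of $\cF$ then gives $\varsigma_\bu = \bigcap_{\tau \in \cF^\bullet} \tau$. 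The key dimension identity $\dim(\sigma^\bullet) + \dim\bigl(\bigcap_{\tau \in \cF^\bullet} \tau\bigr) = n$ follows by identifying the orthogonal complement of $\mathrm{span}\{\bu_\tau \colon \tau \in \cF^\bullet\}$ with the direction-space of the (non-empty, as it contains $\ba$) affine flat $\bigcap_{\tau \in \cF^\bullet} H_\tau$, which is the affine span of $\bigcap_{\tau \in \cF^\bullet} \tau$. The analogous identity with $\cF$ in place of $\cF^\bullet$, together with the strict inequality $\dim(\sigma^\bullet) < \dim(\sigma_\ba)$, forces $\dim\bigl(\bigcap_{\tau \in \cF^\bullet} \tau\bigr) > \dim(\varsigma^\star)$, so $\varsigma_\bu$ properly contains $\varsigma^\star$, contradicting (ii). This dimension count is the main obstacle; once in place, the conclusion is a direct reapplication of (i)$\Rightarrow$(ii).
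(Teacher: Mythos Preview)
Your treatment of (ii)$\Leftrightarrow$(iii), the ``moreover'' clause, and (i)$\Rightarrow$(ii) is correct and matches the paper. The paper packages your inequality computation as a repeated application of Lemma~\ref{L:first-meet-locus-sum-intersection}, obtaining for \emph{any} representation $\bu = \sum_{\tau \in \cF} \lambda_\tau \bu_\tau$ the formula $\varsigma_\bu = \bigcap\{\tau \in \cF : \lambda_\tau > 0\}$; but the underlying calculation is identical to yours.

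There is, however, a genuine gap in your argument for (ii)$\Rightarrow$(i). The dimension identity $\dim(\sigma^\bullet) + \dim\bigl(\bigcap_{\tau \in \cF^\bullet} \tau\bigr) = n$ hinges on your assertion that the affine span of $\bigcap_{\tau \in \cF^\bullet} \tau$ equals the affine flat $\bigcap_{\tau \in \cF^\bullet} H_\tau$. That equality is \emph{not} automatic: for an arbitrary subset $\cF' \subsetneq \cF$, the face $F' := \bigcap_{\tau \in \cF'} \tau$ may be contained in further facets $\tau'' \notin \cF'$, and then $\mathrm{aff}(F') = \bigcap\{H_\tau : F' \subset \tau\}$ is strictly smaller than $\bigcap_{\tau \in \cF'} H_\tau$, so your orthogonality argument only yields $\dim(\sigma^\bullet) + \dim(F') \leq n$, the wrong inequality for your purposes. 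What you need is that when $\cF^\bullet$ arises as the extremal-ray set of a \emph{face} $\sigma^\bullet$ of $\sigma_\ba$, it already exhausts all facets containing $\bigcap_{\tau \in \cF^\bullet} \tau$. This is true, but proving it amounts to exhibiting $\sigma^\bullet$ as $\sigma_{\ba'}$ for some $\ba' \in \Upgamma_+$ (e.g.\ $\ba' = \ba + \epsilon w$ for $w$ a supporting functional cutting out $\sigma^\bullet$) so that Corollary~\ref{C:structure-of-cones} applies --- and that is precisely the face--cone correspondence this corollary and \ref{X:cone-face-correspondence} are in the process of setting up. So as written, the step is circular.

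The paper's route avoids this by exploiting that the formula $\varsigma_\bu = \bigcap\{\tau : \lambda_\tau > 0\}$ is representation-independent: one then reduces (i)$\Leftrightarrow$(ii) to the elementary characterization of $\relint(\sigma_\ba)$ as those $\bu$ admitting a representation with \emph{all} $\lambda_\tau > 0$. (The paper is also brief at this juncture, but the reduction it invokes is to a standard fact about relative interiors of polyhedral cones rather than to the duality being established.)
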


\begin{proof}
\ChMingHao{We may assume $\bu \neq \bzero$.} Note that (ii)$\iff$(iii), since $\bigcap\{\tau \prec^1 \Upgamma \colon \ba \in \tau\}$ is the unique face $\varsigma$ of $\Upgamma$ such that $\ba \in \relint(\varsigma)$. For (i)$\iff$(ii), it suffices to focus on the case $\bu \in \sigma_\ba$ (since otherwise, $\ba \notin \varsigma_\bu$), and by the preceding corollary $\bu = \sum_{\ba \in \tau \prec^1 \Upgamma}{\lambda_\tau\bu_\tau}$ for some $\lambda_\tau \in \RR_{\geq 0}$. By Lemma~\ref{L:intersection-of-first-meet-loci-and-cones}(i), we have \[
    \varsigma_\bu \; = \; \bigcap\bigl\{\varsigma_{(\lambda_\tau\bu_\tau)} \colon \ba \in \tau \prec^1 \Upgamma\bigr\} \; = \; \bigcap\bigl\{\tau \prec^1 \Upgamma \colon \ba \in \tau \textrm{ and } \lambda_\tau > 0\bigr\}
\]
which contains $\bigcap\{\tau \prec^1 \Upgamma \colon \ba \in \tau\}$ as a face. It remains to note that \ChMingHao{we can arrange for $\{\lambda_\tau \colon \ba \in \tau \prec^1 \Upgamma\} \subset \RR_{>0}$} if and only if $\bu \in \relint(\sigma_\ba)$.
\end{proof}

\begin{xpar}\label{X:cone-face-correspondence}
The preceding corollary sets up a natural correspondence between: \begin{align*}
    \bigl\{\textrm{faces $\varsigma \prec \Upgamma$}\bigr\} \quad &\xleftrightarrow{\quad\quad} \quad \bigl\{\textrm{cones $\sigma$ in $\Sigma$}\bigr\} \\
    \varsigma \qquad &\xmapsto{\quad\quad} \qquad \sigma_\varsigma \\
    \varsigma_\sigma \qquad &\xmapsfrom{\quad\quad} \qquad \sigma
\end{align*}
which is defined as follows. Given $\varsigma \prec \Upgamma$, \ChMingHao{$\sigma_\varsigma := \sigma_\ba$ for any $\ba \in \relint(\varsigma)$. We call $\sigma_\varsigma$ the {\sffamily cone in $\Sigma$ dual to $\varsigma$}. Conversely, given $\sigma$ in $\Sigma$, $\varsigma_\sigma := \varsigma_\bu$ for any $\bu \in \relint(\sigma)$. We call $\varsigma_\sigma$ the {\sffamily face of $\Upgamma$ dual to $\sigma$}.} Then: \begin{enumerate}
    \item If faces $\varsigma, \varsigma' \prec \Upgamma$ correspond to cones $\sigma, \sigma'$ in $\Sigma$, then $\varsigma \prec \varsigma'$ if and only if $\sigma \succ \sigma'$. \ChMingHao{Corollary~\ref{C:structure-of-cones} gives the forward implication, and the converse follows from the preceding corollary.}
    \item If a face $\varsigma \prec \Upgamma$ corresponds to a cone $\sigma$ in $\Sigma$, then $\dim(\varsigma) + \dim(\sigma) = n$. This follows by induction on $\dim(\sigma)$, where the induction step is supplied by (i).
    \item If a facet $\tau \prec^1 \Upgamma$ corresponds to a ray $\rho$ in $\Sigma$, note that $\bu_\tau = \bu_\rho$.
\end{enumerate}
\end{xpar}

\begin{corollary}\label{C:fan}
$\Sigma$ is a fan in $N_\RR$ whose support $\abs{\Sigma}$ equals $N_\RR^+$.
\end{corollary}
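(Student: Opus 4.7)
The plan is to verify the two fan axioms (every face of a cone in $\Sigma$ is in $\Sigma$, and any intersection of two cones is a face of each) together with the support claim, by exploiting the inclusion-reversing correspondence $\varsigma \leftrightarrow \sigma_\varsigma$ between faces of $\Upgamma_+$ and cones in $\Sigma$ established in \ref{X:cone-face-correspondence}, together with Lemma~\ref{L:intersection-of-cones} and Corollary~\ref{C:structure-of-cones}.

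The support claim is immediate: each $\sigma_\ba$ lies in $N_\RR^+$ by definition, and conversely, for any $\bu \in N_\RR^+$, the first meet locus $\varsigma_\bu$ is a non-empty face of $\Upgamma_+$, so picking any $\ba \in \varsigma_\bu$ gives $\bu \in \sigma_\ba$. That each $\sigma_\ba$ is a convex rational polyhedral cone is already the content of Corollary~\ref{C:structure-of-cones}, so one only needs to verify the two axioms.

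For the face axiom, given any face $\sigma' \prec \sigma_\ba$, I would pick $\bu \in \relint(\sigma')$ and any $\ba' \in \relint(\varsigma_\bu)$. Then Corollary~\ref{C:cone-face-correspondence} applied to $\ba'$ gives $\bu \in \relint(\sigma_{\ba'})$, so $\sigma_{\ba'}$ is the unique cone in $\Sigma$ containing $\bu$ in its relative interior. Since $\ba \in \varsigma_\bu$ means $\varsigma_\bu$ contains the face of $\Upgamma_+$ whose relative interior contains $\ba$, the inclusion-reversing correspondence in \ref{X:cone-face-correspondence}(i) yields $\sigma_{\ba'} \prec \sigma_\ba$; and being a face of $\sigma_\ba$ whose relative interior contains $\bu$, it must equal $\sigma'$.

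For the intersection axiom, given $\sigma_\ba$ and $\sigma_{\ba'}$, Lemma~\ref{L:intersection-of-cones} identifies $\sigma_\ba \cap \sigma_{\ba'}$ with $\sigma_{\ba''}$ where $\ba'' := \tfrac{1}{2}\ba + \tfrac{1}{2}\ba'$. It remains to show $\sigma_{\ba''}$ is a face of each of $\sigma_\ba$, $\sigma_{\ba'}$; by symmetry it suffices to handle $\sigma_\ba$. Let $\upsilon,\upsilon',\upsilon''$ be the (unique) faces of $\Upgamma_+$ whose relative interiors contain $\ba,\ba',\ba''$ respectively. Because $\ba''$ is an interior point of the segment joining $\ba$ and $\ba'$, the smallest face of $\Upgamma_+$ containing $\ba''$ must contain both $\ba$ and $\ba'$, so $\upsilon''$ contains both $\upsilon$ and $\upsilon'$ as faces; in particular $\upsilon \prec \upsilon''$. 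Invoking \ref{X:cone-face-correspondence}(i) once more, this translates to $\sigma_{\ba''} \prec \sigma_\ba$, completing the proof. The main conceptual step is to recognize that the convex combination $\ba''$ lands in the relative interior of the join of $\upsilon$ and $\upsilon'$ inside $\Upgamma_+$, but this is standard and the correspondence does the rest of the work.
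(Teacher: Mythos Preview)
Your proposal is correct and follows essentially the same approach as the paper's own proof, which simply cites Corollary~\ref{C:structure-of-cones}, \ref{X:cone-face-correspondence}, and Lemma~\ref{L:intersection-of-cones} for the fan axioms and gives the identical support argument. You have merely unpacked in detail how those three ingredients combine: the face axiom via the inclusion-reversing correspondence in \ref{X:cone-face-correspondence}(i), and the intersection axiom via Lemma~\ref{L:intersection-of-cones} together with the extremality of faces applied to the midpoint $\ba''$.
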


\begin{proof}
\ChMingHao{This follows from Lemma~\ref{L:intersection-of-first-meet-loci-and-cones}(ii), Corollary~\ref{C:structure-of-cones}, and \ref{X:cone-face-correspondence}.}
\end{proof}

\begin{xpar}[Notation]\label{X:convention-2.2-B}
For $\rho \in \Sigma[1]$, we will denote the facet $\varsigma_\rho = \varsigma_\bu \prec^1 \Upgamma$ dual to $\rho$ by $\tau_\rho$ or $\tau_\bu$ instead, cf. \ref{X:convention-1.2-A}. Likewise, for $\tau \prec^1 \Upgamma$, we denote the ray $\sigma_\tau \in \Sigma[1]$ dual to $\tau$ by $\rho_\tau$ instead, cf. \ref{X:convention-2.2-A}. Then the following corollary is immediate from Corollary~\ref{C:structure-of-cones} and Corollary~\ref{C:cone-face-correspondence}:
\end{xpar}

\begin{corollary}\label{C:cone-face-correspondence-A}\;
For a face $\varsigma \prec \Upgamma$, we have: \[
    \sigma_\varsigma \; = \; \ChMingHao{\bigl\langle}\rho_\tau \colon \varsigma \prec \tau \prec^1 \Upgamma\ChMingHao{\bigr\rangle}.
\]
Dually, for a cone $\sigma$ in $\Sigma$, we have: \[
    \varsigma_\sigma \; = \; \bigcap\bigl\{\tau_\rho \colon \rho \in \sigma[1]\bigr\}, \qquad \ChMingHao{\textrm{where } \; \bigcap\varnothing \; := \; \Upgamma}.
\]
\end{corollary}

The next corollary follows from the preceding corollary, and \ref{X:first-meet-locus}(iii).

\begin{corollary}\label{C:cone-face-correspondence-B}
Let $\varsigma$ be a face of $\Upgamma$, and $\sigma$ be the cone in $\Sigma$ dual to $\varsigma$. For $i \in [n]$, let $\{\be_i^\vee = 0\}$ denote the coordinate hyperplane in $N_\RR$ defined by $\be_i^\vee = 0$. Then the following statements are equivalent: \begin{enumerate}
    \item $\sigma \subset \{\be_i^\vee = 0\}$, i.e. for every $\rho \in \sigma[1]$, $u_{\rho,i} = 0$.
    \item $\varsigma$ is non-compact in the $i\textsuperscript{th}$ coordinate, i.e. $\varsigma + \RR_{\geq 0}\be_i^\vee = \varsigma$.
    \item There exists $\ba \in \varsigma$ such that $\ba + \be_i^\vee \in \varsigma$.
\end{enumerate}
In particular, $\varsigma$ is compact if and only if $\sigma$ is not contained in any coordinate hyperplane $\{\be_i^\vee = 0\}$ in $N_\RR$.
\end{corollary}

%\begin{remark}
%In this paragraph, we give an alternative argument for the inequality $\varphi \leq \min\sS$ in \ref{X:alt-characterization}. 

%Let $\ell \in \sS$, fix any $\bu \in N_\RR^+$, and it suffices to show $\ell(\bu) \geq \varphi(\bu)$. Indeed, let $\Sigma$ be the fan in $N_\RR$ arising from $\Upgamma$ as above. Since the support of $\Sigma$ is $N_\RR^+$, $\bu$ lies in $\relint(\sigma)$ for some cone $\sigma \in \Sigma$. By Corollary~\ref{C:structure-of-cones}, $\bu = \sum_{\tau \prec^1 \Upgamma}{\lambda_\tau \bu_\tau}$ for some $\lambda_\tau \in \RR_{\geq 0}$, where $\lambda_\tau > 0$ if and only if $\bu_\tau$ is an extremal ray of $\sigma$. Thus \[
    %\ell(\bu) \; = \; \sum_{\tau \prec^1 \Upgamma}{\lambda_\tau \ell(\bu_\tau)} \; \geq \; \sum_{\tau \prec^1 \Upgamma}{\lambda_\tau N_\tau} \; = \; \sum_{\tau \prec^1 \Upgamma}{\lambda_\tau\varphi(\bu_\tau)} \; = \; \varphi(\bu)
%\]
%where the last equality follows from Corollary~\ref{C:linearity-cone}, as desired.
%\end{remark}

\subsection{Fantastacks and multi-weighted blow-ups}\label{2.3}

\begin{xpar}\label{X:subdivision-toric-morphism}
Let $\Sigma$ be a fan in $N_\RR$ whose support $\abs{\Sigma}$ is $N_\RR^+$. Then $\Sigma$ is a subdivision or refinement \cite[p. 130]{cox-little-schenck-toric-varieties} of the standard fan $\Sigma_\std$ in $N_\RR$ generated by the standard cone $\sigma_\std = \ChMingHao{\bigl\langle}\be_i \colon i \in [n]\ChMingHao{\bigr\rangle}$. By \cite[Theorem 3.4.7]{cox-little-schenck-toric-varieties}, there is a toric, proper, birational morphism \[
    X_\Sigma \; \xrightarrow{\quad\uppi_\Sigma\quad} \; X_{\Sigma_\std} \; = \; \AA^n
\]
where $X_\Sigma$ (resp. $X_{\Sigma_\std}$) is the toric variety associated to $\Sigma$ (resp. $\Sigma_\std$).
\end{xpar}

\begin{xpar}\label{X:cox-construction}
While $X_\Sigma$ is possibly singular, there is nevertheless a canonical smooth Artin stack $\sX_\Sigma$ whose good moduli space is $X_\Sigma$, which is supplied by Cox's construction, cf. \cite[\S 5.1]{cox-little-schenck-toric-varieties}. To start, we define a $\ZZ$-lattice homomorphism \[
    \widehat{N} \; := \; \ZZ^{\Sigma[1]} \; \xrightarrow{\quad\beta\quad} \; \ZZ^n \; = \; N
\]
by mapping the standard basis vector $\be_\rho$ indexed by $\rho \in \Sigma[1]$ to the first lattice point $\bu_\rho$ on $\rho$ (\ref{X:convention-2.2-A}). 

Next, let $\overline{\Sigma}$ denote the set of convex rational polyhedral cones $\sigma$ in $N_\RR^+$ such that $\sigma[1] \subset \sigma'[1]$ for some $\sigma' \in \Sigma$, in which case we say $\sigma$ {\sffamily can be inscribed in} $\sigma'$, and write $\sigma \sqsubset \sigma'$. We call $\overline{\Sigma}$ the {\sffamily augmentation} of $\Sigma$. For each cone $\sigma$ in $\overline{\Sigma}$, we associate to it the following \emph{smooth} cone in $\widehat{N}_\RR := \widehat{N} \otimes_\ZZ \RR$: \[
    \widehat{\sigma} \; := \; \ChMingHao{\bigl\langle} \be_\rho \colon \beta(\be_\rho) = \bu_\rho \in \sigma \ChMingHao{\bigr\rangle} \; = \; \ChMingHao{\bigl\langle} \be_\rho \colon \rho \in \sigma[1] \ChMingHao{\bigr\rangle}.
\]
Note that for every $\sigma_1, \sigma_2 \in \overline{\Sigma}$, we have $\sigma_1 \sqsubset \sigma_2$ if and only if $\widehat{\sigma}_1 \prec \widehat{\sigma}_2$. 

Finally, let $\widehat{\Sigma}$ denote the smooth fan $\{ \widehat{\sigma} \colon \sigma \in \overline{\Sigma} \}$, which is generated by $\{\widehat{\sigma} \colon \sigma \in \Sigma[\max]\}$. Then $(\widehat{\Sigma},\beta)$ is the {\sffamily stacky fan associated to $\Sigma$} \cite[Definitions 2.4 and 4.1]{geraschenko-satriano-toric-stacks-I}. By definition, $\beta$ is compatible with the fans $\widehat{\Sigma}$ and $\Sigma$, and thus induces a toric morphism \[
    X_{\widehat{\Sigma}} \; \xrightarrow{\quad\quad} \; X_{\Sigma}.
\]
Let $\beta^\vee \colon N^\vee \to \widehat{N}^\vee$ denote the dual of $\beta$, and let \[
    \GG_\beta := \Hom_\GrpSch\bigl(\Coker(\beta^\vee),\Gm\bigr) \subset \Hom_\GrpSch\bigl(\widehat{N}^\vee,\Gm\bigr) =: T_{\widehat{N}} = \Gm^{\Sigma[1]}
\]
which is the kernel of \[
    \textrm{\small{$\Gm^{\Sigma[1]} = T_{\widehat{N}} := \Hom_\GrpSch(\widehat{N}^\vee,\Gm) \xrightarrow{\quad T_\beta \quad} \Hom_\GrpSch(N^\vee,\Gm) =: T_N = \Gm^n.$}}
\]
Then $\GG_\beta$ acts on $X_{\widehat{\Sigma}}$ via the torus action $\GG_\beta \subset \Gm^{\Sigma[1]} \curvearrowright X_{\widehat{\Sigma}}$, and the above toric morphism $X_{\widehat{\Sigma}} \to X_\Sigma$ descends to the stack quotient: \[
    \sX_\Sigma \; := \; \bigl[X_{\widehat{\Sigma}} \q \GG_\beta\bigr] \; \xrightarrow{\quad\quad} \; X_{\Sigma}
\]
which is a good moduli space of the smooth Artin stack $\sX_\Sigma$ \cite[Example 6.24]{geraschenko-satriano-toric-stacks-I}. We denote by $\Uppi_\Sigma$ the composition \[
    \sX_\Sigma \; \xrightarrow{\quad\quad} \; X_{\Sigma} \; \xrightarrow{\quad\uppi_\Sigma\quad} \; X_{\Sigma_\std} \; = \; \AA^n
\]
and call $\sX_\Sigma$ the {\sffamily fantastack associated to $\Sigma$} \cite{geraschenko-satriano-toric-stacks-I}. This morphism is birational, universally closed and surjective, cf. \cite[Remark 2.1.8]{abramovich-quek-multi-weighted-resolution}. 
\end{xpar}

\begin{definition}\label{D:multi-weighted-blow-up}
If $\Sigma$ is the normal fan of a Newton $\QQ$-polyhedron $\Upgamma$ in $M_\RR^+$, we call the morphism $\Uppi_\Sigma \colon \sX_\Sigma \to \AA^n$ (\ref{X:cox-construction}) the {\sffamily multi-weighted blow-up of $\AA^n$ along $\Upgamma$}. (See the appendix to this section, as well as \ref{X:explicating-multi-weighted-blow-up}, for an explanation to this name.)
\end{definition}

\begin{remark}
Note that $\sX_\Sigma$ is a smooth toric Artin stack with \ChMingHao{trivial generic} stabilizer. More precisely, it has a dense open that is a torus, namely \[
    \Gm^{\Sigma[1]} \q \GG_\beta \; \xrightarrow[\simeq]{\quad T_\beta\quad} \; \Gm^n.
\]
The action $\Gm^{\Sigma[1]} \curvearrowright X_{\widehat{\Sigma}}$ descends to an action $(\Gm^{\Sigma[1]} \q \GG_\beta) \curvearrowright \sX_\Sigma$, which extends the multiplicative action of the torus $(\Gm^{\Sigma[1]} \q \GG_\beta)$ on itself.
\end{remark}

\begin{xpar}[Description of the morphism $\Uppi_\Sigma \colon X_\Sigma \to \AA^n$]\label{X:explicating-multi-weighted-blow-up}
For the remainder of this paper, we will make the obvious identification $\Sigma_\std[1] \longleftrightarrow [n]$. Given a fan $\Sigma$ in $N_\RR$ whose support is $N_\RR^+$, we have $\Sigma[1] \supset \Sigma_\std[1] = [n]$, and we denote the complement $\Sigma[1] \smallsetminus [n]$ by $\Sigma[\exc]$. We call the rays in $\Sigma[\exc]$ {\sffamily exceptional rays}. 

To explicate the morphism $\Uppi_\Sigma \colon \sX_\Sigma \to \AA^n$, first note that the homomorphism $\beta \colon \ZZ^{\Sigma_\fa(1)} \to N = \ZZ^n$ fits into the short exact sequence: \begin{equation}\label{EQ:SES}
    0 \; \xrightarrow{\quad} \; \ZZ^{\Sigma[\exc]} \; \xrightarrow{\quad\alpha \ = \ \tiny \begin{bmatrix}\bB\\-\bI\end{bmatrix}\quad} \; \ZZ^{\Sigma[1]} \; \xrightarrow{\quad\footnotesize \beta \ = \ \begin{bmatrix}\bI&\bB\end{bmatrix}\quad} \; \ZZ^n \; \xrightarrow{\quad} \; 0
\end{equation}
where $\bI$ denotes the identity matrix of order $\#\Sigma[\exc]$ and $\bB$ is the matrix whose $\rho$\textsuperscript{th}-indexed column is $\bu_\rho$ for each $\rho \in \bE(\fa)$. Using \eqref{EQ:SES} and unraveling definitions in \ref{X:cox-construction} then yields the following description: \[
    \begin{tikzcd}
        \sX_\Sigma = \left[X_{\widehat{\Sigma}} \q \GG_\beta\right] = \left[\Spec\bigl(\kk[x_1',\dotsc,x_n']\bigl[x_\rho' \colon \rho \in \Sigma[\exc]\bigr]\bigr) \smallsetminus V(J_\Sigma) \q \Gm^{\Sigma[\exc]}\right] \arrow[to=2-1, "\Uppi_\Sigma"] \\
        \AA^n = \Spec(\kk[x_1,\dotsc,x_n])
    \end{tikzcd}
\]
where: \begin{enumerate}
    \item $\Uppi_\Sigma$ is induced by the $\kk$-algebra homomorphism \[
        \qquad \qquad \Uppi_\Sigma^\# \; \colon \; \kk[x_1,\dotsc,x_n] \; \xrightarrow{\quad\quad} \; \kk[x_1',\dotsc,x_n']\bigl[x_\rho' \colon \rho \in \Sigma[\exc]\bigr]
    \]
    which maps \[
        \qquad \qquad x_i \; \mapsto \; \prod_{\rho \in \Sigma[1]}{(x_\rho')^{u_{\rho,i}}} \; = \; x_i' \cdot \prod_{\rho \in \Sigma[\exc]}{(x_\rho')^{u_{\rho,i}}} \qquad \textrm{for $i \in [n]$}
    \]
    where for every $\rho \in \Sigma[1]$, $u_{\rho,i}$ is the $i$\textsuperscript{th} coordinate of $\bu_\rho$.
    
    \item The ideal $J_\Sigma$ is called the {\sffamily irrelevant ideal}, and equals to: \[
        \qquad \qquad J_\Sigma \; := \; \bigl(x_\sigma' \colon \sigma \in \Sigma\bigr) \; = \; \bigl(x_\sigma' \colon \sigma \in \Sigma[\max]\bigr)
    \]
    where for every cone $\sigma$ in $\Sigma$, or more generally $\sigma$ in $\overline{\Sigma}$, \[
        \qquad \qquad x_\sigma' \; := \; \prod_{\substack{\rho \in \Sigma[1] \smallsetminus \sigma[1]}}{x_\rho'} \; = \; \prod_{\substack{\rho \in \Sigma[1] \\ \varsigma_\sigma \not\subset \tau_\rho}}{x_\rho'}
    \]
    and the open substack of $\sX_\Sigma$ on which $x_\sigma'$ is invertible: \begin{align*}
        \qquad \qquad D_+(\sigma) \; &:= \; \bigl[U_\sigma \q \Gm^{\Sigma[\exc]}\bigr] \\
        \; &:= \; \left[\Spec\bigl(\kk[x_1',\dotsc,x_n']\bigl[x_\rho' \colon \rho \in \Sigma[\exc]\bigr]\bigl[x_\sigma'^{-1}\bigr]\bigr) \q \Gm^{\Sigma[\exc]}\right]
    \end{align*}
    is called the {\sffamily $\sigma$-chart} of $\sX_\Sigma$. Together the charts in $\{D_+(\sigma) \colon \sigma \in \Sigma[\max]\}$ form an open cover for $\sX_\Sigma$.
    
    \item For each $i \in [n]$, the $\ZZ^{\Sigma[\exc]}$-weight of $x_i'$ is $(u_{\rho,i})_{\rho \in \Sigma[\exc]}$, i.e. the $i$\textsuperscript{th} row of the matrix $\bB$ in \eqref{EQ:SES}. For each $\rho \in \Sigma[\exc]$, the $\ZZ^{\Sigma[\exc]}$-weight of $x_\rho'$ is $-\be_\rho \in \ZZ^{\Sigma[\exc]}$. This describes the action $\GG_\beta = \Gm^{\Sigma[\exc]} \curvearrowright X_{\widehat{\Sigma}}$.
    
    \item The orbit-cone correspondence for $X_{\widehat{\Sigma}}$ descends to an orbit-cone correspondence for $\sX_\Sigma$. More precisely, for every cone $\sigma$ in $\overline{\Sigma}$, its corresponding $\Gm^{\Sigma[1]}$-orbit $O_\sigma$ of $X_{\widehat{\Sigma}}$: \[
        \qquad \qquad O_\sigma \; := \; U_\sigma \smallsetminus \bigcup\bigl\{U_{\sigma'} \colon \sigma' \sqsubset \sigma,\; \sigma' \neq \sigma\bigr\} \; \xhookrightarrow{\quad\textrm{closed}\quad} \; U_\sigma
    \]
    descends to its corresponding $(\Gm^{\Sigma[1]} \q \Gm^{\Sigma[\exc]})$-orbit $O(\sigma)$ of $\sX_\Sigma$: \begin{align*}
        \qquad \qquad O(\sigma) \; &:= \; \left[O_\sigma \q \GG_\beta\right] \\
        \; &= \; D_+(\sigma) \smallsetminus \bigcup\bigl\{D_+(\sigma') \colon \sigma' \sqsubset \sigma,\; \sigma' \neq \sigma\bigr\} \\
        \; &= \; V\bigl(x_\rho' \colon \rho \in \sigma[1]\bigr) \; \xhookrightarrow{\quad\textrm{closed}\quad} \; D_+(\sigma).
    \end{align*}
    Since $U_\sigma = \bigsqcup\{O_\sigma \colon \sigma' \sqsubset \sigma\}$, we also have \[
        \qquad \qquad D_+(\sigma) \; = \; \bigsqcup\bigl\{O(\sigma) \colon \sigma' \sqsubset \sigma\bigr\}.
    \]

    \item By the description of $\Uppi_\Sigma$ in (i) and the description of $J_\Sigma$ in (ii), observe that $\Uppi_\Sigma$ maps \[
        \qquad \qquad U \; := \; \textrm{open substack of $\sX_\Sigma$ on which $\prod_{\rho \in \Sigma[\exc]}{x_\rho'}$ is invertible}
    \]
    isomorphically onto the complement of the closed subscheme \[
        \qquad \qquad V\left(\prod_{\substack{i \in [n] \smallsetminus \sigma[1]}}{x_i} \; \colon \; \sigma \in \Sigma[\max] \right) \; \subset \; \AA^n.
    \]
    We call the divisors in $\bigl\{V(x_\rho') \subset \sX_\Sigma \colon \rho \in \Sigma[\exc]\bigr\}$ the {\sffamily irreducible exceptional divisors} of $\Uppi_\Sigma$.
\end{enumerate}
\end{xpar}

The next lemma, and more importantly its corollary, will be useful for later purposes:

\begin{lemma}\label{L:preimage-of-origin}
If a cone $\sigma$ in $\overline{\Sigma}$ satisfies the condition: \[
    D_+(\sigma) \cap \Uppi_\Sigma^{-1}(\bzero) \; \neq \; \varnothing
\]
then $\sigma$ is not contained in any coordinate hyperplane $\{\be_i^\vee = 0\}$ in $N_\RR$.
\end{lemma}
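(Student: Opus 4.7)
The plan is to argue by contrapositive: assuming $\sigma \in \overline{\Sigma}$ is contained in some coordinate hyperplane $\{\be_i^\vee = 0\}$ in $N_\RR$ with $i \in [n]$, I will show that $\Uppi_\Sigma^\#(x_i)$ is an \emph{invertible} regular function on the chart $D_+(\sigma) \subset \sX_\Sigma$, which forces $D_+(\sigma) \cap \Uppi_\Sigma^{-1}(\bzero) = \varnothing$ since $\Uppi_\Sigma^{-1}(\bzero) \subset V(\Uppi_\Sigma^\#(x_i))$.

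The first observation is the key bookkeeping point: if $\sigma \subset \{\be_i^\vee = 0\}$, then $u_{\rho,i} = 0$ for every $\rho \in \sigma[1]$, and moreover the standard ray $i \in \Sigma_\std[1] \subset \Sigma[1]$ (corresponding to $\be_i$) cannot lie in $\sigma[1]$ because $\be_i \cdot \be_i^\vee = 1 \neq 0$. Hence $i$ belongs to $\Sigma[1] \smallsetminus \sigma[1]$, so by the very definition of the irrelevant monomial $x_\sigma' = \prod_{\rho \in \Sigma[1] \smallsetminus \sigma[1]} x_\rho'$ in \ref{X:explicating-multi-weighted-blow-up}(ii), the variable $x_i'$ is one of the factors of $x_\sigma'$, and is therefore invertible on $D_+(\sigma)$.

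Next I unpack the formula $\Uppi_\Sigma^\#(x_i) = x_i' \cdot \prod_{\rho \in \Sigma[\exc]} (x_\rho')^{u_{\rho,i}}$ from \ref{X:explicating-multi-weighted-blow-up}(i) and split the product over $\Sigma[\exc]$ according to whether $\rho$ lies in $\sigma[1]$ or not. For $\rho \in \Sigma[\exc] \cap \sigma[1]$ the exponent $u_{\rho,i}$ vanishes, so those factors contribute $1$; for $\rho \in \Sigma[\exc] \smallsetminus \sigma[1]$ the variable $x_\rho'$ is again a factor of $x_\sigma'$, and therefore invertible on $D_+(\sigma)$. Combined with the previous paragraph, every factor of $\Uppi_\Sigma^\#(x_i)$ is invertible on $D_+(\sigma)$, so $\Uppi_\Sigma^\#(x_i)$ itself is an invertible regular function on $D_+(\sigma)$, which completes the argument.

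I do not expect any real obstacle here; the statement is essentially a formal consequence of the explicit chart description in \ref{X:explicating-multi-weighted-blow-up}, and the only subtle point is keeping straight the identification $\Sigma_\std[1] = [n] \subset \Sigma[1]$ so that the coordinate hyperplane condition $\sigma \subset \{\be_i^\vee = 0\}$ automatically excludes the standard ray $i$ from $\sigma[1]$. Everything else is a direct unwinding of $\Uppi_\Sigma^\#$ and of the definition of $x_\sigma'$.
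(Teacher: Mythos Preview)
Your proposal is correct and follows essentially the same approach as the paper: both argue by contrapositive that $\sigma \subset \{\be_i^\vee = 0\}$ forces $\Uppi_\Sigma^\#(x_i)$ to be invertible on $D_+(\sigma)$, whence $D_+(\sigma)$ misses $\Uppi_\Sigma^{-1}(\bzero)$. The paper packages this as a short chain of equivalences using the full-product form $\Uppi_\Sigma^\#(x_i) = \prod_{\rho \in \Sigma[1]}(x_\rho')^{u_{\rho,i}}$, while you spell out the same reasoning more explicitly by separating the $x_i'$ factor and splitting the remaining product over $\Sigma[\exc]$ according to membership in $\sigma[1]$.
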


\begin{proof}
Indeed, for $i \in [n]$, we have: \begin{align*}
    \sigma \subset \{\be_i^\vee = 0\} \; &\iff \; u_{\rho,i} = 0 \textrm{ for every } \rho \in \sigma[1] \\
    &\iff \; \Uppi_\Sigma^\#(x_i) \; = \; \prod_{\rho \in \Sigma[1]}{(x_\rho')^{u_{\rho,i}}} \textrm{ is invertible on } D_+(\sigma) \\
    &\iff \; D_+(\sigma) \cap \Uppi_\Sigma^{-1}\bigl(V(x_i)\bigr) = \varnothing \\
    &\ \implies \; D_+(\sigma) \cap \Uppi_\Sigma^{-1}(\bzero) = \varnothing. \qedhere
\end{align*}
\end{proof}

\begin{corollary}\label{C:preimage-of-origin}
We have: \[
    \Uppi_\Sigma^{-1}(\bzero) \; \subset \; \bigsqcup\left\{O(\sigma) \; \colon \; \parbox{5cm}{$\sigma \in \overline{\Sigma}$ not contained in any coordinate hyperplane in $N_\RR$}\right\}.
\]
\end{corollary}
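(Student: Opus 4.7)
The plan is to combine the orbit-cone decomposition of $\sX_\Sigma$ (from \ref{X:explicating-multi-weighted-blow-up}(iv)) with Lemma~\ref{L:preimage-of-origin}, since all the work has effectively been done there. More precisely, the argument will be a direct deduction.

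First, I would recall that by \ref{X:explicating-multi-weighted-blow-up}(iv), the fantastack admits a stratification by orbits
\[
    \sX_\Sigma \; = \; \bigsqcup\bigl\{O(\sigma) \colon \sigma \in \overline{\Sigma}\bigr\},
\]
and moreover each $D_+(\sigma)$ decomposes as $\bigsqcup\{O(\sigma') \colon \sigma' \sqsubset \sigma\}$, so in particular $O(\sigma) \subset D_+(\sigma)$ for every $\sigma \in \overline{\Sigma}$.

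Next, let $x \in \Uppi_\Sigma^{-1}(\bzero)$ be any point. By the orbit decomposition above, there is a unique $\sigma \in \overline{\Sigma}$ with $x \in O(\sigma)$. Since $O(\sigma) \subset D_+(\sigma)$, this gives
\[
    D_+(\sigma) \cap \Uppi_\Sigma^{-1}(\bzero) \; \neq \; \varnothing,
\]
so by Lemma~\ref{L:preimage-of-origin} the cone $\sigma$ is not contained in any coordinate hyperplane $\{\be_i^\vee = 0\}$ in $N_\RR$. Thus every point of $\Uppi_\Sigma^{-1}(\bzero)$ lies in some $O(\sigma)$ indexed by such a $\sigma$, which is precisely the claimed inclusion.

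There is no real obstacle here: the substance of the statement is already in Lemma~\ref{L:preimage-of-origin}, and this corollary is just the observation that the lemma, phrased in terms of the open charts $D_+(\sigma)$, transfers automatically to the finer orbit stratification since each orbit $O(\sigma)$ is contained in its defining chart $D_+(\sigma)$.
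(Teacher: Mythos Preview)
Your proof is correct and matches the paper's intended argument: the paper states the corollary immediately after Lemma~\ref{L:preimage-of-origin} without further proof, leaving it as a direct consequence of that lemma together with the orbit decomposition in \ref{X:explicating-multi-weighted-blow-up}(iv). Your write-up makes explicit exactly the deduction the paper leaves implicit.
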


\begin{xpar}\label{X:preimage-of-origin}
If $\Sigma$ is the normal fan of a Newton $\QQ$-polytope $\Upgamma$, note that a cone $\sigma \in \overline{\Sigma}$ is not contained in any coordinate hyperplane in $N_\RR$ if and only if $\bigcap\{\tau_\rho \colon \rho \in \sigma[1]\} \prec \Upgamma$ is compact. Indeed, if $\sigma \in \Sigma$, we have $\bigcap\{\tau_\rho \colon \rho \in \sigma[1]\} = \varsigma_\sigma$ (cf. Corollary~\ref{C:cone-face-correspondence-A}), so the assertion follows from Corollary~\ref{C:cone-face-correspondence-B}. Otherwise, let $\sigma'$ be the \emph{smallest} cone in $\Sigma$ such that $\sigma \sqsubset \sigma'$. Then $\bigcap\{\tau_\rho \colon \rho \in \sigma[1]\} = \bigcap\{\tau_\rho \colon \rho \in \sigma'[1]\} = \varsigma_{\sigma'}$ (cf. \ref{X:cone-face-correspondence} and Corollary~\ref{C:cone-face-correspondence-A}), so the assertion still follows from Corollary~\ref{C:cone-face-correspondence-B}.
\end{xpar}

\subsection*{Appendix to \S\ref{2.3}}

In this appendix, we sketch (without proof) how the multi-weighted blow-ups in Definition~\ref{D:multi-weighted-blow-up} can indeed be interpreted as blow-ups on $\AA^n$. Recall that a {\sffamily Rees algebra} on $\AA^n$ is simply a finitely generated, $\NN$-graded $\sO_{\AA^n}$-subalgebra $\fa_\bullet = \bigoplus_{m \in \NN}{\fa_m \cdot t^m} \; \subset \; \sO_{\AA^n}[t]$ such that $\fa_0 = \sO_{\AA^n}$ and $\fa_m \supset \fa_{m+1}$ for every $m \in \NN$. We say $\fa_\bullet$ is {\sffamily monomial} if for each $m \in \NN$, $\fa_m$ is a monomial ideal of $\kk[x_1,\dotsc,x_n]$, and we also say that $\fa_\bullet$ is {\sffamily integrally closed} if it is integrally closed in $\sO_{\AA^n}[t]$. Then:

\begin{xpar}
Definition~\ref{D:multi-weighted-blow-up} should be understood via a correspondence between: \[
    \bigl\{\textrm{Newton $\QQ$-polyhedra $\Upgamma$ in $M_\RR^+$}\bigr\} \;\longleftrightarrow\; \left\{\parbox{5cm}{integrally closed, monomial Rees algebras $\fa_\bullet$ on $\AA^n$}\right\}.
\]
to be explicated in \ref{X:correspondence}. Namely, for a Newton $\QQ$-polyhedron $\Upgamma$ with normal fan $\Sigma$ and corresponding integrally closed, monomial Rees algebra $\fa_\bullet$ on $\AA^n$, fix a sufficiently large $\ell \in \ChMingHao{\ZZ_{>0}}$ such that the $\ell$\textsuperscript{th} Veronese subalgebra $\fa_{\ell\bullet}$ of $\fa_\bullet$ is generated in degree $1$. Then the multi-weighted blow-up $\Uppi_\Sigma \colon \sX_\Sigma \to \AA^n$ along $\Upgamma$ is the same as the {\sffamily multi-weighted blow-up of $\AA^n$ along $\fa_\ell$}, in the sense of \cite[Definition 3.1.1]{abramovich-quek-multi-weighted-resolution}, and the morphism $\uppi_\Sigma \colon X_\Sigma \to \AA^n$ in \ref{X:subdivision-toric-morphism} is also the schematic blow-up of $\AA^n$ along $\fa_\ell$. 
\end{xpar}

\begin{xpar}\label{X:correspondence}
The above correspondence can be sketched as follows. Given a Newton $\QQ$-polyhedron $\Upgamma$ in $M_\RR^+$, the corresponding integrally closed, monomial Rees algebra $\fa_\bullet$ on $\AA^n$ is given by \[
    \fa_\bullet \; = \; \left\{\pmb{x}^\ba \cdot t^\ell \colon \frac{1}{\ell}\ba \in \Upgamma\right\}
\]
or equivalently, the integral closure in $\sO_{\AA^n}[t]$ of the $\sO_{\AA^n}$-subalgebra generated by the finite set $\bigl\{\pmb{x}^{\ell(\ba)\ba} \cdot t^{\ell(\ba)} \colon \ba \in \vertex(\Upgamma)\bigr\}$, where $\ell(\ba) := \min\{\ell \in \ChMingHao{\ZZ_{>0}} \colon \ell\ba \in M^+\}$ for every $\ba \in \vertex(\Upgamma)$. Conversely, given an integrally closed, monomial Rees algebra $\fa_\bullet$ on $\AA^n$, let $\pmb{x}^{\ba_i} \cdot t^{\ell_i}$ for $i=1,2,\dotsc,r$ be generators of $\fa_\bullet$ as a $\sO_{\AA^n}$-algebra, and the corresponding Newton $\QQ$-polyhedron $\Upgamma$ is \[
    \Upgamma \; = \; \left\{\frac{1}{\ell}\ba \colon \pmb{x}^\ba \cdot t^\ell \in \fa_\bullet\right\}
\]
or equivalently, the convex hull in $M_\RR$ of $\bigcup\bigl\{\frac{1}{\ell_i}\ba_i + M_\RR^+ \colon i \in [r]\bigr\}$. In particular, the above correspondence in particular restricts to a correspondence that is perhaps more familiar to the reader: \[
    \bigl\{\textrm{Newton polyhedra in $M_\RR^+$}\bigr\} \; \xleftrightarrow{\quad\quad} \; \left\{\parbox{5cm}{integrally closed monomial ideals $\fa \subset \kk[x_1,\dotsc,x_n]$}\right\}.
\]
Since these claims are not needed for this paper, we omit their proofs.
\end{xpar}

\section{Preliminaries and examples}

\subsection{A stack-theoretic re-interpretation of a classical embedded desingularization of non-degenerate polynomials}\label{3.1}

We return to the setting at the start of \S\ref{1.1}: namely, let $f = \sum_{\ba \in \NN^n}{c_\ba \cdot \pmb{x}^\ba} \in \kk[x_1,\dotsc,x_n]$ be a non-degenerate polynomial, and let $\Upgamma(f)$ denote the Newton polyhedron of $f$. Let $\Sigma(f)$ denote the normal fan of $\Upgamma(f)$, cf. \S\ref{2.2}. 

It is known in the literature that one can construct, using $\Sigma(f)$, an embedded desingularization of $V(f) \subset \AA^n$ \ChMingHao{above $\bzero \in \AA^n$ (in fact, more is true, cf. the next theorem)}. This construction manifests in various equivalent forms in the literature, e.g. in Varchenko \cite[\S 10]{varchenko-monodromy-and-newton-diagram} and more recently, in Bultot--Nicaise \cite[Proposition 8.31]{bultot-nicaise-log-smooth-models} and Abramovich--Quek \cite[Theorem 5.1.2]{abramovich-quek-multi-weighted-resolution}. As motivated in the introduction (cf. \ref{X:approach-1}, \ref{X:approach-2}, \ref{X:approach-3}), we follow the last approach. Indeed, by following the description in \ref{X:explicating-multi-weighted-blow-up}, the proof of \cite[Theorem 5.1.2]{abramovich-quek-multi-weighted-resolution} shows:

\begin{theorem}\label{T:toric-desingularization}
The multi-weighted blow-up of $\AA^n$ along $\Upgamma(f)$: \[
    \Uppi_{\Sigma(f)} \; \colon \; \sX_{\Sigma(f)} \; \xrightarrow{\quad\quad} \; \AA^n
\]
is a stack-theoretic embedded desingularization of \ChMingHao{$V(f) \cup V(x_1x_2 \dotsm x_n) \subset \AA^n$} above the origin $\bzero \in \AA^n$.
\end{theorem}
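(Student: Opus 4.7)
The plan is to verify the two conditions of Definition~\ref{D:stack-desingularization} for $\Uppi_{\Sigma(f)}$, exploiting the explicit chart-wise description in \ref{X:explicating-multi-weighted-blow-up}. Condition (i) is already essentially built into Cox's construction from \ref{X:cox-construction}: by definition $\widehat{\Sigma(f)}$ is a smooth fan, so $X_{\widehat{\Sigma(f)}}$ --- and hence its stack quotient $\sX_{\Sigma(f)} = [X_{\widehat{\Sigma(f)}} \q \GG_\beta]$ --- is smooth; the good moduli space is $X_{\Sigma(f)}$; and since $\Sigma(f)$ is a refinement of $\Sigma_\std$ with $\abs{\Sigma(f)} = N_\RR^+$, the induced toric morphism $\uppi_{\Sigma(f)} \colon X_{\Sigma(f)} \to \AA^n$ is proper and birational by \cite[Theorem 3.4.7]{cox-little-schenck-toric-varieties}.

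The substance therefore lies in condition (ii). By Corollary~\ref{C:preimage-of-origin}, I only need to check SNC-ness near each stratum $O(\sigma') \subset \sX_{\Sigma(f)}$ indexed by a cone $\sigma' \in \overline{\Sigma(f)}$ not contained in any coordinate hyperplane of $N_\RR$. Such a stratum sits inside the chart $D_+(\sigma)$ for any maximal $\sigma \sqsupset \sigma'$. On that chart, formula \ref{X:explicating-multi-weighted-blow-up}(i) gives
\[
    \Uppi_{\Sigma(f)}^\#(f) \; = \; \sum_{\ba \,:\, c_\ba \neq 0} c_\ba \prod_{\rho \in \Sigma(f)[1]} (x'_\rho)^{\ba \cdot \bu_\rho}.
\]
Because every $\ba$ in the support of $f$ lies in $\Upgamma_+(f)$, we have $\ba \cdot \bu_\rho \geq \varphi(\bu_\rho) = N_{\tau_\rho}$ for every ray $\rho$, and I can factor
\[
    \Uppi_{\Sigma(f)}^\#(f) \; = \; \biggl(\prod_{\rho \in \sigma[1]} (x'_\rho)^{N_{\tau_\rho}}\biggr) \cdot \tilde{f}_\sigma,
\]
displaying $\Uppi_{\Sigma(f)}^{-1}\bigl(V(f)\bigr)$ locally as the union of the exceptional coordinate hyperplanes $V(x'_\rho)$ for $\rho \in \sigma[1]$ (automatically SNC) and a strict transform $V(\tilde{f}_\sigma)$.

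The decisive step is then to show $V(\tilde{f}_\sigma)$ is smooth along $O(\sigma')$ and meets the exceptional hyperplanes transversally there. Let $\varsigma := \bigcap\{\tau_\rho : \rho \in \sigma'[1]\}$, which by Remark~\ref{R:preimage-of-origin} is a \emph{compact} face of $\Upgamma_+(f)$. Splitting the sum defining $\tilde{f}_\sigma$ by whether $\ba \in \varsigma$ or not, every term with $\ba \notin \varsigma$ still carries a positive power of some $x'_\rho$ with $\rho \in \sigma'[1]$ and hence vanishes on $O(\sigma')$, while the remaining terms assemble --- up to an invertible monomial in the $x'_\rho$ with $\rho \notin \sigma[1]$ --- into the pullback of the face polynomial $f_\varsigma$ to the torus orbit parameterising $O(\sigma')$. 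Non-degeneracy of $f$ says exactly that $V(f_\varsigma)$ is smooth in $\Gm^n$, which, after this identification, yields smoothness of $V(\tilde{f}_\sigma)$ along $O(\sigma')$; transversality with the $V(x'_\rho)$ for $\rho \in \sigma[1]$ is automatic, since these provide independent local coordinates normal to $O(\sigma')$ and do not appear in the face-polynomial Jacobian. The main obstacle is the careful bookkeeping in this last step --- matching the $x'_\rho$ for $\rho \notin \sigma[1]$ with coordinates on the torus orbit of $\varsigma$, and converting torus-smoothness of $V(f_\varsigma)$ into a Jacobian statement for $\tilde{f}_\sigma$ on $O(\sigma')$ --- after which SNC-ness along $\Uppi_{\Sigma(f)}^{-1}(\bzero)$ follows.
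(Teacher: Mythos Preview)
Your argument is correct and matches the paper's approach: the paper defers the proof of this particular theorem to \cite[Theorem 5.1.2]{abramovich-quek-multi-weighted-resolution}, but your chart-wise computation --- reducing via Corollary~\ref{C:preimage-of-origin} to orbits $O(\sigma')$, restricting the proper transform to the compact-face polynomial $f_\varsigma$, and invoking non-degeneracy --- is exactly the computation carried out later in \underline{\emph{Case A}} (\ref{X:case-A}) of the proof of Theorem~\ref{T:desingularization-new}, which specializes to the present statement when $\dB = \varnothing$ (so every cone is old).
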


\begin{xpar}\label{X:toric-desingularization}
\ChMingHao{This means that $\Uppi_{\Sigma(f)}^{-1}\bigl(V(f) \cup V(x_1x_2 \dotsm x_n)\bigr)$} is a simple normal crossings divisor at \ChMingHao{every point} in $\Uppi_{\Sigma(f)}^{-1}(\bzero)$. To explicate this, we note from \ref{X:explicating-multi-weighted-blow-up}(i) that: \[
    \Uppi_{\Sigma(f)}^\#(f) \; = \; \sum_{\ba \in \NN^n}{c_\ba \cdot (\pmb{x}')^\ba \cdot \prod_{\rho \in \Sigma(f)[\exc]}{(x_\rho')^{\ba \cdot \bu_\rho}}}
\]
where for each $\ba = (a_1,\dotsc,a_n) \in \NN^n$, $(\pmb{x}')^\ba := (x_1')^{a_1} \dotsm (x_n')^{a_n}$. Setting $N_\rho := N_{\tau_\rho} = \inf_{\ba \in \Upgamma(f)}{\ba \cdot \bu_\rho}$ for each $\rho \in \Sigma[1]$ (cf. \ref{X:alt-characterization}, \ref{X:convention-2.2-B}), \ChMingHao{we define} the proper transform of $f$ under $\Uppi_{\Sigma(f)}$ \ChMingHao{as}: \begin{equation}\label{EQ:toric-desingularization}
    f' \; := \; \frac{\Uppi_{\Sigma(f)}^\#(f)}{\prod_{\rho \in \Sigma(f)[1]}{(x_\rho')^{N_\rho}}} \; = \; \sum_{\ba \in \NN^n}{c_\ba \cdot (\pmb{x'})^{\ba-\bn} \cdot \prod_{\rho \in \Sigma(f)[\exc]}{(x_\rho')^{\ba \cdot \bu_\rho - N_\rho}}}
\end{equation}
where $\bn := \bigl(N_i \colon i \in [n]\bigr)$. \ChMingHao{In other words, $V(f') \subset \sX_{\Sigma(f)}$ is the proper transform of all irreducible components of $V(f) \subset \AA^n$ that are not contained in $V(x_1x_2\dotsm x_n) \subset \AA^n$. Note that since $f$ is non-degenerate, $V(f') \subset \sX_{\Sigma(f)}$ is reduced.} Then the preceding theorem is asserting that at \ChMingHao{every point} in $\Uppi_{\Sigma(f)}^{-1}(\bzero) \subset \sX_{\Sigma(f)}$, $V(f') \subset \sX_{\Sigma(f)}$ is smooth, and intersects the smooth divisors \ChMingHao{$\bigl\{V(x_\rho') \subset \sX_{\Sigma(f)} \colon \rho \in \Sigma(f)[1]\bigr\}$} transversely.
\end{xpar}

\begin{xpar}\label{X:motivic-change-of-variables}
We next claim that via an \emph{appropriate} motivic change of variables formula, the desingularization $\Uppi_{\Sigma(f)}$ of $V(f) \subset \AA^n$ \ChMingHao{above $\bzero \in \AA^n$} supplies a set of candidate poles for $Z_{\mot,\bzero}(f;s)$ given by \[
    \Uptheta(f) \; := \; \{-1\} \; \cup \; \bigl\{ s_\tau \colon \tau \prec^1 \Upgamma(f) \textrm{ with } N_\tau > 0 \bigr\} \qquad \textrm{\small{(cf. \eqref{EQ:ratio})}}.
\]
\ChMingHao{However, as it is, the stack $\sX_{\Sigma(f)}$ is typically not Deligne--Mumford, and the change of variables formula in \cite[Theorem 3.41]{yasuda-motivic-integration-deligne-mumford} only applies to $\Uppi_{\Sigma(f)} \colon \sX_{\Sigma(f)} \to \AA^n$ whenever $\sX_{\Sigma(f)}$ is Deligne--Mumford. Likewise, the good moduli space $X_{\Sigma(f)}$ of $\sX_{\Sigma(f)}$ typically has worse than finite quotient singularities, so the formula in \cite[Theorem 4]{veys-motivic-zeta-functions-on-Q-Gorenstein-varieties} does not apply to $\uppi_{\Sigma(f)} \colon X_{\Sigma(f)} \to \AA^n$. 

Nevertheless, there are other formulae in the literature that do apply directly to our context, e.g. \cite[Theorem 5.3.1]{bultot-nicaise-log-smooth-models} or \cite[Theorem 1.3]{satriano-usatine-motivic-integration-Artin-stacks}, although these either demand some background on logarithmic geometry, or background on stacks. Instead, we adopt a more direct and self-contained approach. As already hinted in \ref{X:approach-4}, we circumvent the earlier issue by passing to a further modification that is now a stack-theoretic embedded desingularization of $V(f) \subset \AA^n$ above $\bzero \in \AA^n$ \emph{by a Deligne--Mumford stack}. Namely, this is given by the fantastack $\Uppi_{\pmb{\Upsigma}(f)} \colon \sX_{\pmb{\Upsigma}(f)} \to \AA^n$ associated to any simplicial subdivision $\pmb{\Upsigma}(f)$ of the fan $\Sigma(f)$ that does \emph{not} involve the \emph{addition of new rays}.}
\end{xpar}

\begin{xpar}[Frugal simplicial subdivisions]\label{X:simplicial-refinement}
From \ref{X:simplicial-refinement} to \ref{X:finite-quotient-singularities}, let $\Sigma$ be a fan in $N_\RR$ whose support $\abs{\Sigma}$ is $N_\RR^+$, and we fix a subdivision $\pmb{\Upsigma}$ of $\Sigma$ such that: \begin{enumerate}
    \item $\pmb{\Upsigma}$ is a simplicial fan, i.e. every cone $\pmb{\upsigma}$ in $\pmb{\Upsigma}$ is a simplicial cone.
    \item Every cone $\pmb{\upsigma}$ in $\pmb{\Upsigma}$ can be inscribed in some cone $\sigma$ in $\Sigma$ (in which case one writes $\sigma \sqsubset \sigma'$), cf. \ref{X:cox-construction}.
\end{enumerate}
Such a $\pmb{\Upsigma}$ always exists by \cite[Lemma 2.8]{denef-hoornaert-newton-polyhedra}, and we call any such $\pmb{\Upsigma}$ a {\sffamily frugal simplicial subdivision} of $\Sigma$. Note too that $\pmb{\Upsigma}[1] = \Sigma[1]$.
\end{xpar}

\begin{xpar}\label{X:open-immersion-of-fantastacks}
Let $(\widehat{\Sigma},\beta)$ denote the stacky fan associated to $\Sigma$ in $N_\RR$, cf. \ref{X:cox-construction}. Since $\pmb{\Upsigma}[1] = \Sigma[1]$, the stacky fan associated to $\widehat{\Sigma}$ is of the form $(\widehat{\pmb{\Upsigma}},\beta)$ for the same homomorphism $\beta \colon \ZZ^{\Sigma[1]} = \widehat{N} \to N = \ZZ^n$ appearing in $(\widehat{\Sigma},\beta)$. Moreover, $\widehat{\pmb{\Upsigma}}$ is a sub-fan of $\widehat{\Sigma}$. Indeed, recall from \ref{X:cox-construction} that $\widehat{\pmb{\Upsigma}}$ is generated by $\{\widehat{\pmb{\upsigma}} \colon \pmb{\upsigma} \in \pmb{\Upsigma}\}$, where for every cone $\pmb{\upsigma}$ in $\pmb{\Upsigma}$, \[
    \widehat{\pmb{\upsigma}} \; = \; \ChMingHao{\bigl\langle} \be_\rho \colon \rho \in \pmb{\upsigma}[1] \ChMingHao{\bigr\rangle} \; \subset \; \ZZ^{\pmb{\Upsigma}[1]} \; = \; \widehat{N}.
\]
If $\sigma$ is a cone in $\Sigma$ such that $\pmb{\upsigma} \sqsubset \sigma$, $\widehat{\pmb{\upsigma}}$ is then a face of the cone $\widehat{\sigma} = \ChMingHao{\langle} \be_\rho \colon \rho \in \sigma[1]\ChMingHao{\rangle}$ in $\widehat{\Sigma}$, and hence is in $\widehat{\Sigma}$, as desired. Consequently, the toric morphism induced by the inclusion $\widehat{\pmb{\Upsigma}} \subset \widehat{\Sigma}$ is a $\GG_\beta$-equivariant open immersion $X_{\widehat{\pmb{\Upsigma}}} \hookrightarrow X_{\widehat{\Sigma}}$, which descends to the open immersion of stacks in the following commutative diagram: \begin{equation}\label{EQ:open-immersion-of-fantastacks}
    \begin{tikzcd}
    \sX_{\pmb{\Upsigma}} \; = \; \left[X_{\widehat{\pmb{\Upsigma}}} \q \GG_\beta\right] \arrow[to=1-3, hookrightarrow, "\textrm{open}"] \arrow[to=2-2, swap, "\Uppi_{\pmb{\Upsigma}}"] & & \left[X_{\widehat{\Sigma}} \q \GG_\beta\right] = \; \sX_\Sigma \arrow[to=2-2, "\Uppi_\Sigma"] \\
    & \AA^n &
    \end{tikzcd}
\end{equation}
Explicitly, adopting the notations in the description of $\Uppi_\Sigma \colon \sX_\Sigma \to \AA^n$ in \ref{X:explicating-multi-weighted-blow-up}, the open immersion $\sX_{\pmb{\Upsigma}} \inj \sX_\Sigma$ identifies the former with the following open substack of the latter:
\[
    \sX_{\pmb{\Upsigma}} \; = \; \bigcup\bigl\{D_+(\pmb{\upsigma}) \colon \pmb{\upsigma} \in \pmb{\Upsigma}[\max]\bigr\} \; \subset \; \sX_\Sigma
\]
where for each $\pmb{\upsigma} \in \pmb{\Upsigma}[\max]$,  we set \[
    x_{\pmb{\upsigma}}' \; := \; \prod_{\rho \in \Sigma[1] \smallsetminus \pmb{\upsigma}[1]}{x_\rho'}
\]
and \begin{equation}\label{EQ:pmb-upsigma-chart}
    D_+(\pmb{\upsigma}) \; := \; \left[\Spec\bigl(\kk[x_1',\dotsc,x_n']\bigl[x_\rho' \colon \rho \in \Sigma[\exc]\bigr]\bigl[x_{\pmb{\upsigma}}'^{-1}\bigr]\bigr) \q \Gm^{\Sigma[\exc]}\right] \; \subset \; \sX_\Sigma
\end{equation}
is the $\pmb{\upsigma}$-chart of $\sX_{\pmb{\Upsigma}}$ (\ref{X:explicating-multi-weighted-blow-up}(ii)). Note too that for every $\sigma$ in $\Sigma$ and $\pmb{\upsigma}$ in $\pmb{\Upsigma}$ such that $\pmb{\upsigma} \sqsubset \sigma$, we have $D_+(\pmb{\upsigma}) \subset D_+(\sigma)$, since $x_\sigma'$ divides $x_{\pmb{\upsigma}}'$.
\end{xpar}

\begin{xpar}\label{X:finite-quotient-singularities}
Since $\pmb{\Upsigma}$ is a simplicial fan, $\sX_{\pmb{\Upsigma}}$ has finite stabilizers, i.e. $X_{\pmb{\Upsigma}}$ has finite quotient singularities. While this assertion is classical in toric geometry \cite[Theorem 11.4.8]{cox-little-schenck-toric-varieties}, we will need, for each $\pmb{\upsigma} \in \pmb{\Upsigma}[\max]$, an explicit presentation of the $\sigma$-chart $D_+(\pmb{\upsigma}) \subset \sX_{\pmb{\Upsigma}}$ as the stack quotient of a smooth $\kk$-scheme by an action of a finite abelian group. This presentation will be used later in \ref{X:relative-canonical-divisor}.

Let us continue from the expression in \eqref{EQ:pmb-upsigma-chart}. Firstly, since $x_\rho'$ is invertible on $D_+(\pmb{\upsigma})$ for $\rho \in \Sigma[\exc] \smallsetminus \pmb{\upsigma}[1]$, and their $\ZZ^{\Sigma[\exc]}$-weights $\{-\be_\rho \colon \rho \in \Sigma[\exc] \smallsetminus \pmb{\upsigma}[1]\}$ are linearly independent over $\ZZ$, we observe from \cite[Lemma 1.3.1]{quek-rydh-weighted-blow-up} that by setting \[
    x_\rho' \; = \; 1 \qquad  \textrm{ for every } \rho \in \Sigma[\exc] \smallsetminus \pmb{\upsigma}[1]
\]
we obtain an isomorphism \begin{equation}\label{EQ:first-isom}
    D_+(\pmb{\upsigma}) \; = \; \left[\Spec\bigl(\kk[x_1',\dotsc,x_n']\bigl[x_\rho' \colon \rho \in \pmb{\upsigma}[\exc]\bigr]\bigl[x_{\pmb{\upsigma}}'^{-1}\bigr]\bigr) \q \Gm^{\pmb{\upsigma}[\exc]}\right]
\end{equation}
where: \begin{enumerate}
    \item $\pmb{\upsigma}[\exc] := \Sigma[\exc] \cap \pmb{\upsigma}[1]$.
    \item $x_{\pmb{\upsigma}}'$ becomes $\prod_{i \in [n]\smallsetminus \pmb{\upsigma}[1]}{x_i'}$.
    \item The action $\Gm^{\pmb{\upsigma}[\exc]} \curvearrowright \Spec(\kk[x_1',\dotsc,x_n'][x_\rho' \colon \rho \in \pmb{\upsigma}[\exc]][x_{\pmb{\upsigma}}'^{-1}])$ is as follows. For each $i \in [n]$, the $\ZZ^{\pmb{\upsigma}[\exc]}$-weight of $x_i'$ is $(u_{\rho,i})_{\rho \in \pmb{\upsigma}[\exc]}$, and for each $\rho \in \pmb{\upsigma}[\exc]$, the $\ZZ^{\pmb{\upsigma}[\exc]}$-weight of $x_\rho'$ is $-\be_\rho$.
\end{enumerate}
Secondly, since $\pmb{\upsigma}$ is simplicial, \[
    \bigl\{\bu_\rho \colon \rho \in \pmb{\upsigma}[1]\bigr\} \; = \; \bigl\{\be_i \colon i \in [n] \cap \pmb{\upsigma}[1]\bigr\} \; \sqcup \; \bigl\{\bu_\rho \colon \rho \in \pmb{\upsigma}[\exc]\bigr\}
\]
is linearly independent, and hence, so is \begin{equation}\label{EQ:lin-indep-set}
    \left\{(u_{\rho,i})_{i \in [n] \smallsetminus \pmb{\upsigma}[1]} \; = \; \bu_\rho \; - \; \ChMingHao{\sum_{i \in [n] \cap \pmb{\upsigma}[1]}{u_{\rho,i}\be_i}} \; \colon \; \rho \in \pmb{\upsigma}[\exc]\right\}.
\end{equation}
Moreover, since $\dim(\pmb{\upsigma}) = n$, we have $\#\pmb{\upsigma}[1] = n$, so that: \begin{align*}
    \#\pmb{\upsigma}[\exc] + n \; &= \; \#\pmb{\upsigma}[\exc] \; + \; \#\bigl([n] \cap \pmb{\upsigma}[1]\bigr) \; + \; \#\bigl([n] \smallsetminus \pmb{\upsigma}[1]\bigr) \\
    &= \; \#\pmb{\upsigma}[1] \; + \; \#\bigl([n] \smallsetminus \pmb{\upsigma}[1]\bigr) \; = \; n \; + \; \#\bigl([n] \smallsetminus \pmb{\upsigma}[1]\bigr)
\end{align*}
i.e. $\#([n] \smallsetminus \pmb{\upsigma}[1]) = \#\pmb{\upsigma}[\exc]$. Consequently, the vectors in \eqref{EQ:lin-indep-set} are the columns of an invertible square matrix $\widetilde{\bB}$ of order $\#\pmb{\upsigma}[\exc]$, which implies that the set of rows of $\widetilde{\bB}$: \[
    \bigl\{ (u_{\rho,i})_{\rho \in \pmb{\upsigma}[\exc]} \colon i \in [n] \smallsetminus \pmb{\upsigma}[1] \bigr\} \; = \; \bigl\{\ZZ^{\pmb{\upsigma}[\exc]}\textrm{-weights of } x_i' \colon i \in [n] \smallsetminus \pmb{\upsigma}[1]\bigr\}
\]
is linearly independent. Together with the fact that $x_i'$ is invertible on $D_+(\pmb{\upsigma})$ for $i \in [n] \smallsetminus \pmb{\upsigma}[1]$, we observe again from \cite[Lemma 1.3.1]{quek-rydh-weighted-blow-up} that by setting \[
    x_i' \; = \; 1 \qquad \textrm{for every } i \in [n] \smallsetminus \pmb{\upsigma}[1]
\]
in \eqref{EQ:first-isom}, we obtain an isomorphism \begin{equation}\label{EQ:second-isom}
    D_+(\pmb{\upsigma}) \; = \; \bigl[\Spec\bigl(\kk\bigl[x_\rho' \colon \rho \in \pmb{\upsigma}[1]\bigr]\bigr) \q \Gmu\bigr]
\end{equation}
where: \begin{enumerate}
    \item $\Gmu := \Hom_{\GrpSch}(A,\Gm)$, where $A$ is the finite abelian group \[
        \qquad A \; := \; \frac{\ZZ^{\pmb{\upsigma}[\exc]}}{\bigl\langle (u_{\rho,i})_{\rho \in \pmb{\upsigma}[\exc]} \colon i \in [n] \smallsetminus \pmb{\upsigma}[1] \bigr\rangle}.
    \]
    \item Letting $\overline{(-)}$ denote the quotient $\ZZ^{\pmb{\upsigma}[\exc]} \twoheadrightarrow A$, we specify the action $\Gmu \curvearrowright \Spec(\kk[x_\rho' \colon \rho \in \pmb{\upsigma}[1]])$ as follows. If $i \in [n] \cap \pmb{\upsigma}[1]$, the $A$-weight of $x_i'$ is $\overline{(u_{\rho,i})}_{\rho \in \pmb{\upsigma}[\exc]}$. If $\rho \in \pmb{\upsigma}[\exc]$, the $A$-weight of $x_\rho'$ is $-\overline{\be_\rho}$. 
\end{enumerate}
Since $\{D_+(\pmb{\upsigma}) \colon \pmb{\upsigma} \in \pmb{\Upsigma}[\max]\}$ covers $\sX_{\pmb{\Upsigma}}$, the expression in \eqref{EQ:second-isom} in particular shows that $\sX_{\pmb{\Upsigma}}$ has finite stabilizers.
\end{xpar}

\begin{xpar}\label{X:relative-canonical-divisor}
In this paragraph, we compute the relative canonical divisor $K_{\Uppi_{\pmb{\Upsigma}}}$ of $\Uppi_{\pmb{\Upsigma}}$. For each $\pmb{\upsigma} \in \pmb{\Upsigma}[\max]$, recall that the composition \[
    \Uppi_{\pmb{\Upsigma}(\pmb{\upsigma})} \; \colon \; D_+(\pmb{\upsigma}) \; \stackrel{\eqref{EQ:second-isom}}{\longeq} \; \bigl[\Spec\bigl(\kk\bigl[x_\rho' \colon \rho \in \pmb{\upsigma}[1]\bigr]\bigr) \q \Gmu\bigr] \; \xhookrightarrow{\;\textrm{open}\;} \; \sX_{\pmb{\Upsigma}} \; \xrightarrow{\quad\Uppi_{\pmb{\Upsigma}}\quad} \; \AA^n
\]
is induced by the $\kk$-algebra homomorphism \begin{align*}
    \Uppi_{\pmb{\Upsigma}}(\pmb{\upsigma})^\# \; \colon \; \kk[x_1,\dotsc,x_n] \; &\xrightarrow{\quad\quad} \; \kk\bigl[x_\rho' \colon \rho \in \pmb{\upsigma}[1]\bigr] \\
    x_i \; &\xmapsto{\quad\quad} \; \prod_{\rho \in \pmb{\upsigma}[1]}{(x_\rho')^{u_{\rho,i}}} \; =: \; \alpha_i.
\end{align*}
We then compute, for each $i \in [n]$: \begin{equation}\label{EQ:pullback-of-dx_i}
    \Uppi_{\pmb{\Upsigma}}(\pmb{\upsigma})^\ast\bigl(dx_i\bigr) \; = \; \sum_{\rho \in \pmb{\upsigma}[1]}{\frac{u_{\rho,i}\alpha_i}{x_\rho'} \cdot dx_\rho'}.
\end{equation}
Letting $\fS([n],\pmb{\upsigma}[1])$ denote the set of bijections $\theta \colon [n] \xrightarrow{\simeq} \pmb{\upsigma}[1]$, we have: \begin{align*}
    &\Uppi_{\pmb{\Upsigma}}(\pmb{\upsigma})^\ast\bigl(dx_1 \wedge dx_2 \wedge \dotsb \wedge dx_n\bigr) \\
    &\stackrel{\ChMingHao{\eqref{EQ:pullback-of-dx_i}}}{\longeq} \; \sum_{\theta \in \fS([n],\pmb{\upsigma}[1])}{\prod_{i \in [n]}{\frac{u_{\theta(i),i}\alpha_i}{x_{\theta(i)}'}} \cdot dx_{\theta(1)}' \wedge dx_{\theta(2)}' \wedge \dotsb \wedge dx_{\theta(n)}'} \\
    &\longeq \; \frac{\prod_{i \in [n]}{\alpha_i}}{\prod_{\rho \in \pmb{\upsigma}[1]}{x_\rho'}} \cdot \left(\sum_{\theta \in \fS([n],\pmb{\upsigma}[1])}{\prod_{i \in [n]}}{u_{\theta(i),i}} \cdot dx_{\theta(1)}' \wedge dx_{\theta(2)}' \wedge \dotsb \wedge dx_{\theta(n)}'\right) \\
    &\longeq \; \prod_{\rho \in \pmb{\upsigma}[1]}{(x_\rho')^{\abs{\bu_\rho}-1}} \cdot \bigl( \det(\bB_{\pmb{\upsigma}}) \cdot \wedge_{\rho \in \pmb{\upsigma}[1]}{dx_\rho'} \bigr)
\end{align*}
where: \begin{enumerate}
    \item $\abs{\bu_\rho} := \uu_{\rho,1}+\uu_{\rho,2}+\dotsb+\uu_{\rho,n}$,
    \item $\bB_{\pmb{\upsigma}}$ denotes the square matrix of order $n$ whose $\rho$\textsuperscript{th} column is the vector $\bu_\rho$ for $\rho \in \pmb{\upsigma}[1]$, \ChMingHao{which is} invertible since $\pmb{\upsigma}$ is simplicial,
    \item $\wedge_{\rho \in \pmb{\upsigma}[1]}{dx_\rho'} := dx_{\uptheta(1)}' \wedge dx_{\uptheta(2)}' \wedge \dotsb \wedge dx_{\uptheta(n)}'$ for a \underline{fixed} $\uptheta \in \fS([n],\pmb{\upsigma}[1])$.
\end{enumerate}
 From the above computation, we obtain \[
    K_{\Uppi_{\pmb{\Upsigma}}}|_{D_+(\pmb{\upsigma})} \; = \; \sum_{\rho \in \pmb{\upsigma}[1]}{\bigl(\abs{\bu_\rho}-1\bigr) \cdot V(x_\rho')}.
\]
Finally, since $\{D_+(\pmb{\upsigma}) \colon \pmb{\upsigma} \in \pmb{\Upsigma}[\max]\}$ is an open cover of $\sX_{\pmb{\Upsigma}}$, we deduce that \begin{equation}\label{EQ:relative-canonical-divisor}
    K_{\Uppi_{\pmb{\Upsigma}}} \; = \; \sum_{\rho \in  \Sigma[1]}{\bigl(\abs{\bu_\rho}-1\bigr) \cdot V(x_\rho')} .
\end{equation}
\end{xpar}

\begin{xpar}\label{X:motivic-change-of-variables-II}
Returning to our claim in \ref{X:motivic-change-of-variables}, fix a frugal simplicial subdivision $\pmb{\Upsigma}(f)$ of the normal fan $\Sigma(f)$. We then have: \[
    \begin{tikzcd}
    \Uppi_{\pmb{\Upsigma}(f)}^{-1}\bigl(V(f)\bigr) \arrow[to=2-1, twoheadrightarrow, swap, "\textrm{coarse space}"] \arrow[to=1-2, "\textrm{closed}", hookrightarrow] & \sX_{\pmb{\Upsigma}(f)} \arrow[to=2-2, swap, "\textrm{coarse space}", twoheadrightarrow] \arrow[to=1-3, hookrightarrow, "\textrm{open}"] & \sX_{\Sigma(f)} \arrow[to=1-4, "\Uppi_{\Sigma(f)}"] & \AA^n \\
    \uppi_{\pmb{\Upsigma}(f)}^{-1}\bigl(V(f)\bigr) \arrow[to=2-2, "\textrm{closed}", hookrightarrow] & X_{\pmb{\Upsigma}(f)} \arrow[to=1-4, "\uppi_{\Sigma(f)}", swap, bend right=15]
    \end{tikzcd}
\]
where: \begin{enumerate}
    \item $\uppi_{\pmb{\Upsigma}(f)}$ is proper and birational.
    \item $X_{\pmb{\Upsigma}(f)}$ has finite quotient singularities (\ref{X:finite-quotient-singularities}).
    \item $\uppi_{\pmb{\Upsigma}(f)}^{-1}\bigl(V(f)\bigr)$ is a $\QQ$-simple normal crossings divisor \ChMingHao{\cite[Definition 1.6]{veys-motivic-zeta-functions-on-Q-Gorenstein-varieties}} at \ChMingHao{every point} in $\uppi_{\pmb{\Upsigma}(f)}^{-1}(\bzero) \subset X_{\pmb{\Upsigma}(f)}$. Indeed, $\Uppi_{\pmb{\Upsigma}(f)}$ \ChMingHao{factors as $\sX_{\pmb{\Upsigma}(f)} \xhookrightarrow{\textrm{open}} \sX_{\Sigma(f)} \xrightarrow{\Uppi_{\Sigma(f)}} \AA^n$ in the above diagram}. We therefore deduce, from \eqref{EQ:toric-desingularization}, that: \begin{equation}\label{EQ:motivic-change-of-variables-II}
        \qquad \qquad \Uppi_{\pmb{\Upsigma}(f)}^{-1}\bigl(V(f)\bigr) \; = \; V(f') + \sum_{\rho \in \Sigma(f)[1]}{N_\rho \cdot V(x_\rho')}
    \end{equation}
    where each $V(x_\rho')$, as well as $V(f')$, is now regarded as a divisor in $\sX_{\pmb{\Upsigma}(f)} \xhookrightarrow{\textrm{open}} \sX_{\Sigma(f)}$. By Theorem~\ref{T:toric-desingularization}, $\Uppi_{\pmb{\Upsigma}(f)}^{-1}\bigl(V(f)\bigr)$ is a simple normal crossings divisor at \ChMingHao{every point} in $\ChMingHao{\Uppi_{\pmb{\Upsigma}(f)}^{-1}(\bzero) \subset \sX_{\pmb{\Upsigma}(f)}}$. It remains to note that $\uppi_{\pmb{\Upsigma}(f)}^{-1}\bigl(V(f)\bigr)$ is the coarse space of $\Uppi_{\pmb{\Upsigma}(f)}^{-1}\bigl(V(f)\bigr)$, since the coarse space $\sX_{\pmb{\Upsigma}(f)} \to X_{\pmb{\Upsigma}(f)}$ maps the latter onto the former.
\end{enumerate}
That is, $\uppi_{\pmb{\Upsigma}(f)} \colon X_{\pmb{\Upsigma}(f)} \to \AA^n$ is an {\sffamily embedded $\QQ$-desingularization of $V(f) \subset \AA^n$ above the origin $\bzero \in \AA^n$}, in the sense that it satisfies (i), (ii) and (iii) above.

We additionally note that the motivic change of variables formula in \cite[Theorem 4]{veys-motivic-zeta-functions-on-Q-Gorenstein-varieties} applies more generally for any embedded $\QQ$-desingularization $\uppi \colon Y \to X$ of $D_1+D_2 \subset X$ above a closed subscheme $W \subset X$: the proof in loc. cit. works verbatim, once one recognizes that: \begin{enumerate}
    \item[(a)] \cite[Theorem 2]{veys-motivic-zeta-functions-on-Q-Gorenstein-varieties} is a general change of variables rule for the $\QQ$-Gorenstein motivic zeta function via any proper and birational morphism of pure-dimensional $\QQ$-Gorenstein varieties.
    \item[(b)] After applying \cite[Theorem 2]{veys-motivic-zeta-functions-on-Q-Gorenstein-varieties}, the remainder of the proof of \cite[Theorem 4]{veys-motivic-zeta-functions-on-Q-Gorenstein-varieties} only uses the \ChMingHao{weaker hypothesis} that $\uppi^{-1}(D_1+D_2) \subset Y$ is a $\QQ$-simple normal crossings divisor at \ChMingHao{every point} in $\uppi^{-1}(W)$.
\end{enumerate}
We can therefore apply \cite[Theorem 4]{veys-motivic-zeta-functions-on-Q-Gorenstein-varieties} with $\uppi := \uppi_{\pmb{\Upsigma}(f)}$, $D_1 := V(f)$, $D_2 := 0$, and $W = \{\bzero\}$. Together with \eqref{EQ:relative-canonical-divisor} and \eqref{EQ:motivic-change-of-variables-II}, we deduce that $\Uptheta(f) = \{-1\} \cup \bigl\{-\frac{\abs{\bu_\rho}}{N_\rho} \colon \rho \in \Sigma(f)[1]$ with $N_\rho > 0\bigr\}$ is indeed a set of candidate poles for $Z_{\mot,\bzero}(f;s)$.
\end{xpar}

\subsection{A case study for Theorem~\ref{T:fake-poles}}\label{3.2}

\begin{xpar}\label{X:strategy}
In \S\ref{3.1} we explained why there is a set of candidate poles $\Uptheta(f)$ for $Z_{\mot,\bzero}(f;s)$ whose elements, with the \emph{possible} exception of $-1$, are naturally indexed by facets $\tau \prec^1 \Upgamma(f)$ satisfying $N_\tau > 0$. Namely, the preimage of $V(f) \subset \AA^n$ under the multi-weighted blow-up $\Uppi_{\Sigma(f)}$ of $\AA^n$ is a simple normal crossings divisor at every point above $\bzero \in \AA^n$, comprising of: \begin{enumerate}
    \item the proper transform \ChMingHao{of the irreducible components} of $V(f) \subset \AA^n$ \ChMingHao{that are not contained in $V(x_1x_2 \dotsm x_n) \subset \AA^n$}, 
    \item \ChMingHao{the proper transform of $V(x_i) \subset \AA^n$ for every $i \in [n]$ with $x_i \mid f$}, 
    \item and the irreducible exceptional divisors of $\Uppi_{\Sigma(f)}$,
\end{enumerate} 
\ChMingHao{where the irreducible components in (ii) and (iii) are naturally indexed by the facets $\tau \prec^1 \Upgamma(f)$ satisfying $N_\tau > 0$.}

It is therefore natural to imagine that a proof of Theorem~\ref{T:fake-poles} would involve showing that $V(f) \subset \AA^n$ is \emph{also} desingularized by the multi-weighted blow-up of $\AA^n$ along some Newton $\QQ$-polyhedron $\Upgamma^\dag$ obtained from $\Upgamma(f)$ by ``dropping the facets in $\dB$'' (cf. Theorem~\ref{T:refined-desingularization}). Ideally, one hopes that every supporting hyperplane of $\Upgamma(f)$, \emph{except} those intersecting $\Upgamma(f)$ in a face of some facet in $\dB$, should also be a supporting hyperplane of $\Upgamma^\dag$. In this section we show that this idea works for three non-degenerate polynomials.
\end{xpar}

\begin{example}\label{EX:x^2+xy^4+y^3z+z^3}
Let $f=x_1^2+x_1x_2^4+x_2^3x_3+x_3^3$. On the left side of the diagram below, we shaded the facets of $\Upgamma(f)$ that are not contained in any coordinate hyperplane $H_i$ in $M_\RR$. For now the red vertex and dashed lines, and the right side of the diagram, should be ignored. \[
	\begin{tikzpicture}[scale=0.675]

        %%% braces (experiment) %%%
        %\draw[decoration={brace,mirror}, decorate, thick] (3,1.5,0)--(0,3.5,0);
        %\draw[decoration={brace,mirror,raise=4ex}, decorate, thick] (4.5,0,0)--(0,3.5,0);
        %\node [position label, pos=0.5] {$\underline{\varsigma}^\circ$};
	
	%%% shade %%%
	\filldraw[lightgray] (0,0,4)--(3,1,0)--(4,0,2);
	\filldraw[gray] (0,0,4)--(0,3,0)--(3,1,0)--(0,0,4);
	\filldraw[lightgray] (6.3,0,2)--(4,0,2)--(3,1,0)--(5.3,1,0);
	
	%%% axes %%%	
	%\draw[->, thick] (3,0,0)--(4.5,0,0); %%% y-axes
	\draw[->, thick] (0,3,0)--(0,4.1,0); %%% z-axis
	\draw[->, thick] (0,0,4)--(0,0,7); % x-axis
	\draw[->, dashed] (0,0,0)--(0,0,4); % dashed x-axis
	\draw[<->, dashed] (0,4.1,0)--(0,0,0)--(6,0,0); %%% dashed z,y-axes
	
	%%% arrows/variables %%%
	\node at (6.4,-0.1,0) {$\be_2^\vee$};
	\node at (-0.5,4.1,0) {$\be_3^\vee$};
	\node at (-0.6,0,6.5) {$\be_1^\vee$};
	
	%%% plots on axes %%%
	\foreach \x in {1,...,5} % y-axis
	\draw (\x,0.1,0) -- (\x,-0.1,0);
	
	\foreach \x in {1,...,3} % z-axis
	\draw (0.1,\x,0) -- (-0.1,\x,0);
		
	\foreach \x in {2,4,6} % x-axis
	\draw (0,0.1,\x) -- (0,-0.1,\x);
	
	%%% plot lines %%%
	
	% y-z plane
	\foreach \x in {1,...,5}
	\draw[dotted] (\x,0,0) -- (\x,4.1,0);
	\foreach \x in {1,...,3}
	\draw[dotted] (0,\x,0) -- (6,\x,0);
	
	% x-y plane
	\foreach \x in {1,...,5}
	\draw[dotted] (\x,0,0) -- (\x,0,7);
	\foreach \x in {2,4,6}
	\draw[dotted] (0,0,\x) -- (6,0,\x);
	
	% x-z plane
	\foreach \x in {2,4,6}
	\draw[dotted] (0,0,\x) -- (0,4.3,\x);
	\foreach \x in {1,...,3}
	\draw[dotted] (0,\x,0) -- (0,\x,7.3);
	
	%%% lines %%%
	\draw[ultra thick] (0,0,6.8)--(0,0,4)--(3,1,0)--(4,0,2)--(0,0,4);
	\draw[ultra thick] (3,1,0)--(5.3,1,0);
	\draw[ultra thick] (4,0,2)--(6.3,0,2);
	\draw[ultra thick] (0,0,4)--(0,3,0)--(3,1,0);
	\draw[ultra thick] (0,3,0)--(0,4,0);
	\draw[dashed] (5.3,1,0)--(6.3,0,2);
	\draw[ultra thick, dashed, red] (0,0,6.8)--(0,0,4)--(0,3,0)--(3,1,0)--(4.5,0,0)--(0,0,4);
	\draw[ultra thick, dashed, red] (4.5,0,0)--(5.8,0,0);
	\draw[ultra thick, dashed, red] (0,3,0)--(0,4,0);

	%%% large dots %%%
	\filldraw[black] (0,0,4) circle (4.5pt);
	\filldraw[black] (3,1,0) circle (4.5pt);
	\filldraw[black] (4,0,2) circle (4.5pt);
	\filldraw[black] (0,3,0) circle (4.5pt);
	\filldraw[red] (4.5,0,0) circle (4.5pt);
	
	%%%%% TRANSITION TO SECOND DIAGRAM %%%%%%
	
    %%% shade %%%
    \filldraw[cyan] (14,3,0)--(13.053,2.263,1.684)--(11,4,5);
    \filldraw[magenta] (14,3,0)--(13,0,2)--(13.053,2.263,1.684)--(14,3,0);
    \filldraw[brown] (13,0,2)--(13.053,2.263,1.684)--(11,4,5)--(13,0,2);
    %\filldraw[magenta] (13.053,2.263,1.684)--(13.2,1.6,1.5)--(13.5,1.5,1)--(14,3,0);
    %\filldraw[brown] (13.053,2.263,1.684)--(13.2,1.6,1.5)--(13,0,2)--(11,4,5);
    %\filldraw[green] (13.5,1.5,1)--(13,0,2)--(13.2,1.6,1.5);
    
    %%% lines %%%
    \draw[ultra thick] (14,3,0)--(11,4,5)--(13,0,2)--(14,3,0);
    \draw[ultra thick] (14,3,0)--(13.053,2.263,1.684)--(11,4,5);
    \draw[ultra thick, dashed] (13.053,2.263,1.684)--(13,0,2);
    \draw[dotted, thick] (13.053,2.263,1.684)--(13.2,1.6,1.5)--(13.5,1.5,1);
    \draw[dotted, thick] (13.2,1.6,1.5)--(13,0,2);
    
    %%% large dots %%%
    \filldraw[black] (14,3,0) circle (4.5pt);
    \node at (14.2,3.3,-0.2) {$\be_1$};
    
    \filldraw[black] (11,4,5) circle (4.5pt);
    \node at (10.5,4.2,5.2) {$\be_2$};
    
    \filldraw[black] (13,0,2) circle (4.5pt);
    \node at (13.2,-0.4,2.2) {$\be_3$};
    
    \filldraw[black] (13.053,2.263,1.684) circle (4.5pt);
    \node at (12.9,2.7,1.684) {$\bu_1$};
    
    \filldraw[black] (13.2,1.6,1.5) circle (2.5pt);
    \node at (13.38,1.9,1.5) {\tiny{$\bu_2$}};
    
    \filldraw[black] (13.5,1.5,1) circle (2.5pt);
    \node at (13.9,1.5,1) {\tiny{$\bu_3$}};
    
    %%% nodes %%%
    %\node at (13.6,2.6,2.3) {$\pmb{y'z'}$};
    %\node at (14.5,3.4,7.3) {$\pmb{x'}$};
    %\node at (13.6,4.25,5.4) {$\pmb{z'u_2}$};
    
    \end{tikzpicture}
\]
Among the shaded facets, we used a \emph{darker} shade for the non--$B_1$-facet \[
    \tau_1 \; := \;\bigl\{\ba \in \Upgamma(f) \colon \ba \cdot \bu_1 = 18\bigr\} \qquad \textrm{where } \bu_1 \; := \; 9\be_1+4\be_2+6\be_3 
\]
with \ChMingHao{candidate pole} $-\frac{19}{18}$, and used a \emph{lighter} shade for the two $B_1$-facets \begin{align*}
    \tau_2 \; &:= \; \bigl\{\ba \in \Upgamma(f) \colon \ba \cdot \bu_2 = 8\bigr\} \qquad \textrm{where } \bu_2 \; := \; 4\be_1+\be_2+5\be_3 \\
    \tau_3 \; &:= \; \bigl\{\ba \in \Upgamma(f) \colon \ba \cdot \bu_3 = 1\bigr\} \qquad \textrm{where } \bu_3 \; := \; \be_1+\be_3
\end{align*}
with \ChMingHao{candidate poles} $-\frac{5}{4}$ and $-2$ respectively. Together $\tau_2$ and $\tau_3$ form a pair $\dB$ of adjacent $B_1$-facets of $\Upgamma(f)$ with consistent base direction $3$. Then Theorem~\ref{T:fake-poles} asserts that $\Uptheta^{\dag,\dB}(f) = \{-1,-\frac{19}{18}\} \subsetneq \{-1,-\frac{19}{18},-\frac{5}{4},-2\} = \Uptheta(f)$ is also a set of candidate poles for $Z_{\mot,\bzero}(f;s)$. 

To show that we execute our idea in \ref{X:strategy}. Indeed, we first note that \[
    \Upgamma(f) \; = \; H_{\bu_1,18}^+ \; \cap \; H_{\bu_2,8}^+ \; \cap \; H_{\bu_3,1}^+ \qquad \textrm{\small{(cf. \ref{D:newton-Q-polyhedra} for notation)}}.
\]
Since $H_{\bu_2,8}$ and $H_{\bu_3,1}$ intersect $\Upgamma(f)$ in the two $B_1$-facets $\tau_2$ and $\tau_3$, we ``drop'' $H_{\bu_2,8}^+$ and $H_{\bu_3,1}^+$ from $\Upgamma(f)$ to define the Newton $\QQ$-polyhedron: \[
    \Upgamma^\dag \; = \; H_{\bu_1,18}^+
\]
which we have outlined in \emph{red} on the left side of the above diagram. 

Illustrated on the right side of the diagram is a cross-section of the normal fan $\Sigma^\dag$ of $\Upgamma^\dag$, except that the rays $\ChMingHao{\langle\bu_2\rangle}$ and $\ChMingHao{\langle\bu_3\rangle}$, as well as the $2$-dimensional cones $\ChMingHao{\langle\bu_1,\bu_2 \rangle}$, $\ChMingHao{\langle\bu_2,\bu_3\rangle}$ and $\ChMingHao{\langle\be_3,\bu_2\rangle}$ which are outlined by \emph{dotted line segments}, are not in $\Sigma^\dag$ but originally in $\Sigma(f)$. In comparison, the $2$-dimensional cone $\ChMingHao{\langle\be_3,\bu_1\rangle}$ in $\Sigma^\dag$, which is outlined by the \emph{dashed thick line segment}, is originally not in $\Sigma(f)$.

Finally, we consider the multi-weighted blow-up of $\AA^3$ along $\Upgamma^\dag$: \[
    \Uppi_{\Sigma^\dag} \; \colon \; \sX_{\Sigma^\dag} \; = \; \left[\Spec\bigl(\kk[x_1',x_2',x_3',u_1]\bigr) \smallsetminus V(x_1',x_2',x_3') \q \Gm\right] \; \xrightarrow{\quad\quad} \; \AA^3
\]
induced by the homomorphism $\Uppi_{\Sigma^\dag}^\# \colon \kk[x_1,x_2,x_3] \to \kk[x_1',x_2',x_3',u_1]$ mapping $x_1 \mapsto x_1'u_1^9$, $x_2 \mapsto x_2'u_1^4$ and $x_3 \mapsto x_3'u_1^6$. We show next that $\Uppi_{\Sigma^\dag}$ is a stack-theoretic embedded desingularization of $V(f) \subset \AA^3$ above $\bzero \in \AA^3$. We first compute that $\Uppi_{\Sigma^\dag}^\#(f) = u_1^{18} \cdot f'$, where the proper transform of $f$ under $\Uppi_{\Sigma^\dag}$ is given by $f' := x_1'^2 + x_1'x_2'^4u_1^7 + x_2'^3x_3' + x_3'^3$. Since $\abs{\Uppi_{\Sigma^\dag}^{-1}(\bzero)} = \abs{V(u_1)} \subset \abs{\sX_{\Sigma^\dag}}$, it suffices to show $V(f'|_{V(u_1)}) = V\bigl(x_1'^2 + x_2'^3x_3' + x_3'^3\bigr) \subset V(u_1)$ is smooth. Indeed, if $J\bigl(f'|_{V(u_1)}\bigr)$ denotes the Jacobian ideal of $f'|_{V(u_1)}$, note \[
    \sqrt{\bigl(f'|_{V(u_1)}\bigr) + J\bigl(f'|_{V(u_1)}\bigr)} \; = \; \sqrt{\bigl(x_1',x_2'^2x_3',x_2'^3+3x_3'^2,x_3'^3\bigr)} \; = \; \bigl(x_1',x_2',x_3'\bigr)
\]
is the unit ideal on $\sX_{\Sigma^\dag}$, as desired.
\end{example}

\begin{remark}
In the above example, note that unlike $\Upgamma(f)$, $\Upgamma^\dag$ has a vertex with non-integer coordinates, namely the \emph{red vertex} $\frac{9}{2}\be_2^\vee$.
\end{remark}

\begin{remark}
Moreover, the morphism $\Uppi_{\Sigma^\dag}$ is the stack-theoretic weighted blow-up of $\AA^3$ along the center $\bigl(x_1^{1/9},x_2^{1/4},x_3^{1/6}\bigr)$, cf. first paragraph of \cite[Example 2.2.1]{abramovich-quek-multi-weighted-resolution}. We also remark $V(f)$ has a semi-quasihomogeneous singularity at $\bzero \in \AA^3$ (with the same weights $9$ on $x_1$, $4$ on $x_2$ and $6$ on $x_3$), and the strong monodromy conjecture is known for semi-quasihomogeneous hypersurfaces, cf. \cite{blanco-budur-robin-monodromy-conjecture}. In fact, the proof also uses weighted blow-ups.
\end{remark}

The next example, together with its remark, shows that the hypothesis in Theorem~\ref{T:fake-poles} that ``$\dB$ has consistent base directions'' \emph{cannot be dropped}:

\begin{example}\label{EX:x^2+yz}
Let $f = x_1^2+x_2x_3$. In the diagram below we shaded only the facets of $\Upgamma(f)$ that are not contained in any coordinate hyperplane $H_i$ in $M_\RR$. As with the previous example we ignore the red/blue vertices and dashed lines for now. \[
	\begin{tikzpicture}[scale=0.6]
	
	%%% shade %%%
	\filldraw[lightgray] (5.55,0,4)--(0,0,4)--(1,1,0)--(6.55,1,0);
	\filldraw[lightgray] (0,3.7,4)--(0,0,4)--(1,1,0)--(1,4.2,0);
	
	%%% axes %%%	
	%\draw[->, thick] (3,0,0)--(4.5,0,0); %%% y-axes
	%\draw[->, thick] (0,3,0)--(0,4.1,0); %%% z-axis
	\draw[->, thick] (0,0,4)--(0,0,7); % x-axis
	\draw[->, dashed] (0,0,0)--(0,0,4); % dashed x-axis
	\draw[<->, dashed] (0,4.1,0)--(0,0,0)--(6,0,0); %%% dashed z,y-axes
	
	%%% arrows/variables %%%
	\node at (6.4,-0.1,0) {$\be_2^\vee$};
	\node at (-0.5,4.1,0) {$\be_3^\vee$};
	\node at (-0.6,0,6.5) {$\be_1^\vee$};
	
	%%% plots on axes %%%
	\foreach \x in {1,...,5} % y-axis
	\draw (\x,0.1,0) -- (\x,-0.1,0);
	
	\foreach \x in {1,...,3} % z-axis
	\draw (0.1,\x,0) -- (-0.1,\x,0);
		
	\foreach \x in {2,4,6} % x-axis
	\draw (0,0.1,\x) -- (0,-0.1,\x);
	
	%%% plot lines %%%
	
	% y-z plane
	\foreach \x in {1,...,5}
	\draw[dotted] (\x,0,0) -- (\x,4.1,0);
	\foreach \x in {1,...,3}
	\draw[dotted] (0,\x,0) -- (6,\x,0);
	
	% x-y plane
	\foreach \x in {1,...,5}
	\draw[dotted] (\x,0,0) -- (\x,0,7);
	\foreach \x in {2,4,6}
	\draw[dotted] (0,0,\x) -- (6,0,\x);
	
	% x-z plane
	\foreach \x in {2,4,6}
	\draw[dotted] (0,0,\x) -- (0,4.3,\x);
	\foreach \x in {1,...,3}
	\draw[dotted] (0,\x,0) -- (0,\x,7.3);
	
	%%% lines %%%
	\draw[ultra thick] (0,0,4)--(5.55,0,4);
	\draw[ultra thick] (1,1,0)--(6.55,1,0);
	\draw[ultra thick] (0,0,4)--(1,1,0);
	\draw[ultra thick] (0,0,4)--(0,0,6.8);
	\draw[ultra thick] (0,0,4)--(0,3.7,4);
	\draw[ultra thick] (1,1,0)--(1,4.2,0);
	\draw[dashed] (5.55,0,4)--(6.55,1,0);
	\draw[dashed] (0,3.7,4)--(1,4.2,0);
	\draw[ultra thick, dashed, red] (1,4.2,0)--(1,1,0)--(1,0,0)--(0,0,4)--(0,3.7,4);
	\draw[ultra thick, dashed, red] (1,0,0)--(5.5,0,0);
	\draw[ultra thick, dashed, blue] (6.55,1,0)--(1,1,0)--(0,1,0)--(0,0,4)--(5.55,0,4);
	\draw[ultra thick, dashed, blue] (0,1,0)--(0,3.7,0);
	
	%%% large dots %%%
	\filldraw[black] (0,0,4) circle (4.5pt);
	\filldraw[black] (1,1,0) circle (4.5pt);
	\filldraw[red] (1,0,0) circle (4.5pt);
	\filldraw[blue] (0,1,0) circle (4.5pt);
    
    \end{tikzpicture}
\]
The two shaded facets of $\Upgamma(f)$: \begin{align*}
    \tau_1 \; &:= \; \bigl\{\ba \in \Upgamma(f) \colon \ba \cdot \bu_1 = 2\bigr\} \qquad \textrm{where } \bu_1 \; := \; \be_1+2\be_2 \\
    \tau_2 \; &:= \; \bigl\{\ba \in \Upgamma(f) \colon \ba \cdot \bu_2 = 2\bigr\} \qquad \textrm{where } \bu_2 \; := \; \be_1+2\be_3
\end{align*}
are adjacent $B_1$-facets with the same \ChMingHao{candidate pole} $-\frac{3}{2}$, but together they form a set of $B_1$-facets with \emph{inconsistent} base directions $2$ and $3$.

Thus, Theorem~\ref{T:fake-poles} does not apply to the set $\dB = \{\tau_1,\tau_2\}$. In fact, our idea in \ref{X:strategy} \emph{fails} in this scenario. Indeed, ``dropping'' both $H_{\bu_1,2}^+$ and $H_{\bu_2,2}^+$ from $\Upgamma(f) = H_{\bu_1,2}^+ \cap H_{\bu_2,2}^+$ yields $\Upgamma^\dag = M_\RR^+$, but the multi-weighted blow-up of $\AA^3$ along $M_\RR^+$ is the identity morphism on $\AA^3$!

Nevertheless, in Theorem~\ref{T:fake-poles} one could take $\dB$ to be either $\{\tau_1\}$ or $\{\tau_2\}$, although in either case $\Uptheta^{\dag,\dB}(f) = \{-1,-\frac{3}{2}\}$ is the same set as $\Uptheta(f)$. In spite of that, our idea in \ref{X:strategy} should still say something of consequence. Namely, for $\dB = \{\tau_1\}$ (resp. $\dB = \{\tau_2\}$), we claim that the multi-weighted blow-up of $\AA^3$ along the Newton polyhedron \[
    \Upgamma^{\dag,\tau_1} \; = \; H_{\bu_2,2}^+ \qquad \textrm{(resp. $\Upgamma^{\dag,\tau_2} \; = \; H_{\bu_1,2}^+$)}
\]
is a stack-theoretic embedded desingularization of $V(f) \subset \AA^3$ above $\bzero \in \AA^3$.

To verify this claim, let us first outline, in the diagram above, the Newton polyhedra $\Upgamma^{\dag,\tau_1}$ and $\Upgamma^{\dag,\tau_2}$ in \emph{blue} and \emph{red} respectively. On the left (resp. right) side of the diagram below, we also sketched a cross-section of the normal fan $\Sigma^{\dag,\tau_1}$ (resp. $\Sigma^{\dag,\tau_2}$) of $\Upgamma^{\dag,\tau_1}$ (resp. $\Upgamma^{\dag,\tau_2}$), keeping the same conventions as before in Example~\ref{EX:x^2+xy^4+y^3z+z^3}. \[
    \begin{tikzpicture}[scale=0.6]
    
    %%% shade %%%
    \filldraw[magenta] (11,4,5)--(13,0,2)--(12,3.667,3.333);
    \filldraw[cyan] (14,3,0)--(12,3.667,3.333)--(13,0,2);
    
    %%% lines %%%
    \draw[ultra thick] (14,3,0)--(13.333,1,1.333)--(13,0,2)--(12,2,3.5)--(11,4,5)--(12,3.667,3.333)--(14,3,0);
    \draw[thick, dotted] (12,3.667,3.333)--(13.333,1,1.333);
    \draw[ultra thick, dashed] (12,3.667,3.333)--(13,0,2);

    %%% large dots %%%
    \filldraw[black] (14,3,0) circle (4.5pt);
    \node at (14.2,3.3,-0.2) {$\be_1$};
    
    \filldraw[black] (11,4,5) circle (4.5pt);
    \node at (10.5,4.2,5.2) {$\be_2$};
    
    \filldraw[black] (13,0,2) circle (4.5pt);
    \node at (13.2,-0.4,2.2) {$\be_3$};
    
    \filldraw[black] (12,3.667,3.333) circle (4.5pt);
    \node at (12,4.1,3.333) {$\bu_1$};
    
    \filldraw[black] (13.333,1,1.333) circle (2.5pt);
    \node at (13.75,1,1.333) {\tiny{$\bu_2$}};
    
    %%% nodes %%%
    %\node at (2.6,2.6,2.3) {$\pmb{y'z'}$};
    %\node at (3.5,3.4,7.3) {$\pmb{x'}$};
    %\node at (2.6,4.25,5.4) {$\pmb{z'u_2}$};
    
    %%%%% TRANSITION TO FIRST DIAGRAM %%%%%%
	
    %%% shade %%%
    \filldraw[magenta] (0,4,5)--(2,0,2)--(2.333,1,1.333);
    \filldraw[cyan] (3,3,0)--(0,4,5)--(2.333,1,1.333);
    
    %%% lines %%%
    \draw[ultra thick] (3,3,0)--(2.333,1,1.333)--(2,0,2)--(1,2,3.5)--(0,4,5)--(1,3.667,3.333)--(3,3,0);
    \draw[dotted, thick] (1,3.667,3.333)--(2.333,1,1.333);
    \draw[ultra thick, dashed] (2.333,1,1.333)--(0,4,5);

    %%% large dots %%%
    \filldraw[black] (3,3,0) circle (4.5pt);
    \node at (3.2,3.3,-0.2) {$\be_1$};
    
    \filldraw[black] (0,4,5) circle (4.5pt);
    \node at (-0.5,4.2,5.2) {$\be_2$};
    
    \filldraw[black] (2,0,2) circle (4.5pt);
    \node at (2.2,-0.4,2.2) {$\be_3$};
    
    \filldraw[black] (1,3.667,3.333) circle (2.5pt);
    \node at (1,3.95,3.333) {\tiny{$\bu_1$}};
    
    \filldraw[black] (2.333,1,1.333) circle (4.5pt);
    \node at (2.9,1,1.333) {$\bu_2$};
    
    %%% nodes %%%
    %\node at (13.6,2.6,2.3) {$\pmb{y'z'}$};
    %\node at (14.5,3.4,7.3) {$\pmb{x'}$};
    %\node at (13.6,4.25,5.4) {$\pmb{z'u_2}$};
    
    \end{tikzpicture}
\]
From this diagram we see that the multi-weighted blow-up of $\AA^3$ along $\Upgamma^{\dag,\tau_2}$: \[
    \Uppi_{\Sigma^{\dag,\tau_2}} \; \colon \; \sX_{\Sigma^{\dag,\tau_2}} \; = \; \left[\Spec\bigl(\kk[x_1',x_2',x_3,u_1]\bigr) \smallsetminus V(x_1',x_2') \q \Gm\right] \; \xrightarrow{\quad\quad} \; \AA^3
\]
is induced by the homomorphism $\Uppi_{\Sigma^{\dag,\tau_2}}^\# \colon \kk[x_1,x_2,x_3] \to \kk[x_1',x_2',x_3,u_1]$ mapping $x_1 \mapsto x_1'u_1$, $x_2 \mapsto x_2'u_1^2$ and $x_3 \mapsto x_3$. (This is the stack-theoretic weighted blow-up of $\AA^3$ along the center $\bigl(x_1,x_2^{1/2}\bigr)$.) Thus, $\Uppi_{\Sigma^{\dag,\tau_2}}^\#(f) = u_1^2 \cdot f'$, where $f' := x_1'^2+x_2'x_3$ defines the proper transform of $f$ under $\Uppi_{\Sigma^{\dag,\tau_2}}$. It remains to note the Jacobian ideal $J(f')$ of $f'$ is $\bigl(x_1',x_2',x_3\bigr)$, i.e. the unit ideal on $\sX_{\Sigma^{\dag,\tau_2}}$. The same can be shown with $\tau_2$ replaced by $\tau_1$.
\end{example}

\begin{remark}\label{R:x^2+yz}
In fact, for $f = x_1^2+x_2x_3$, $\Uptheta(f) = \{-1,-\frac{3}{2}\}$ is the \emph{smallest} set of candidate poles for $Z_{\mot,\bzero}(f;s)$. To see this, it suffices to show that $-\frac{3}{2}$ is a pole of $Z_{\tg,\bzero}(f;s)$ (cf. Remark~\ref{R:topological-zeta-function}), which we compute via the embedded resolution of $V(f) \subset \AA^3$ given by the blow-up of $\AA^3$ in $\bzero \in \AA^3$: \[
    \uppi \; \colon \; \bl_\bzero\AA^3 \; = \; \Proj_{\AA^3}\left(\sO_{\AA^3}\bigl[x_1' := x_1t, x_2':=x_2t,x_3':=x_3t,t^{-1}\bigr]\right) \; \to \; \AA^3.
\]
Here, $\uppi^{-1}\bigl(V(f)\bigr) = 2 \cdot E_1 + E_2$, where $E_1 := V(t^{-1})$ is the exceptional divisor of $\uppi$, and $E_2 := V\bigl(x_1'^2+x_2'x_3'\bigr)$ is the proper transform of $V(f)$ under $\uppi$. Moreover, the relative canonical divisor is $K_\uppi = 2 \cdot E_1$. Since $\uppi^{-1}(\bzero) = E_1 \simeq \PP^2$, we have by definition \cite[Chapter 1, \S 3.3]{chambert-loir-nicaise-sebag-motivic-integration} that: \allowdisplaybreaks\begin{align*}
    Z_{\tg,\bzero}(f;s) \; &= \; \frac{\Eu(E_1 \smallsetminus E_2)}{2s+3} \; + \; \frac{\Eu(E_1 \cap E_2)}{(s+1)(2s+3)} \; \\
    &= \; \frac{\Eu\bigl(\PP^2 \smallsetminus V(x_1'^2+x_2'x_3')\bigr)}{2s+3} \; + \; \frac{\Eu\bigl(V(x_1'^2+x_2'x_3') \subset \PP^2\bigr)}{(s+1)(2s+3)} \\
    &= \; \frac{\Eu\bigl(\PP^2 \smallsetminus V(x_1'^2+x_2'x_3')\bigr)s+\Eu(\PP^2)}{(s+1)(2s+3)} \; \stackrel{\textrm{\cite{M2}}}{\longeq} \; \frac{s+3}{(s+1)(2s+3)}.
\end{align*}
\end{remark}

\begin{example}\label{EX:yz+x^2y^2+x^2z^2}
Let $f = x_2x_3+x_1^2x_2^2+x_1^2x_3^2$. Depicted on the left side of the diagram below is $\Upgamma(f)$, where a darker shade is used for the non--$B_1$-facet: \[
    \tau_1 \; := \; \bigl\{\ba \in \Upgamma(f) \colon \ba \cdot \bu_1 = 2\bigr\} \qquad \textrm{where } \bu_1 \; := \; \be_2+\be_3
\]
with \ChMingHao{candidate pole} $-1$, and a lighter shade is used for the two $B_1$-facets: \begin{align*}
    \tau_2 \; &:= \; \bigl\{\ba \in \Upgamma(f) \colon \ba \cdot \bu_2 = 2\bigr\} \qquad \textrm{where } \bu_2 \; := \; \be_1+2\be_2 \\
    \tau_3 \; &:= \; \bigl\{\ba \in \Upgamma(f) \colon \ba \cdot \bu_3 = 2\bigr\} \qquad \textrm{where } \bu_3 \; := \; \be_1+2\be_3
\end{align*}
each with \ChMingHao{candidate pole} $-\frac{3}{2}$. Although $\tau_2$ and $\tau_3$ have \emph{different} base directions $2$ and $3$, they are \emph{non-adjacent} and hence still form a set $\dB$ of $B_1$-facets of $\Upgamma(f)$ with consistent base directions.
\[
	\begin{tikzpicture}[scale=0.6]
	
	%%% shade %%%
	\filldraw[gray] (0,2,8.5)--(0,2,4)--(1,1,0)--(2,0,4)--(2,0,8.5);
	\filldraw[lightgray] (0,5,4)--(0,2,4)--(1,1,0)--(1,4,0);
	\filldraw[lightgray] (6.8,0,4)--(2,0,4)--(1,1,0)--(5.8,1,0);
	
	%%% axes %%%	
	%\draw[->, thick] (3,0,0)--(4.5,0,0); %%% y-axes
	%\draw[->, thick] (0,3,0)--(0,4.1,0); %%% z-axis
	%\draw[->, thick] (0,0,4)--(0,0,7); % x-axis
	\draw[->, dashed] (0,0,0)--(0,0,7); % dashed x-axis
	\draw[<->, dashed] (0,4.1,0)--(0,0,0)--(6,0,0); %%% dashed z,y-axes
	
	%%% arrows/variables %%%
	\node at (6.4,-0.1,0) {$\be_2^\vee$};
	\node at (-0.5,4.1,0) {$\be_3^\vee$};
	\node at (-0.6,0,6.5) {$\be_1^\vee$};
	
	%%% plots on axes %%%
	\foreach \x in {1,...,5} % y-axis
	\draw (\x,0.1,0) -- (\x,-0.1,0);
	
	\foreach \x in {1,...,3} % z-axis
	\draw (0.1,\x,0) -- (-0.1,\x,0);
		
	\foreach \x in {2,4,6} % x-axis
	\draw (0,0.1,\x) -- (0,-0.1,\x);
	
	%%% plot lines %%%
	
	% y-z plane
	\foreach \x in {1,...,5}
	\draw[dotted] (\x,0,0) -- (\x,4.1,0);
	\foreach \x in {1,...,3}
	\draw[dotted] (0,\x,0) -- (6,\x,0);
	
	% x-y plane
	\foreach \x in {1,...,5}
	\draw[dotted] (\x,0,0) -- (\x,0,7);
	\foreach \x in {2,4,6}
	\draw[dotted] (0,0,\x) -- (6,0,\x);
	
	% x-z plane
	\foreach \x in {2,4,6}
	\draw[dotted] (0,0,\x) -- (0,4.3,\x);
	\foreach \x in {1,...,3}
	\draw[dotted] (0,\x,0) -- (0,\x,7.3);
	
	%%% lines %%%
	\draw[ultra thick] (0,5,4)--(0,2,4)--(1,1,0)--(2,0,4)--(6.8,0,4);
	\draw[ultra thick] (1,4,0)--(1,1,0)--(5.8,1,0);
	\draw[ultra thick] (0,2,4)--(0,2,8.5);
	\draw[ultra thick] (2,0,4)--(2,0,8.5);
	\draw[dashed] (6.8,0,4)--(5.8,1,0);
	\draw[dashed] (0,5,4)--(1,4,0);
	\draw[dashed] (0,2,8.5)--(2,0,8.5);
	\draw[dashed, ultra thick, red] (2,0,8.5)--(2,0,4)--(2,0,0)--(1,1,0);
    \draw[dashed, ultra thick, red] (0,2,8.5)--(0,2,4)--(0,2,0)--(1,1,0);
    \draw[dashed, ultra thick, red] (2,0,0)--(5.8,0,0);
    \draw[dashed, ultra thick, red] (0,2,0)--(0,3.9,0);
    
	%%% large dots %%%
	\filldraw[black] (0,2,4) circle (4.5pt);
	\filldraw[black] (1,1,0) circle (4.5pt);
	\filldraw[black] (2,0,4) circle (4.5pt);
	\filldraw[red] (0,2,0) circle (4.5pt);
	\filldraw[red] (2,0,0) circle (4.5pt);
	
    %%%%% TRANSITION TO SECOND DIAGRAM %%%%%%
	
    %%% shade %%%
    \filldraw[magenta] (14,3,0)--(12,2,3.5)--(13,0,2)--(14,3,0);
    \filldraw[cyan] (14,3,0)--(12,2,3.5)--(11,4,5)--(14,3,0);
    %\filldraw[brown] (14,3,0)--(13.333,1,1.333)--(12,2,3.5)--(12,3.667,3.333)--(14,3,0);
    %\filldraw[magenta] (12,3.667,3.333)--(12,2,3.5)--(11,4,5)--(12,3.667,3.333);
    %\filldraw[cyan] (13.333,1,1.333)--(13,0,2)--(12,2,3.5)--(13.333,1,1.333);
    
    %%% lines %%%
    \draw[ultra thick] (14,3,0)--(13.333,1,1.333)--(13,0,2)--(12,2,3.5)--(11,4,5)--(12,3.667,3.333)--(14,3,0);
    \draw[ultra thick, dashed] (14,3,0)--(12,2,3.5);
    \draw[dotted, thick] (12,3.667,3.333)--(12,2,3.5)--(13.333,1,1.333);
    
    %%% large dots %%%
    \filldraw[black] (14,3,0) circle (4.5pt);
    \node at (14.2,3.3,-0.2) {$\be_1$};
    
    \filldraw[black] (11,4,5) circle (4.5pt);
    \node at (10.5,4.2,5.2) {$\be_2$};
    
    \filldraw[black] (13,0,2) circle (4.5pt);
    \node at (13.2,-0.4,2.2) {$\be_3$};
    
    \filldraw[black] (12,2,3.5) circle (4.5pt);
    \node at (11.6,1.6,3.5) {$\bu_1$};
    
    \filldraw[black] (12,3.667,3.333) circle (2.5pt);
    \node at (12,3.95,3.333) {\tiny{$\bu_2$}};
    
    \filldraw[black] (13.333,1,1.333) circle (2.5pt);
    \node at (13.75,1,1.333) {\tiny{$\bu_3$}};
    
    %%% nodes %%%
    %\node at (13.6,2.6,2.3) {$\pmb{y'z'}$};
    %\node at (14.5,3.4,7.3) {$\pmb{x'}$};
    %\node at (13.6,4.25,5.4) {$\pmb{z'u_2}$};

    \end{tikzpicture}
\]
Consequently, Theorem~\ref{T:fake-poles} says that $\Uptheta^{\dag,\dB}(f) = \{-1\} \subsetneq \{-1,-\frac{3}{2}\} = \Uptheta(f)$ is also a set of candidate poles for $Z_{\mot,\bzero}(f;s)$. To see this, we proceed as with previous examples: we ``drop'' $H_{\bu_2,2}^+$ and $H_{\bu_3,2}^+$ from $\Upgamma(f) = \bigcap\{H_{\bu_i,2}^+ \colon 1 \leq i \leq 3\}$ to define the Newton polyhedron $\Upgamma^\dag = H_{\bu_1,2}^+$, which we have outlined in red on the left side of the above diagram. 

On the right side we sketched the normal fan $\Sigma^\dag$ of $\Upgamma^\dag$, keeping the same conventions as before in Example~\ref{EX:x^2+xy^4+y^3z+z^3}. The multi-weighted blow-up of $\AA^3$ along $\Upgamma^\dag$ is then simply the blow-up of $\AA^3$ along $V(x_2,x_3) \subset \AA^3$: \[
    \Uppi_{\Sigma^\dag} \; \colon \; \sX_{\Sigma^\dag} \; = \; \Proj_{\AA^3}\left(\sO_{\AA^3}\bigl[x_2':=x_2t,x_3':=x_3t,u_1:=t^{-1}\bigr]\right) \; \xrightarrow{\quad\quad} \; \AA^3.
\]
We have $\Uppi_{\Sigma^\dag}^\#(f) = u_1^2 \cdot f'$, where $f' := x_2'x_3' + x_1^2x_2'^2 + x_1^2x_3'^2$ defines the proper transform of $f$ under $\Uppi_{\Sigma^\dag}$. If $J(f')$ denotes the Jacobian of $f'$, we then have: \begin{align*}
    \sqrt{(f') + J(f')} \; &= \; \sqrt{\bigl(x_1x_2'^2+x_1x_3'^2,x_3'+2x_1^2x_2',x_2'+2x_1^2x_3',x_2'x_3'\bigr)} \\
    &= \; \sqrt{\bigl(x_3'+2x_1^2x_2',x_2'+2x_1^2x_3',x_2'x_3',x_1x_2',x_1x_3'\bigr)} \; = \; \bigl(x_2',x_3'\bigr) 
\end{align*}
which is the unit ideal on $\sX_{\Sigma^\dag}$, i.e. $\Uppi_{\Sigma^\dag}$ is a stack-theoretic embedded desingularization for $V(f) \subset \AA^3$ as desired.
\end{example}

\begin{remark}\label{R:yz+x^2y^2+x^2z^2}
While $\Uptheta(f) \smallsetminus \{-\frac{3}{2}\} = \{-1\}$ is a set of candidate poles for $Z_{\mot,\bzero}(f;s)$, note that $\frac{3}{2}$ \emph{still} induces a monodromy eigenvalue of $f$ near $\bzero$.
\end{remark}

\section{Proof of main theorem}\label{4}

\subsection{Dropping a set of facets from a Newton \texorpdfstring{$\QQ$}{Q}-polyhedron}\label{4.1}

In this section, we fix a Newton $\QQ$-polyhedron $\Upgamma$, with associated piecewise-linear, convex $\QQ$-function $\varphi$ (\S\ref{2.1}), and associated normal fan $\Sigma$ in $N_\RR$ (\S\ref{2.2}). \ChMingHao{We first fix the following conventions for the remainder of this paper}:

\begin{xpar}\label{X:convention-4.1-A} 
If two rays $\rho_1, \rho_2 \in \Sigma[1]$ satisfy $\rho_1 + \rho_2 \in \Sigma[2]$, we say that $\rho_1$ and $\rho_2$ are {\sffamily adjacent in $\Sigma$}, and write \[
    \rho_1 \; \frown \; \rho_2 \;\; \textrm{ in $\Sigma$.}
\]
Given $\tau, \tau' \prec^\ChMingHao{1} \Upgamma$, note that $\tau \frown \tau'$ (in the sense of \ref{X:convention-2.1.A}) if and only if $\rho_\tau \frown \rho_{\tau'}$ in $\Sigma$ (cf. \ref{X:cone-face-correspondence}).
\end{xpar}

\begin{xpar}
Throughout this section, consider a subset $\dB$ of facets of $\Upgamma$ that are \emph{not} contained in any translate $m\be_i+H_i$ of any coordinate hyperplane $H_i$ in $M_\RR^+$. For any such $\dB$, we set \[
    \ChMingHao{\Sigma[1]|_\dB} \; := \; \bigl\{\rho_\tau \in \Sigma[1] \colon \tau \in \dB\bigr\} \; \subset \; \Sigma[1].
\] 
As motivated in \S\ref{3.2}, we study in this section the Newton $\QQ$-polyhedron obtained from $\Upgamma$ by ``dropping the facets in $\dB$'':
\end{xpar}

\begin{definition}\label{D:B-cut}
Recalling from \eqref{EQ:associated-newton-Q-polyhedron} that \[
    \Upgamma \; = \; \bigcap_{\tau \prec^1 \Upgamma}{H_{\bu_\tau,N_\tau}^+}
\]
we define the {\sffamily $\dB$-cut} of $\Upgamma$ to be the following Newton $\QQ$-polyhedron: \[
    \Upgamma^{\dag,\dB} \; := \; \bigcap\bigl\{H_{\bu_\tau,N_\tau}^+ \colon \tau \prec^1 \Upgamma,\; \tau \notin \dB\bigr\} \; \supset \; \Upgamma.
\]
\ChMingHao{We call its normal fan in $N_\RR$ the $\dB$-cut of $\Sigma$,} and we denote it by $\Sigma^{\dag,\dB}$. \ChMingHao{When $\dB$ is unambiguous from context}, we write $\Upgamma^\dag$ for $\Upgamma^{\dag,\dB}$ and $\Sigma^\dag$ for $\Sigma^{\dag,\dB}$. 
\end{definition} 

\begin{lemma}\label{L:facets-dagger}
Let $\tau \prec^1 \Upgamma$ such that $\tau \notin \dB$. Then: \begin{enumerate}
    \item There exists a (unique) facet $\tau^\dag \prec^1 \Upgamma^\dag$ such that $\tau^\dag \cap \Upgamma = \tau$.
    \item If moreover $\tau$ is not adjacent to any facet in $\dB$, then $\tau^\dag = \tau$. In other words, $\tau$ remains a facet of $\Upgamma^\dag$.
\end{enumerate}
\end{lemma}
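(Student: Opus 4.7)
The plan for part (i) is to set $\tau^\dag := \Upgamma_+^\dag \cap H_{\bu_\tau,N_\tau}$. Since $\tau \notin \dB$, the half-space $H_{\bu_\tau,N_\tau}^+$ appears in the intersection defining $\Upgamma_+^\dag$, so $\tau^\dag$ is a face of $\Upgamma_+^\dag$. The inclusions $\tau \subset \tau^\dag \subset H_{\bu_\tau,N_\tau}$, together with $\dim \tau = n-1$ and the fact that $\Upgamma_+^\dag \supset \Upgamma_+$ is $n$-dimensional, force $\dim \tau^\dag = n-1$, making $\tau^\dag$ a facet of $\Upgamma_+^\dag$. The identity $\tau^\dag \cap \Upgamma_+ = \tau$ is immediate from $\Upgamma_+ \cap H_{\bu_\tau,N_\tau} = \tau$. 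Uniqueness holds because any facet of $\Upgamma_+^\dag$ intersecting $\Upgamma_+$ in $\tau$ must contain $\tau$, forcing its $(n-1)$-dimensional affine span to coincide with $H_{\bu_\tau,N_\tau}$.

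For part (ii), given part (i) the task reduces to establishing $\tau^\dag \subset \tau$, and I would do this polyhedrally rather than via a segment argument. By Corollary~\ref{C:cone-face-correspondence-A}, each $(n-2)$-face $\varsigma \prec^1 \tau$ corresponds to a $2$-dimensional cone in $\Sigma$ whose extremal rays are $\rho_\tau$ and $\rho_{\tau'}$ for a uniquely determined adjacent facet $\tau' \frown \tau$ of $\Upgamma_+$, so $\varsigma = \tau \cap \tau'$. Consequently, inside $H_{\bu_\tau,N_\tau}$, the polyhedron $\tau$ is exactly cut out by the positivity constraints from $M_\RR^+$ together with the half-spaces $H_{\bu_{\tau'},N_{\tau'}}^+$ ranging over non-coordinate adjacent facets $\tau' \frown \tau$. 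The hypothesis of (ii), combined with the assumption in \ref{X:convention-4.1-A} that $\dB$ contains no coordinate facets, ensures every such $\tau'$ lies outside $\dB$, so the corresponding half-space is among those defining $\Upgamma_+^\dag$ in Definition~\ref{D:B-cut}. Since $\tau^\dag \subset \Upgamma_+^\dag \subset M_\RR^+$ and $\tau^\dag \subset H_{\bu_\tau,N_\tau}$ by construction, $\tau^\dag$ satisfies every constraint cutting out $\tau$, whence $\tau^\dag \subset \tau$.

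The hard part, I expect, will be to rigorously justify the polyhedral description above, namely that every facet of the $(n-1)$-polyhedron $\tau$ inside $H_{\bu_\tau,N_\tau}$ arises either from a positivity constraint of $M_\RR^+$ or from a non-coordinate adjacent facet of $\Upgamma_+$. This should follow from \ref{X:cone-face-correspondence} and Corollary~\ref{C:cone-face-correspondence-A} applied to the $2$-dimensional cones of $\Sigma$ containing $\rho_\tau$ as an extremal ray, but some care is required near coordinate hyperplanes, where $\tau$ may touch $H_i$ even when $\Upgamma_+ \cap H_i$ is not itself a facet of $\Upgamma_+$.
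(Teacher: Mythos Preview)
Your proof is correct and follows essentially the same approach as the paper, but you are making part (ii) harder than it needs to be by artificially separating coordinate facets from non-coordinate ones.

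The paper's argument for (ii) is simply: since the facets of $\tau$ (as an $(n-1)$-polyhedron) are exactly the $(n-2)$-faces $\tau \cap \tau'$ for $\tau' \frown \tau$, one has
\[
    \tau \;=\; H_{\bu_\tau,N_\tau} \;\cap\; \bigcap\bigl\{H_{\bu_{\tau'},N_{\tau'}}^+ : \tau' \prec^1 \Upgamma_+,\ \tau' \frown \tau\bigr\},
\]
with the intersection ranging over \emph{all} adjacent facets, coordinate or not. The hypothesis of (ii) says each such $\tau'$ lies outside $\dB$, so every half-space above already appears in the definition of $\Upgamma_+^\dag$, giving $\tau \supset \tau^\dag$ immediately.

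Your coordinate/non-coordinate dichotomy is unnecessary because in this paper the half-spaces $H_{\bu,m}^+$ are by definition subsets of $M_\RR^+$ (Definition~\ref{D:newton-Q-polyhedra}), so the positivity constraints are already built into every $H_{\bu_{\tau'},N_{\tau'}}^+$. A coordinate facet $\tau_i$ with $N_{\tau_i}=0$ contributes the vacuous constraint $H_{\be_i,0}^+ = M_\RR^+$, and one with $N_{\tau_i}>0$ contributes an honest half-space; either way it fits into the same formula. Your final worry --- that $\tau$ might touch $H_i$ without $\Upgamma_+ \cap H_i$ being a facet --- cannot occur: since $\abs{\Sigma}=N_\RR^+$, each $\RR_{\geq 0}\be_i$ is a ray of $\Sigma$ (cf.\ the inclusion $\Sigma_\std[1] \subset \Sigma[1]$ noted in \ref{X:explicating-multi-weighted-blow-up}), hence dual to a genuine facet of $\Upgamma_+$; and if $\tau$ meets $H_i$ then $\varphi(\be_i)=0$ and that facet is precisely $\Upgamma_+ \cap H_i$, which is $(n-1)$-dimensional because $\Upgamma_+ + M_\RR^+ = \Upgamma_+$.
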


\begin{proof}
For (i), note that \[
    \tau \; = \; H_{\bu_\tau,N_\tau} \cap\; \bigcap\bigl\{H_{\bu_{\tau'},N_{\tau'}}^+ \colon \tau' \prec^1 \Upgamma\bigr\}.
\]
Set \[
    \tau^\dag \; := \; H_{\bu_\tau,N_\tau} \cap \Upgamma^\dag \; = \; H_{\bu_\tau,N_\tau} \cap \;\bigcap\bigl\{H_{\bu_\tau,N_\tau}^+ \colon \tau'\prec \Upgamma,\; \tau' \notin \dB\bigr\}
\]
from which it follows that $\tau^\dag \cap \Upgamma = \tau$. Since $\tau \subset \tau^\dag \subset H_{\bu_\tau,N_\tau}$, $\dim(\tau^\dag) = n-1$, i.e. $H_{\bu_\tau,N_\tau}$ is a supporting hyperplane for $\Upgamma^\dag$, and $\tau^\dag$ is a facet of $\Upgamma$. For (ii), note that since every face of $\tau$ is the intersection of a subset of facets of $\tau$, we have: \[
    \tau \; = \; H_{\bu_\tau,N_\tau} \cap \; \bigcap\bigl\{H_{\bu_{\tau'},N_{\tau'}}^+ \colon \tau' \prec^1 \Upgamma,\; \tau' \frown \tau\bigr\}.
\]
By hypothesis, $\{\tau' \prec \Upgamma \colon \tau' \frown \tau\} \subset \{\tau' \prec \Upgamma, \colon \tau' \notin \dB\}$. Therefore, $\tau \supset \tau^\dag$, which proves (ii).
\end{proof}

\begin{xpar}[A correspondence]\label{X:facet-correspondence}
\ChMingHao{The preceding lemma sets up an injection \begin{align}\label{EQ:facet-correspondence}
    \begin{split}
    \bigl\{\textrm{facets of $\Upgamma$}\bigr\} \smallsetminus \dB \; &\xhookrightarrow{\qquad} \; \bigl\{\textrm{facets of $\Upgamma^\dag$}\bigr\} \\
    \tau \; &\xmapsto{\qquad} \; \tau^\dag
    \end{split}
\end{align}
which is in fact a bijection, since we have assumed that each facet in $\dB$ is not contained in $m\be_i + H_i$ for any $i \in [n]$ and $m \in \QQ_{>0}$}. We will freely adopt this correspondence for the remainder of this paper. Note that in particular, $\Sigma^\dag[1] = \Sigma[1] \smallsetminus \ChMingHao{\Sigma[1]|_\dB}$. For $\rho \in \Sigma^\dag[1]$, we may therefore consider $\rho$ as a ray in $\Sigma[1]$ --- in that case, we continue to denote by $\tau_\rho$ the facet of $\Upgamma$ dual to $\rho$ in $\Sigma[1]$. On the other hand, we denote by $\tau_\rho^\dag$ the facet of $\Upgamma^\dag$ dual to $\rho$ in $\Sigma^\dag[1]$. This does not contradict the notation in \eqref{EQ:facet-correspondence}.
\end{xpar}

\begin{xpar}\label{X:piecewise-linear-convex-function-dagger}
Let $\varphi^\dagger \colon N_\RR^+ \to \RR_{\geq 0}$ be the piecewise-linear, convex $\QQ$-function corresponding to the Newton $\QQ$-polyhedron $\Upgamma^\dag$, cf. \ref{X:piecewise-linear-convex-function}. By \ref{X:alt-characterization} and \ref{X:facet-correspondence}, $\varphi^\dag$ can be explicated as \[
    \varphi^\dag \; = \; \min\sS^\dag
\]
where \[
    \sS^\dag \; := \; \left\{\parbox{8cm}{linear functions $\ell \colon N_\RR^+ \to \RR_{\geq 0}$ such that $\ell(\bu_\tau) \geq N_\tau$ for every facet $\tau \prec^1 \Upgamma$ not in $\dB$}\right\}.
\]
We also note that for every facet $\tau \prec^1 \Upgamma$ not in $\dB$, \begin{equation}\label{EQ:piecewise-linear-convex-function-dagger}
    \varphi(\bu_\tau) \; = \; N_\tau \; = \; \varphi^\dag(\bu_\tau).
\end{equation}
For the remainder of this section, we switch our focus to the cones in $\Sigma^\dag$. For later purposes (e.g. in \S\ref{4.3}), we occasionally state some of our definitions and results for cones in the augmentation $\overline{\Sigma}^\dag$ of $\Sigma^\dag$, cf. \ref{X:cox-construction}.
\end{xpar}

\begin{definition}\label{D:new-old-cones}
We say a cone $\sigma$ in $\overline{\Sigma}^\dag$ is {\sffamily old} if $\sigma$ can be inscribed in some cone $\sigma'$ in $\Sigma$ (in which case one writes $\sigma \sqsubset \sigma'$). If not, we say $\sigma$ is {\sffamily new}.
\end{definition}

\begin{lemma}\label{L:old}\
\begin{enumerate}
    \item For any cone $\sigma$ in $\Sigma$, the cone \ChMingHao{$\sigma^\dag$ in $N_\RR$ generated by rays in} \[
        \ChMingHao{\sigma[1] \smallsetminus \Sigma[1]|_\dB}
    \]
    is a cone in $\Sigma^\dag$ (hence, all its faces are old cones in $\Sigma^\dag$). Moreover, for every $\bu \in \sigma^\dag$, $\varphi^\dag(\bu) = \varphi(\bu)$.
    \item For every facet $\tau \prec^1 \Upgamma$ with $\tau \in \dB$, we have: \[
        \varphi^\dag(\bu_\tau) \; < \; \varphi(\bu_\tau).
    \]
\end{enumerate}
\end{lemma}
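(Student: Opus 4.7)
The plan is to exploit the bijection \eqref{EQ:facet-correspondence} between facets of $\Upgamma_+$ not in $\dB$ and facets of $\Upgamma_+^\dag$, combined with the cone--face correspondence of \ref{X:cone-face-correspondence} applied to \emph{both} $\Upgamma_+$ and $\Upgamma_+^\dag$. Part (i) will be a matter of matching the combinatorial formula for $\sigma^\dag$ with the geometric description of the $\Sigma^\dag$-cone dual to a suitable point of $\Upgamma_+^\dag$, while part (ii) will follow from a short perturbation argument.

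For (i), I would first set $\varsigma := \varsigma_\sigma \prec \Upgamma_+$ so that by Corollary~\ref{C:cone-face-correspondence-A},
\[
    \sigma^\dag \; = \; \sum\bigl\{\rho_\tau \colon \varsigma \prec \tau \prec^1 \Upgamma_+,\; \tau \notin \dB\bigr\}.
\]
Picking any $\ba \in \relint(\varsigma) \subset \Upgamma_+ \subset \Upgamma_+^\dag$, I would then identify the cone $\sigma_\ba^\dag \in \Sigma^\dag$ (dual to $\ba$ in $\Upgamma_+^\dag$) with $\sigma^\dag$. Indeed, Corollary~\ref{C:structure-of-cones} applied to $\Upgamma_+^\dag$ says $\sigma_\ba^\dag$ is generated by the rays $\RR_{\geq 0}\bu_{\tau^\dag} = \RR_{\geq 0}\bu_\tau$ for $\tau^\dag \prec^1 \Upgamma_+^\dag$ with $\ba \in \tau^\dag$. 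Using \eqref{EQ:facet-correspondence} and the identity $\tau^\dag \cap \Upgamma_+ = \tau$ from Lemma~\ref{L:facets-dagger}(i), the condition $\ba \in \tau^\dag$ reduces to $\ba \in \tau$, which by Corollary~\ref{C:cone-face-correspondence} is further equivalent to $\varsigma \prec \tau$. This yields $\sigma^\dag = \sigma_\ba^\dag \in \Sigma^\dag$, uniformly regardless of whether $\sigma^\dag = \{\bzero\}$. The equality $\varphi^\dag(\bu) = \varphi(\bu)$ for $\bu \in \sigma^\dag$ then follows at once: $\bu \in \sigma^\dag = \sigma_\ba^\dag$ gives $\varphi^\dag(\bu) = \ba \cdot \bu$, while $\bu \in \sigma^\dag \subset \sigma = \sigma_\ba$ gives $\varphi(\bu) = \ba \cdot \bu$.

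For (ii), I would fix $\tau \in \dB$ and first observe that the hypothesis in \ref{X:convention-4.1-A} is precisely the condition $\bu_\tau \neq \be_i$ for every $i \in [n]$: since the affine span of $\tau$ is $H_{\bu_\tau,N_\tau}$, one has $\tau \subset m\be_i^\vee + H_i$ exactly when $\bu_\tau = \be_i$ (with $m = N_\tau$). In particular $\tau \not\subset H_i$ for any $i$, so any $\bv \in \relint(\tau)$ has strictly positive coordinates. Next I would pick $i \in [n]$ with $u_{\tau,i} > 0$ (which exists since $\bzero \neq \bu_\tau \in N^+$), and for $t > 0$ sufficiently small set $\bv' := \bv - t\be_i^\vee \in M_\RR^+$. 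For every facet $\tau' \neq \tau$ of $\Upgamma_+$, the inequality $\bv \cdot \bu_{\tau'} > N_{\tau'}$ is strict (else $\bv$ would lie in the proper face $\tau \cap \tau' \prec \tau$, contradicting $\bv \in \relint(\tau)$), so by continuity $\bv' \cdot \bu_{\tau'} \geq N_{\tau'}$ for all such $\tau'$; in particular, $\bv' \in \Upgamma_+^\dag$. But $\bv' \cdot \bu_\tau = N_\tau - t u_{\tau,i} < N_\tau$, so
\[
    \varphi^\dag(\bu_\tau) \; \leq \; \bv' \cdot \bu_\tau \; < \; N_\tau \; = \; \varphi(\bu_\tau).
\]

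The main subtle point will be the identification $\sigma^\dag = \sigma_\ba^\dag$ in (i): it rests on carefully converting the condition ``$\ba \in \tau^\dag$'' into a condition on the facet $\tau$ of $\Upgamma_+$ via Lemma~\ref{L:facets-dagger}(i), after which the cone--face correspondence on $\Upgamma_+$ does the rest. Everything else is either a direct appeal to existing results or a small perturbation argument.
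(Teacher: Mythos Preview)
Your proof is correct. Part (i) is essentially the paper's argument: you both pick $\ba \in \relint(\varsigma_\sigma)$ and identify $\sigma^\dag$ with the $\Sigma^\dag$-cone $\sigma_\ba^\dag$, by matching extremal rays via the facet correspondence \eqref{EQ:facet-correspondence}; the only cosmetic difference is that the paper phrases the ray comparison through \eqref{EQ:piecewise-linear-convex-function-dagger} while you go through Lemma~\ref{L:facets-dagger}(i) and Corollary~\ref{C:structure-of-cones} directly.

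Part (ii), however, takes a genuinely different route. The paper simply specializes (i) to $\sigma = \rho_\tau$: since $\rho_\tau \in \Sigma[\dB]$, one gets $\sigma^\dag = \{\bzero\}$, i.e.\ $\sigma_\ba^\dag = \{\bzero\}$ for $\ba \in \relint(\tau)$, whence $\varphi^\dag(\bu_\tau) \neq \ba \cdot \bu_\tau$; combined with $\ba \in \Upgamma_+^\dag$ (so $\varphi^\dag(\bu_\tau) \leq \ba \cdot \bu_\tau = \varphi(\bu_\tau)$), the strict inequality follows. Your perturbation argument is more hands-on: you exhibit an explicit point $\bv' \in \Upgamma_+^\dag$ with $\bv' \cdot \bu_\tau < N_\tau$. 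Both are short; the paper's is slicker in that it reuses (i) with no new ingredients, while yours is self-contained and makes the geometric reason for the strict inequality (namely, that $\Upgamma_+^\dag$ genuinely extends past the hyperplane $H_{\bu_\tau,N_\tau}$) more visible.
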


\begin{proof}
For (i), let $\ba \in \relint(\varsigma_\sigma)$, so that $\sigma = \sigma_\ba = \{\bu \in N_\RR^+ \colon \varphi(\bu) = \ba \cdot \bu\}$, cf. \ref{X:cone-face-correspondence}. Let $\sigma^\dag_\ba := \{\bu \in N_\RR^+ \colon \varphi^\dag(\bu) = \ba \cdot \bu\}$, which by definition is a cone in $\Sigma^\dag$. We claim that $\sigma^\dag_\ba[1] = \sigma_\ba[1] \smallsetminus \ChMingHao{\Sigma[1]|_\dB}$, which would prove (i). Indeed, given any $\tau \prec^1 \Upgamma$ not in $\dB$, we have $\varphi^\dag(\bu_\tau) = \varphi(\bu_\tau)$, cf. \eqref{EQ:piecewise-linear-convex-function-dagger}, and hence we have $\varphi^\dag(\bu_\tau) = \ba \cdot \bu_\tau$ if and only if $\tau$ is dual to a ray in $\sigma_\ba[1] \smallsetminus \ChMingHao{\Sigma[1]|_\dB}$. By Corollary~\ref{C:structure-of-cones} and \ref{X:facet-correspondence}, this proves our claim.

For (ii), we apply the above argument to the case where $\sigma$ is the ray $\rho_\tau$ in $\Sigma$ dual to $\tau \prec \Upgamma$, and we obtain that for $\ba \in \relint(\tau)$, we have $\{\bu \in N_\RR^+ \colon \varphi^\dag(\bu) = \ba \cdot \bu\} = \{\bzero\}$. Combining that with the fact that $\ba \in \Upgamma \subset \Upgamma^\dag$, we must have $\varphi^\dag(\bu_\tau) < \ba \cdot \bu_\tau = \varphi(\bu_\tau)$.
\end{proof}

\begin{lemma}\label{L:old-cone}
Let $\sigma$ be a cone in $\Sigma^\dag$. \begin{enumerate}
    \item If there is an extremal ray $\rho$ of $\sigma$ that is not adjacent in $\Sigma$ to any ray in $\ChMingHao{\Sigma[1]|_\dB}$, then $\sigma$ is old.
    \item If \underline{moreover} $\dim(\sigma) = 2$, then $\sigma$ is a cone in $\Sigma$.
\end{enumerate}
\end{lemma}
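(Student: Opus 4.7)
The plan is to leverage the correspondence from \ref{X:facet-correspondence} between facets of $\Upgamma_+^\dag$ and those of $\Upgamma_+$ outside $\dB$, together with Lemma~\ref{L:facets-dagger}, so as to transfer information from the duality in $(\Upgamma_+^\dag, \Sigma^\dag)$ back to that in $(\Upgamma_+, \Sigma)$. Throughout, let $\rho$ denote the prescribed extremal ray of $\sigma$; by \ref{X:convention-4.1-A}, the hypothesis on $\rho$ says exactly that the facet $\tau_\rho \prec^1 \Upgamma_+$ is not adjacent to any facet in $\dB$, so Lemma~\ref{L:facets-dagger}(ii) yields $\tau^\dag_\rho = \tau_\rho$.

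For (i), I would pick $\ba \in \relint(\varsigma^\dag_\sigma)$, where $\varsigma^\dag_\sigma \prec \Upgamma_+^\dag$ is the face dual to $\sigma$ in $\Sigma^\dag$. By Corollary~\ref{C:cone-face-correspondence-A} applied to $\Sigma^\dag$, $\varsigma^\dag_\sigma \subset \tau^\dag_\rho = \tau_\rho \subset \Upgamma_+$, so $\ba \in \Upgamma_+$ and the cone $\sigma_\ba \in \Sigma$ from \ref{X:cone-face-correspondence} is defined. For any other ray $\rho' \in \sigma[1]$, the same corollary gives $\ba \in \tau^\dag_{\rho'}$, while Lemma~\ref{L:facets-dagger}(i) refines this to $\ba \in \tau^\dag_{\rho'} \cap \Upgamma_+ = \tau_{\rho'}$; hence Lemma~\ref{L:extremal-rays} identifies $\rho'$ as an extremal ray of $\sigma_\ba$. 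Thus $\sigma[1] \subset \sigma_\ba[1]$, i.e.\ $\sigma \sqsubset \sigma_\ba$, and $\sigma$ is old.

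For (ii), assume $\dim(\sigma) = 2$ and let $\rho''$ be the second extremal ray of $\sigma$. Corollary~\ref{C:cone-face-correspondence-A} in $\Sigma^\dag$ gives $\varsigma^\dag_\sigma = \tau^\dag_\rho \cap \tau^\dag_{\rho''}$, which by the identifications above simplifies to $\tau_\rho \cap (\tau^\dag_{\rho''} \cap \Upgamma_+) = \tau_\rho \cap \tau_{\rho''}$, a face of $\Upgamma_+$. By \ref{X:cone-face-correspondence}(ii) applied to $\Sigma^\dag$, $\dim(\varsigma^\dag_\sigma) = n - \dim(\sigma) = n-2$, so $\tau_\rho \cap \tau_{\rho''}$ is a common codimension $1$ face of the two facets $\tau_\rho, \tau_{\rho''} \prec^1 \Upgamma_+$, meaning $\tau_\rho \frown \tau_{\rho''}$. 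Finally, \ref{X:convention-4.1-A} gives $\rho \frown \rho''$ in $\Sigma$, so $\sigma = \rho + \rho''$ lies in $\Sigma[2]$. The main challenge is essentially bookkeeping: one must keep straight the two parallel duality pictures (facets/rays in $\Sigma$ versus in $\Sigma^\dag$), and deploy the hypothesis on $\rho$ at precisely the moment it is needed to funnel information back into $\Upgamma_+$.
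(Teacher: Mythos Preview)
Your proof is correct and follows essentially the same approach as the paper's: in (i) you use $\tau_\rho^\dag = \tau_\rho$ to land $\varsigma_\sigma^\dag$ (or a point in its relative interior) back in $\Upgamma_+$, then show each $\rho' \in \sigma[1]$ is an extremal ray of a cone in $\Sigma$; in (ii) you identify $\varsigma_\sigma^\dag = \tau_\rho \cap \tau_{\rho''}$ as an $(n-2)$-dimensional face of $\Upgamma_+$, exactly as the paper does. The only cosmetic difference is that you phrase (i) via a point $\ba$ and Lemma~\ref{L:extremal-rays}, whereas the paper phrases it via the face $\varsigma$ and the equivalence $\varsigma \prec \tau \Leftrightarrow \varsigma \prec \tau^\dag$.
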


\begin{proof}
By Lemma~\ref{L:facets-dagger}(ii), the facet $\tau_\rho \prec^1 \Upgamma$ dual to $\rho \in \Sigma[1]$ remains a facet of $\Upgamma^\dag$. Therefore, the face $\varsigma \prec \Upgamma^\dag$ dual to $\sigma$, being a face of $\tau_\rho$, remains a face of $\Upgamma^\dag$. Consequently, for every $\tau \prec^1 \Upgamma$ such that $\tau \notin \dB$, we have the following equivalences: \begin{equation}\label{EQ:old-cone}
    \varsigma \; \prec \; \tau \; \iff \; \varsigma \; \subset \; H_{\bu_\tau,N_\tau} \; \iff \; \varsigma \; \prec \; \tau^\dag.
\end{equation}
The reverse implication in \eqref{EQ:old-cone} means that $\sigma$ is inscribed in the cone in $\Sigma$ dual to the face $\varsigma \prec \Upgamma$, as desired.

If $\dim(\sigma) = 2$, let $\rho$ and $\rho'$ be the extremal rays of $\sigma$. By Corollary~\ref{C:cone-face-correspondence-A}, $\varsigma = \tau_\rho^\dag \cap \tau_{\rho'}^\dag$. By \eqref{EQ:old-cone}, $\varsigma$ is a face of both $\tau_\rho$ and $\tau_{\rho'}$. Since $\varsigma$ is a $(n-2)$-dimensional face of $\Upgamma$, $\varsigma$ is a face of \emph{exactly two} facets of $\Upgamma$, which by the preceding sentence are necessarily $\tau_\rho$ and $\tau_{\rho'}$. This means that the cone in $\Sigma$ dual to the face $\varsigma \prec \Upgamma$ is generated by $\rho$ and $\rho'$, i.e. is equal to $\sigma$. In particular, $\sigma$ is a cone in $\Sigma$.
\end{proof}

By part (i) of the preceding lemma, we see that if $\sigma$ is a new cone in $\Sigma^\dag$, then all its extremal rays must be adjacent in $\Sigma$ to some ray in $\ChMingHao{\Sigma[1]|_\dB}$. The next proposition refines that observation. We first introduce some notation:

\begin{xpar}[An equivalence relation]
Let $\sim$ denote the equivalence closure of $\frown$ (cf. \ref{X:convention-2.1.A} and \ref{X:convention-4.1-A}) on \emph{either $\dB$ or $\ChMingHao{\Sigma[1]|_\dB}$}. We also let $\dB_{/\sim}$ denote the set of equivalence classes of $\dB$ under $\sim$.
\end{xpar}

% NOT NEEDED ANYMORE
%\begin{proposition}\label{P:new-cone}
%Let $\sigma$ be a new cone in $\Sigma^\dag$. Then there exists $\lozenge \in \ChMingHao{\Sigma[1]|_\dB}_{/\sim}$ such that every extremal ray of $\sigma$ is adjacent \underline{in $\Sigma$} to some ray in $\lozenge$.
%\end{proposition}

\begin{proposition}\label{P:new-cone-detailed}
Let \ChMingHao{$k := \#\dB_{/\sim}$}, and let $\dB_{/\sim} = \{\intercal_1,\intercal_2,\dotsc,\intercal_k\}$ be any total order on $\dB_{/\sim}$. For each $\ell \in [k]$, let $\intercal_{\leq \ell} := \bigcup\{\intercal_j \colon j \leq \ell \}$. Then for any new cone $\sigma$ in $\Sigma^{\dag,\ChMingHao{\dB}}$, there exists a unique $\ell \in [k]$ such that: \begin{enumerate}
    \item $\sigma$ cannot be inscribed in any cone in $\Sigma^{\dag,\intercal_{\leq \ell-1}}$ and
    \item $\sigma$ is a cone in $\Sigma^{\dag,\intercal_{\leq \ell}}$. 
\end{enumerate}
Moreover, every extremal ray of $\sigma$ is adjacent \underline{in $\Sigma$} to some ray in \ChMingHao{$\Sigma[1]|_{\intercal_\ell}$}.
\end{proposition}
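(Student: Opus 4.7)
Plan: I would organize the proof into three parts: uniqueness of $\ell$, existence (establishing both (i) and (ii) at the chosen $\ell$), and the adjacency claim.

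\emph{Uniqueness.} Suppose $\ell<\ell'$ both satisfy (i) and (ii). By (ii) at $\ell$, $\sigma\in\Sigma^{\dag,\intercal_{\leq\ell}}$; and since $\Sigma^{\dag,\intercal_{\leq\ell}}$ refines the coarser fan $\Sigma^{\dag,\intercal_{\leq\ell'-1}}$, the cone $\sigma$ is contained in some $\sigma'\in\Sigma^{\dag,\intercal_{\leq\ell'-1}}$. Because $\sigma[1]\subset\Sigma^\dag[1]\subset\Sigma^{\dag,\intercal_{\leq\ell'-1}}[1]$, each extremal ray of $\sigma$ is also a ray of this coarser fan contained in $\sigma'$, hence a face of $\sigma'$ by the fan axioms. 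Therefore $\sigma[1]\subset\sigma'[1]$, i.e.\ $\sigma\sqsubset\sigma'$, contradicting (i) at $\ell'$.

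\emph{Existence.} I would take $\ell$ to be the smallest index in $[k]$ with $\sigma\in\Sigma^{\dag,\intercal_{\leq\ell}}$ (well-defined since $\sigma\in\Sigma^\dag=\Sigma^{\dag,\intercal_{\leq k}}$), which makes (ii) automatic. To verify (i), suppose for contradiction that $\sigma\sqsubset\sigma'$ for some $\sigma'\in\Sigma^{\dag,\intercal_{\leq\ell-1}}$; then $\sigma\subset\sigma'\subset\sigma''$ for some $\sigma''\in\Sigma^{\dag,\intercal_{\leq\ell}}$, and $\sigma$ is a face of $\sigma''$ by fan axioms. The crux is that no ray $\rho^*\in\Sigma[\intercal_\ell]$ can be contained in $\sigma$: if one were, then $\rho^*\subset\sigma\subset\sigma'$ in $\Sigma^{\dag,\intercal_{\leq\ell-1}}$ would, by the fan axioms, force $\rho^*\in\sigma'[1]$; yet $\bu_{\rho^*}\in\sigma$ expresses $\bu_{\rho^*}$ as a nonnegative (and not identically zero) combination of $\{\bu_\rho\colon\rho\in\sigma[1]\}\subset\{\bu_\rho\colon\rho\in\sigma'[1]\smallsetminus\{\rho^*\}\}$ (the inclusion since $\rho^*\notin\sigma[1]$, as $\sigma[1]\subset\Sigma^\dag[1]$ while $\rho^*\in\Sigma[\dB]$), contradicting the extremality of $\rho^*$ in $\sigma'$. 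Given that no new ray enters $\sigma$, the correspondence between cones of the fans and faces of the polyhedra then implies $\sigma$ is itself a cone of $\Sigma^{\dag,\intercal_{\leq\ell-1}}$ --- contradicting the minimality of $\ell$.

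\emph{Adjacency.} I apply Lemma~\ref{L:old-cone}(i) to the transition $\Sigma^{\dag,\intercal_{\leq\ell-1}}\to\Sigma^{\dag,\intercal_{\leq\ell}}$ (playing the roles of $\Sigma\to\Sigma^\dag$, with $\intercal_\ell$ as $\dB$): condition (i) says $\sigma$ is not inscribable in any cone of $\Sigma^{\dag,\intercal_{\leq\ell-1}}$, so the contrapositive forces every extremal ray $\rho$ of $\sigma$ to be adjacent in $\Sigma^{\dag,\intercal_{\leq\ell-1}}$ to some $\rho_\tau$ with $\tau\in\intercal_\ell$. To upgrade this adjacency to $\Sigma$: since $\tau\in\intercal_\ell$ lies in a different equivalence class from every facet in $\intercal_{\leq\ell-1}$, $\tau$ is non-adjacent to all such facets, and Lemma~\ref{L:facets-dagger}(ii) gives $\tau^\dag=\tau$ as a facet of $\Upgamma_+^{\dag,\intercal_{\leq\ell-1}}$. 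Hence the codimension-two face $\tau_\rho^\dag\cap\tau$ of $\Upgamma_+^{\dag,\intercal_{\leq\ell-1}}$ lies in $\Upgamma_+$, where it equals $\tau_\rho\cap\tau$ by Lemma~\ref{L:facets-dagger}(i) --- giving a codimension-two face of $\Upgamma_+$, i.e.\ $\rho\frown\rho_\tau$ in $\Sigma$. The main obstacle I anticipate is the last step of \emph{Existence}: rigorously concluding that $\sigma$ remains a cone of $\Sigma^{\dag,\intercal_{\leq\ell-1}}$ once no ray of $\Sigma[\intercal_\ell]$ lies in $\sigma$ requires care in the potentially non-simplicial setting.
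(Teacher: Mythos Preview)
Your overall strategy differs from the paper's: you pin down $\ell$ directly as the least index with $\sigma\in\Sigma^{\dag,\intercal_{\leq\ell}}$, whereas the paper argues by induction on $k=\#\dB_{/\sim}$, peeling off $\intercal_k$ at each step. Your \emph{Adjacency} argument is correct and essentially matches the paper's (which packages the same computation into Lemma~\ref{L:old-cone}(ii)). Your \emph{Uniqueness} argument is also fine in substance, though the blanket claim that $\Sigma^{\dag,\intercal_{\leq\ell}}$ refines $\Sigma^{\dag,\intercal_{\leq\ell'-1}}$ is not established in the paper and is stronger than needed: what you actually use is that \emph{this particular} $\sigma$ (whose rays avoid all of $\Sigma[\dB]$) sits inside a cone of $\Sigma^{\dag,\intercal_{\leq\ell'-1}}$, and that follows directly from Lemma~\ref{L:old}(i) applied iteratively (or from the concavity of $\varphi^{\dag,\intercal_{\leq\ell'-1}}$ together with $\varphi^{\dag,\intercal_{\leq\ell'-1}}(\bu_\rho)=N_\rho$ on each ray of $\sigma$).

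The gap you anticipated in \emph{Existence} is real. Knowing that no ray $\rho^*\in\Sigma[\intercal_\ell]$ lies in the set $\sigma$ does not force $\sigma\in\Sigma^{\dag,\intercal_{\leq\ell-1}}$: the passage from $\Sigma^{\dag,\intercal_{\leq\ell}}$ back to $\Sigma^{\dag,\intercal_{\leq\ell-1}}$ is not a stellar subdivision along those rays, so the walls of $\Sigma^{\dag,\intercal_{\leq\ell-1}}$ can still slice through $\sigma$ even when the rays themselves miss it. Relatedly, your step ``$\sigma'\subset\sigma''$ for some $\sigma''\in\Sigma^{\dag,\intercal_{\leq\ell}}$'' invokes refinement for $\sigma'$, which may well have rays in $\Sigma[\intercal_\ell]$; there the refinement argument breaks. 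The paper's fix is to choose $\sigma'$ \emph{smallest} in $\Sigma^{\dag,\intercal_{\leq\ell-1}}$ with $\sigma\sqsubset\sigma'$, observe that $\sigma'$ must itself be new (else $\sigma$ would be old), and then apply Lemma~\ref{L:old-cone}(i) to $\sigma'$: every ray of $\sigma'$ is adjacent in $\Sigma$ to some ray of $\Sigma[\intercal_{\leq\ell-1}]$, hence none lies in $\Sigma[\intercal_\ell]$ by the equivalence-class structure. Now Lemma~\ref{L:old}(i) (with $\Upgamma_+^{\dag,\intercal_{\leq\ell-1}}$ in the role of $\Upgamma_+$ and $\intercal_\ell$ in the role of $\dB$) shows $\sigma'$ is already a cone of $\Sigma^{\dag,\intercal_{\leq\ell}}$; since $\sigma\subset\sigma'$ are both cones there with $\sigma$ in no proper face of $\sigma'$, one gets $\sigma=\sigma'\in\Sigma^{\dag,\intercal_{\leq\ell-1}}$, contradicting minimality of $\ell$. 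In short, the right object to control is $\sigma'[1]\cap\Sigma[\intercal_\ell]$, not the rays of $\Sigma[\intercal_\ell]$ lying in $\sigma$.
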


\begin{remark}
\ChMingHao{We remind the reader that for any $\ell \in [k]$, $\Sigma^{\dag,\intercal_{\leq \ell}}$ is the $\intercal_{\leq \ell}$-cut of $\Sigma$, as in Definition~\ref{D:B-cut}. Note that if $\ell > 1$, $\Sigma^{\dag,\intercal_{\leq \ell}}$ is also the $\intercal_\ell$-cut of $\Sigma^{\dag,\intercal_{\leq \ell-1}}$. This observation will be used for the purposes of induction in the proof below. Finally, note that $\Sigma^{\dag,\intercal_{\leq k}}$ is simply $\Sigma^{\dag,\dB}$.}
\end{remark}

% NOT NEEDED ANYMORE
%\begin{lemma}
%Let $\sigma$ be a cone in $\Sigma$, and let $\rho, \rho' \in \sigma[\dB]$. If there exists a cone in $\Sigma^\dag$ containing both $\rho$ and $\rho'$, then $\rho \sim \rho'$ in $\sigma[\dB]$.
%\end{lemma}

%\begin{proof}
%We claim that the $2$-dimensional cone $\rho+\rho'$ intersects $\sigma^\dag := \sum \sigma[1] \smallsetminus \ChMingHao{\Sigma[1]|_\dB}$ trivially, i.e. only at $\bzero$. To see the claim, assume it does not hold: then there exists $\bzero \neq \bu \in \rho$ and $\bzero \neq \bu' \in \rho'$ such that $\bu+\bu' \in \sigma^\dag$. Then we have: \begin{align*}
    %\varphi^\dag(\bu) + \varphi^\dag(\bu') \; &< \; \varphi(\bu) + \varphi(\bu') \qquad \textrm{\small{by Lemma~\ref{L:old}(ii)}}\\
    %&= \; \varphi(\bu + \bu') \qquad \;\;\;\;\; \textrm{\small{by Corollary~\ref{C:linearity-cone}}} \\
    %&= \; \varphi^\dag(\bu + \bu') \qquad \;\;\;\; \textrm{\small{by Lemma~\ref{L:old}(i).}}
%\end{align*}
%By Corollary~\ref{C:linearity-cone}, the above would imply that $\rho$ and $\rho'$ cannot be contained in the same cone in $\Sigma^\dag$, a contradiction.
%\end{proof}

\begin{proof}
Proceed by induction on $k = \#\dB_{/\sim}$. The base case $k = 1$ is supplied by Lemma~\ref{L:old-cone}(i). If $k > 1$, we consider two cases: \begin{enumerate}
    \item[(a)] If $\sigma$ can be inscribed in some cone in $\Sigma^{\dag,\intercal_{\leq k-1}}$, then let $\sigma'$ be the \emph{smallest} cone in $\Sigma^{\dag,\intercal_{\leq k-1}}$ such that $\sigma \sqsubset \sigma'$. Then we claim $\sigma = \sigma'$. Indeed, by Lemma~\ref{L:old-cone}(i), all the extremal rays of $\sigma'$ are adjacent in $\Sigma$ to some ray in \ChMingHao{$\Sigma[1]|_{\intercal_{\leq k-1}}$}, and hence, $\sigma'[1] \cap \ChMingHao{\Sigma[1]|_{\intercal_k}} = \varnothing$. By Lemma~\ref{L:old}(i), $\sigma'$ is therefore \ChMingHao{also} a cone in $\Sigma^{\dag,\intercal_{\leq k}} = \Sigma^{\dag,\ChMingHao{\dB}}$. Given that $\sigma \subset \sigma'$ are both cones in $\Sigma^{\dag,\ChMingHao{\dB}}$, and $\sigma$ \emph{does not lie in any proper face of $\sigma'$} but can be inscribed in $\sigma'$, we must have $\sigma = \sigma'$, as desired. Therefore, $\sigma$ was already a new cone in $\Sigma^{\dag,\intercal_{\leq k-1}}$, and the proposition follows by induction hypothesis.
    
    \item[(b)] Otherwise, \ChMingHao{we only need to check the last sentence of the proposition.} By Lemma~\ref{L:old-cone}(i), every extremal ray of $\sigma$ is adjacent \underline{in $\Sigma^{\dag,\intercal_{\leq k-1}}$} to some ray in \ChMingHao{$\Sigma[1]|_{\intercal_k}$}. Since every ray in \ChMingHao{$\Sigma[1]|_{\intercal_k}$} is by definition not adjacent \underline{in $\Sigma$} to any ray in \ChMingHao{$\Sigma[1]|_{\intercal_{\leq k-1}}$}, Lemma~\ref{L:old-cone}(ii) says that every extremal ray of $\sigma$ is in fact adjacent \underline{in $\Sigma$} to some ray in \ChMingHao{$\Sigma[1]|_{\intercal_k}$}.
\end{enumerate}
This completes the induction.
\end{proof}

\begin{remark}\label{R:new-cone-detailed}
Given that the total order on $\dB_{/\sim}$ plays an auxiliary role in the above proof, the following stronger assertion should be true. Namely, for any new cone $\sigma$ in $\Sigma^\dag$, there exists a unique $\intercal \in \dB_{/\sim}$ such that $\sigma$ was already a new cone in $\Sigma^{\dag,\intercal}$ (so every extremal ray of $\sigma$ is adjacent to some ray in $\Sigma[\intercal]$). However, this stronger assertion is not needed for this paper.
\end{remark}

We conclude this section with one more crucial observation:

\begin{lemma}\label{L:new-cone}
For a cone $\sigma$ in $\overline{\Sigma}^\dag$, the following statements are equivalent: \begin{enumerate}
    \item $\sigma$ is new.
    \item $\bigcap\{\tau_\rho \colon \rho \in \sigma[1]\} = \varnothing$.
\end{enumerate}
Moreover, if $\sigma$ is old and not contained in any coordinate hyperplane $\{\be_i^\vee = 0\}$ in $N_\RR$, then $\bigcap\{\tau_\rho \colon \rho \in \sigma[1]\}$ is a compact face of $\Upgamma$.
\end{lemma}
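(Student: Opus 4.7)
The plan is to exploit a single key observation: any old cone in $\overline{\Sigma}^\dag$ is in fact already a cone in $\Sigma$. Granted this, the equivalence (i) $\Leftrightarrow$ (ii) will reduce immediately to the cone--face correspondence of \S\ref{2.2}, and the moreover clause will follow from the characterisation of compact faces in Corollary~\ref{C:cone-face-correspondence-B}.

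First I would establish the key observation. Since every $\sigma \in \overline{\Sigma}^\dag$ is a convex rational polyhedral cone, it equals the sum $\sum \sigma[1]$ of its extremal rays. If $\sigma$ is old, by definition $\sigma[1] \subset \sigma'[1]$ for some $\sigma' \in \Sigma$; then $\sigma = \sum\sigma[1]$ is the cone generated by a subset of the extremal rays of $\sigma'$, hence a face of $\sigma'$, and so $\sigma \in \Sigma$.

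Next I would handle the two directions of the equivalence. For $\neg$(i) $\Rightarrow \neg$(ii): applying Corollary~\ref{C:cone-face-correspondence-A} to the cone $\sigma \in \Sigma$ yields $\bigcap\{\tau_\rho \colon \rho \in \sigma[1]\} = \varsigma_\sigma$, which is a face of $\Upgamma_+$ and in particular non-empty. For $\neg$(ii) $\Rightarrow \neg$(i): pick any $\ba \in \bigcap\{\tau_\rho \colon \rho \in \sigma[1]\}$. For each $\rho \in \sigma[1]$, we have $\ba \in \tau_\rho = \varsigma_{\bu_\rho} \prec^1 \Upgamma_+$, so Lemma~\ref{L:extremal-rays} implies that $\bu_\rho$ generates an extremal ray of $\sigma_\ba$. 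Since $\sigma_\ba \in \Sigma$ by Corollary~\ref{C:structure-of-cones}, we obtain $\sigma[1] \subset \sigma_\ba[1]$, i.e.\ $\sigma \sqsubset \sigma_\ba$, so $\sigma$ is old.

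For the moreover clause, the key observation again puts $\sigma \in \Sigma$, so $\bigcap\{\tau_\rho \colon \rho \in \sigma[1]\} = \varsigma_\sigma$, and compactness is then precisely Corollary~\ref{C:cone-face-correspondence-B}. I do not foresee any substantial obstacle; the one point deserving care is the key observation itself, since \emph{a priori} an old cone could have strictly fewer extremal rays than the cone of $\Sigma$ in which it is inscribed. This concern is immediately resolved by the fact that a convex rational polyhedral cone is uniquely determined by---and coincides with the sum of---its own extremal rays.
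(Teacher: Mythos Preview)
Your ``key observation'' is false, and both the direction $\neg$(i) $\Rightarrow$ $\neg$(ii) and the moreover clause collapse with it. The claim that a cone generated by a subset of the extremal rays of $\sigma' \in \Sigma$ is automatically a \emph{face} of $\sigma'$ holds only when $\sigma'$ is simplicial. For a non-simplicial $\sigma'$ (say a $3$-dimensional cone with four extremal rays, which certainly occurs in normal fans of Newton polyhedra), the cone spanned by two opposite extremal rays passes through the interior of $\sigma'$ and is not a face. Such a cone lies in $\overline{\Sigma}$ by definition but not in $\Sigma$. So an old cone of $\overline{\Sigma}^\dag$ need not be a cone of $\Sigma$, and you cannot invoke Corollary~\ref{C:cone-face-correspondence-A} directly on $\sigma$.

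Your argument for $\neg$(ii) $\Rightarrow$ $\neg$(i) is correct and is essentially the paper's. For the other direction, the fix is simple and does not require your key observation: if $\sigma$ is old, choose $\sigma' \in \Sigma$ with $\sigma[1] \subset \sigma'[1]$; then
\[
\bigcap\bigl\{\tau_\rho \colon \rho \in \sigma[1]\bigr\} \;\supset\; \bigcap\bigl\{\tau_\rho \colon \rho \in \sigma'[1]\bigr\} \;=\; \varsigma_{\sigma'} \;\neq\; \varnothing,
\]
where the last equality is Corollary~\ref{C:cone-face-correspondence-A} applied to $\sigma'$ (which \emph{is} in $\Sigma$). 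For the moreover clause, one likewise works not with $\sigma$ but with the cone $\overline{\sigma} \in \Sigma$ dual to the face $\underline{\varsigma} := \bigcap\{\tau_\rho \colon \rho \in \sigma[1]\}$: one checks $\sigma \sqsubset \overline{\sigma}$, so $\sigma \subset \overline{\sigma}$, whence $\overline{\sigma}$ is not contained in any coordinate hyperplane either, and Corollary~\ref{C:cone-face-correspondence-B} applied to $\overline{\sigma}$ gives compactness of $\underline{\varsigma} = \varsigma_{\overline{\sigma}}$. (This is also the content of Remark~\ref{R:preimage-of-origin}.)
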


\begin{proof}
For (i)$\impliedby$(ii), suppose $\sigma$ is inscribed in a cone $\sigma'$ in $\Sigma$. By Corollary~\ref{C:cone-face-correspondence-A}, the face $\varsigma' \prec \Upgamma$ dual to $\sigma'$ is $\varsigma' = \bigcap\{\tau_\rho \colon \rho \in \sigma'[1]\}$. Since $\sigma[1] \subset \sigma'[1]$, we have $\varsigma' = \bigcap\bigl\{\tau_\rho \colon \rho \in \sigma'[1]\bigr\} \subset \bigcap\bigl\{\tau_\rho \colon \rho \in \sigma[1]\bigr\}$, so that in particular, the latter must be non-empty.

For (i)$\implies$(ii), set $\underline{\varsigma} := \bigcap\{\tau_\rho \colon \rho \in \sigma[1]\}$. If $\underline{\varsigma} \neq \varnothing$, then $\underline{\varsigma}$ is a (non-empty) face of $\Upgamma$. In that case we claim that $\sigma$ is inscribed in the cone $\overline{\sigma}$ in $\Sigma$ dual to $\underline{\varsigma} \prec \Upgamma$, a contradiction. Indeed, letting $\varsigma$ denote the face of $\Upgamma^\dag$ dual to $\sigma \in \Sigma^\dag$, the claim amounts to the following implication for every $\tau \prec^1 \Upgamma$: \[
    \varsigma \prec \tau^\dag \; \implies \; \underline{\varsigma} \prec \tau.
\]
That implication follows from $\{\tau^\dag \prec^1 \Upgamma^\dag \colon \varsigma \prec \tau^\dag\} = \{\tau_\rho^\dag \colon \rho \in \sigma[1]\}$ (Corollary~\ref{C:cone-face-correspondence-A}) and the definition of $\underline{\varsigma}$. Finally, for the last statement, $\overline{\sigma}$ is also not contained in any coordinate hyperplane in $N_\RR$. By Corollary~\ref{C:cone-face-correspondence-B}, $\underline{\varsigma}$ is therefore compact.
\end{proof}

\subsection{Dropping a set of \texorpdfstring{$B_1$}{B1}-facets with consistent base directions}\label{4.2}

In this section, let $f \in \kk[x_1,\dotsc,x_n]$ be a non-degenerate polynomial. We specialize the earlier discussion in \S\ref{4.1} to the case when $\Upgamma$ is the Newton polyhedron $\Upgamma(f)$ of $f$, and $\dB$ is a set of $B_1$-facets of $\Upgamma(f)$ with consistent base directions, cf. Definition~\ref{D:consistent-B1-facets}. As in \ref{3.1}, let $\Sigma(f)$ denote the normal fan of $\Upgamma(f)$. Before that, we state (without proof) some easy observations:

\begin{xpar}\label{X:B_1-facets}
Suppose $\Upgamma(f)$ has a $B_1$-facet $\tau$ with apex $\bv$ and corresponding base direction $i \in [n]$. Let $J(\tau) := \{j \in [n] \colon \tau$ is non-compact in the $j$\textsuperscript{th} coordinate$\}$ (cf. Corollary~\ref{C:cone-face-correspondence-B}), so that by definition $i \notin J(\tau)$. Then: \begin{enumerate}
    \item Let $\tau^c$ denote the convex hull of $\vertex(\tau) = \vertex(H_i \cap \tau) \cup \{\bv\}$ in $M_\RR^+$. Then $\tau = \tau^c + \ChMingHao{\sum_{j \in J(\tau)}}{\RR_{\geq 0}\be_j}$.
    \item $H_i \cap \tau \prec^1 \tau$.
    \item $\tau$ is not contained in any translate $m\be_k+H_k$ of any coordinate hyperplane $H_k$ in $M_\RR$.
    \item The facet $\tau_i$ of $\Upgamma(f)$ dual to the ray $\ChMingHao{\langle\be_i\rangle}$ in $\Sigma(f)$ is $H_i \cap \Upgamma(f)$. In other words, $N_{\tau_i} = 0$ (recall \ref{X:alt-characterization} for definition of $N_{\tau_i}$).
\end{enumerate}
\end{xpar}

\begin{xpar}\label{X:consistent-B_1-facets}
For a set $\dB$ of $B_1$-facets of $\Upgamma(f)$, the following are equivalent: \begin{enumerate}
    \item $\dB$ is a set of $B_1$-facets of $\Upgamma(f)$ with consistent base directions.
    \item For every $\intercal \in \dB_{/\sim}$, there exists $\bv \in \bigcap\{\vertex(\tau) \colon \tau \in \intercal\}$ and $i \in [n]$ such that every $\tau$ in $\intercal$ is a $B_1$-facet with apex $\bv$ and corresponding base direction $i$.
\end{enumerate}
In (ii), we call $\bv$ an {\sffamily apex} of $\intercal$ with corresponding {\sffamily base direction} $i \in [n]$.
\end{xpar}

\begin{xpar}[Conventions for this section]\label{X:convention-4.2-A}
Let $\Upgamma^\dag$ denote the $\dB$-cut of $\Upgamma(f)$, and let $\Sigma^\dag$ denote its normal fan in $N_\RR$. We also fix, for each $\intercal \in \dB_{/\sim}$, an apex $\bv_\intercal$ of $\intercal$ and denote the corresponding base direction by $b(\intercal)$. For the remainder of this section, we fix a new cone $\sigma$ in $\Sigma^\dag$, and let $\varsigma$ denote the face of $\Upgamma^\dag$ dual to $\sigma$. With respect to an auxiliary total order $\dB_{/\sim} = \{\intercal_1,\intercal_2,\dotsc,\intercal_k\}$ on $\dB_{/\sim}$, let $\ell$ be the unique natural number in $[k]$ for which $\sigma$ satisfies the properties stated in Proposition~\ref{P:new-cone-detailed}. We then set $\intercal := \intercal_\ell$.
\end{xpar}

\begin{proposition}\label{P:new-cone-II}
For each $\rho \in \sigma[1]$, $\tau_\rho$ is adjacent to some facet in $\intercal$. Moreover: \begin{enumerate}
    \item $\ChMingHao{\langle} \be_{b(\intercal)} \ChMingHao{\rangle}$ is an extremal ray of $\sigma$.
    
    \item \ChMingHao{The cone $\sigma^\circ$ in $N_\RR$ generated by the rays in} \[
        \qquad \qquad \ChMingHao{\sigma[1] \smallsetminus \{\langle \be_{b(\intercal)}\rangle\}}
    \]
    is a face of $\sigma$ (and hence is a cone in $\Sigma^\dag$) that can be inscribed in the maximal cone in $\Sigma(f)$ dual to the vertex $\bv_\intercal$ of $\Upgamma(f)$.
    
    \item The face \[
        \qquad \qquad \underline{\varsigma}^\circ \; := \; \bigcap\bigl\{\tau_\rho \colon \rho \in \sigma^\circ[1]\bigr\} \; \prec \; \Upgamma(f)
    \]
    has empty intersection with $H_{b(\intercal)}$. Moreover, for every $\tau \in \intercal$, $\underline{\varsigma}^\circ \cap \tau$ is either $\{\bv_\intercal\}$ or a non-compact face of $\tau$ containing $\bv_\intercal$.
\end{enumerate}
\end{proposition}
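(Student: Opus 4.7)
The plan is to reduce everything to a single combinatorial \emph{dichotomy} on the rays of $\sigma$, then derive the three parts in sequence. First, the opening sentence follows directly from Proposition~\ref{P:new-cone-detailed}: every $\rho \in \sigma[1]$ is adjacent in $\Sigma(f)$ to some ray in $\Sigma(f)[\intercal]$, which via the facet-ray correspondence (\ref{X:cone-face-correspondence}, \ref{X:convention-4.1-A}) means $\tau_\rho \frown \tau$ in $\Upgamma_+(f)$ for some $\tau \in \intercal$. To sharpen this into the dichotomy ``$\rho = \RR_{\geq 0}\be_{b(\intercal)}$ or $\bv_\intercal \in \tau_\rho$'', I would inspect the facet $\tau \cap \tau_\rho \prec^1 \tau$: by the $B_1$-property (Definition~\ref{D:B1-facets}, \ref{X:B_1-facets}), every facet of $\tau$ either contains $\bv_\intercal$ or coincides with the unique base $H_{b(\intercal)} \cap \tau$. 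In the second case, $\tau \cap \tau_\rho$ is a codimension-$2$ face of $\Upgamma_+(f)$ contained in exactly two facets, namely $\tau$ and $\tau_{b(\intercal)} = H_{b(\intercal)} \cap \Upgamma_+(f)$; since $\tau \in \dB$ but $\tau_\rho \notin \dB$ (and $\tau_{b(\intercal)} \notin \dB$ by \ref{X:B_1-facets}(iii)), we are forced to have $\tau_\rho = \tau_{b(\intercal)}$, i.e., $\rho = \RR_{\geq 0}\be_{b(\intercal)}$.

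With the dichotomy in hand, (i) becomes a one-line contradiction: were $\RR_{\geq 0}\be_{b(\intercal)} \notin \sigma[1]$, every ray would satisfy $\bv_\intercal \in \tau_\rho$, placing $\bv_\intercal$ in $\bigcap\{\tau_\rho : \rho \in \sigma[1]\}$, which is empty by Lemma~\ref{L:new-cone} since $\sigma$ is new. For (ii), the genuine difficulty is that $\sigma^\circ$ needs to be an honest face of $\sigma$, which is not automatic when $\sigma$ is non-simplicial. The plan is to introduce the maximal cone $\sigma_\star$ of $\Sigma(f)$ dual to $\bv_\intercal$ --- whose rays, by Corollary~\ref{C:cone-face-correspondence-A}, are precisely $\{\rho_\tau : \bv_\intercal \in \tau \prec^1 \Upgamma_+(f)\}$ --- and its cut $\sigma_\star^\dag := \sum\sigma_\star[1] \smallsetminus \Sigma(f)[\dB]$, which is a cone in $\Sigma^\dag$ by Lemma~\ref{L:old}(i). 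Then I would intersect: $\sigma \cap \sigma_\star^\dag$ is automatically a common face of both (standard fan axiom), and its extremal rays are $\sigma[1] \cap \sigma_\star^\dag[1]$. The dichotomy --- combined with the observation $\bv_\intercal \notin \tau_{b(\intercal)}$, as the $b(\intercal)$-th coordinate of $\bv_\intercal$ is $1$ --- identifies this set of rays as exactly $\sigma^\circ[1]$. Hence $\sigma \cap \sigma_\star^\dag = \sigma^\circ$, giving simultaneously that $\sigma^\circ$ is a face of $\sigma$ and that $\sigma^\circ \sqsubset \sigma_\star^\dag \sqsubset \sigma_\star$.

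Finally, for (iii), since $\tau_{\RR_{\geq 0}\be_{b(\intercal)}} = H_{b(\intercal)} \cap \Upgamma_+(f)$, we compute \[
    \underline{\varsigma}^\circ \cap H_{b(\intercal)} \; = \; \underline{\varsigma}^\circ \cap \tau_{b(\intercal)} \; = \; \bigcap\bigl\{\tau_\rho : \rho \in \sigma[1]\bigr\} \; = \; \varnothing
\]
by Lemma~\ref{L:new-cone}. For the last clause, fix $\tau \in \intercal$: by (ii) we have $\bv_\intercal \in \underline{\varsigma}^\circ$, and obviously $\bv_\intercal \in \tau$, so $\underline{\varsigma}^\circ \cap \tau$ is a non-empty face of $\tau$ (the intersection of faces of $\Upgamma_+(f)$ contained in $\tau$) containing $\bv_\intercal$. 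By \ref{X:B_1-facets}(i), every vertex of $\tau$ other than $\bv_\intercal$ lies in $H_{b(\intercal)}$, which $\underline{\varsigma}^\circ$ now avoids; hence $\bv_\intercal$ is the unique vertex of $\underline{\varsigma}^\circ \cap \tau$. A polyhedron with a unique vertex is either that point alone or non-compact of positive dimension, giving the stated dichotomy. The principal obstacle throughout is step (ii): without the intersection trick with $\sigma_\star^\dag$, it is genuinely unclear why removing one extremal ray from a potentially non-simplicial cone and taking the positive span should yield a face of that cone.
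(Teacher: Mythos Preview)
Your proof is correct. The arguments for the opening sentence, for (i), and for (iii) are essentially the paper's, with your dichotomy spelled out in slightly more detail (the paper leaves implicit why $\tau \cap \tau_\rho = H_{b(\intercal)} \cap \tau$ forces $\rho = \RR_{\geq 0}\be_{b(\intercal)}$).

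For (ii), you take a genuinely different route. The paper argues \emph{on the polytope side}: it introduces the face $\varsigma^\circ := \bigcap\{\tau_\rho^\dag : \rho \in \sigma^\circ[1]\} \prec \Upgamma_+^\dag$ and checks directly, via Corollary~\ref{C:cone-face-correspondence-A}, that the facets of $\Upgamma_+^\dag$ through $\varsigma^\circ$ are precisely $\{\tau_\rho^\dag : \rho \in \sigma^\circ[1]\}$; the point is that any such facet must already contain $\varsigma \prec \varsigma^\circ$, hence be dual to some ray of $\sigma$, and that ray cannot be $\RR_{\geq 0}\be_{b(\intercal)}$ since $\bv_\intercal \in \varsigma^\circ \smallsetminus H_{b(\intercal)}$. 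You instead argue \emph{on the fan side}: bring in the old cone $\sigma_\star^\dag \in \Sigma^\dag$ (Lemma~\ref{L:old}(i)) and identify $\sigma^\circ$ with the fan-intersection $\sigma \cap \sigma_\star^\dag$, which is automatically a common face. This is arguably cleaner --- it replaces a facet-by-facet verification with a single application of the fan axiom --- and it makes transparent why the non-simplicial worry you flag is harmless. The paper's approach has the side benefit of introducing $\varsigma^\circ$ explicitly, which is used downstream (e.g.\ in~\ref{X:new-cone-dual-to-compact}); your approach yields the same $\sigma^\circ$, so nothing is lost, but you would need to define $\varsigma^\circ$ separately when it is needed later.
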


\begin{proof}
The first statement is a restatement of the last property in Proposition~\ref{P:new-cone-detailed}. For $\rho \in \sigma[1]$, let $\tau$ be a facet in $\intercal$ adjacent to $\tau_\rho$, so $\tau \cap \tau_\rho$ is a facet of $\tau$. If $\rho \neq \ChMingHao{\langle} \be_{b(\intercal)} \ChMingHao{\rangle}$, $\tau \cap \tau_\rho$ cannot be equal to $H_{b(\intercal)} \cap \tau$, and hence must contain $\bv_\intercal$ (cf. \ref{X:B_1-facets}(i)). In particular, $\bv_\intercal \in \tau_\rho$. Therefore, \begin{equation}\label{EQ:new-cone-II}
    \bv_\intercal \; \in \; \bigcap\left\{\tau_\rho \colon \ChMingHao{\rho \in \sigma[1] \smallsetminus \{\langle \be_{b(\intercal)} \rangle\}} \right\}.
\end{equation}
If $\ChMingHao{\langle} \be_{b(\intercal)} \ChMingHao{\rangle} \notin \sigma[1]$, then \eqref{EQ:new-cone-II} becomes $\bv_\intercal \in \bigcap\bigl\{\tau_\rho \colon \rho \in \sigma[1]\bigr\}$, which contradicts Lemma~\ref{L:new-cone}. This proves (i).

For (ii), \eqref{EQ:new-cone-II} already shows that $\sigma^\circ$ can be inscribed in the cone in $\Sigma(f)$ dual to the vertex $\bv_\intercal$ of $\Upgamma(f)$. It remains to show $\sigma^\circ$ is a cone in $\Sigma^\dag$. More precisely, we show $\sigma^\circ$ is dual to the face \begin{equation}\label{EQ:varsigma-circ}
    \varsigma^\circ \; := \; \bigcap\bigl\{\tau_\rho^\dag \colon \rho \in \sigma^\circ[1]\bigr\} \; \prec \; \Upgamma^\dag.
\end{equation}
By Corollary~\ref{C:cone-face-correspondence-A}, this amounts to showing that $\bigl\{\tau_\rho^\dag \colon \rho \in \sigma^\circ[1]\bigr\}$ are the only facets $\tau^\dag \prec^1 \Upgamma^\dag$ containing $\varsigma^\circ$. Indeed, any facet $\tau^\dag \prec^1 \Upgamma^\dag$ containing $\varsigma^\circ$ must also contain the face $\varsigma \prec \Upgamma^\dag$ dual to $\sigma \in \Sigma^\dag$, and hence, must be dual to \ChMingHao{an} extremal ray $\rho$ in $\sigma[1]$. It remains to observe that that $\rho$ cannot be $\ChMingHao{\langle} \be_{b(\intercal)} \ChMingHao{\rangle}$, since $\bv_\intercal \in \varsigma^\circ$ \eqref{EQ:new-cone-II} but $\bv_\intercal \notin H_{b(\intercal)} \cap \Upgamma(f)$. 

Finally, we prove (iii). By Lemma~\ref{L:new-cone}, we obtain: \begin{align*}
    \varnothing \; = \; \bigcap\bigl\{\tau_\rho \colon \rho \in \sigma[1] \bigr\} \; &= \; \bigl(H_{b(\intercal)} \cap \Upgamma(f)\bigr) \cap \; \bigcap\bigl\{\tau_\rho \colon \rho \in \sigma^\circ[1]\bigr\} \\
    &= \; H_{b(\intercal)} \cap \underline{\varsigma}^\circ.
\end{align*}
In particular, for every $\tau \in \intercal$, $\underline{\varsigma}^\circ \cap \tau$ is a face of $\tau$ that does not intersect the facet $H_{b(\intercal)} \cap \tau \prec^1 \tau$. By \eqref{EQ:new-cone-II}, $\underline{\varsigma}^\circ \cap \tau$ also contains $\bv_\intercal$. Since the only compact face of $\tau$ satisfying those two conditions is $\{\bv_\intercal\}$ (cf. \ref{X:B_1-facets}(i)), this proves (iii).
\end{proof}

As an immediate consequence of the preceding proposition, we have:

\begin{corollary}\label{C:new-cone-II}
Every $\ba \in \underline{\varsigma}^\circ$ has $b(\intercal)$\textsuperscript{th} coordinate $\geq 1$.
\end{corollary}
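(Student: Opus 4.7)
The plan is to combine Proposition~\ref{P:new-cone-II}(iii), which asserts $\underline{\varsigma}^\circ \cap H_{b(\intercal)} = \varnothing$, with the fact that $\Upgamma_+(f)$ is a Newton polyhedron whose vertices lie in $\NN^n = M^+$ (and not merely in $M_\QQ^+$; cf.\ Remark~\ref{R:newton-Q-polyhedra}). Since $\underline{\varsigma}^\circ$ is a face of $\Upgamma_+(f)$, every vertex of $\underline{\varsigma}^\circ$ is in particular a vertex of $\Upgamma_+(f)$, hence lies in $\NN^n$.

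Given $\ba \in \underline{\varsigma}^\circ$, I would decompose $\ba$ as a convex combination of vertices of $\underline{\varsigma}^\circ$ plus an element of the recession cone of $\underline{\varsigma}^\circ$, say \[
    \ba \; = \; \sum_{\bw \in \vertex(\underline{\varsigma}^\circ)}{\lambda_\bw \cdot \bw} \; + \; \bv'
\]
with $\lambda_\bw \in \RR_{\geq 0}$, $\sum_\bw \lambda_\bw = 1$, and $\bv'$ in the recession cone of $\underline{\varsigma}^\circ$. For each such vertex $\bw$, Proposition~\ref{P:new-cone-II}(iii) forces $\bw \notin H_{b(\intercal)}$, so its $b(\intercal)$\textsuperscript{th} coordinate is a strictly positive integer and is therefore $\geq 1$. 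The recession cone of $\underline{\varsigma}^\circ$ is a face of the recession cone $M_\RR^+$ of $\Upgamma_+(f)$, so the $b(\intercal)$\textsuperscript{th} coordinate of $\bv'$ is $\geq 0$. Reading off the $b(\intercal)$\textsuperscript{th} coordinate of the displayed decomposition then gives the desired bound $\geq \sum_\bw \lambda_\bw = 1$.

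I do not anticipate any substantive obstacle: the corollary essentially amounts to upgrading the strict inequality ``$b(\intercal)$\textsuperscript{th} coordinate $> 0$'' (which is exactly $\underline{\varsigma}^\circ \cap H_{b(\intercal)} = \varnothing$, from Proposition~\ref{P:new-cone-II}(iii)) to ``$\geq 1$'', and this upgrade is supplied for free by the integrality of the vertices of $\Upgamma_+(f)$ together with the containment of the recession cone of $\underline{\varsigma}^\circ$ in $M_\RR^+$.
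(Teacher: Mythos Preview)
Your proposal is correct and follows essentially the same approach as the paper's proof: both use Proposition~\ref{P:new-cone-II}(iii) to conclude that vertices of $\underline{\varsigma}^\circ$ avoid $H_{b(\intercal)}$, and then invoke the integrality of vertices of $\Upgamma_+(f)$ to upgrade $>0$ to $\geq 1$. Your version is in fact slightly more explicit than the paper's, since you spell out the convex-combination-plus-recession-cone decomposition needed to pass from vertices to arbitrary points of $\underline{\varsigma}^\circ$, whereas the paper leaves this step implicit.
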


\begin{proof}
Since $\underline{\varsigma}^\circ \cap H_{b(\intercal)} = \varnothing$, all vertices of $\underline{\varsigma}^\circ$ have $b(\intercal)$\textsuperscript{th} coordinate $>0$. On the other hand, since $\Upgamma(f)$ is a Newton polyhedron, all vertices of $\underline{\varsigma}^\circ$ have integer coordinates, and hence, must have $b(\intercal)$\textsuperscript{th} coordinate $\geq 1$.
\end{proof}

For later purposes, the preceding corollary is however not sufficient. We instead need the following refinement:

\begin{proposition}\label{P:new-cone-dual-to-compact}
If the face $\varsigma \prec \Upgamma^\dag$ dual to $\sigma$ is compact, then every $\ba \in \underline{\varsigma}^\circ \smallsetminus \{\bv_\intercal\}$ has $b(\intercal)$\textsuperscript{th} coordinate $> 1$.
\end{proposition}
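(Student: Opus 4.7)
I will argue by contradiction: suppose there exists $\bw \in \underline{\varsigma}^\circ \setminus \{\bv_\intercal\}$ with $\bw \cdot \be_i = 1$, where $i := b(\intercal)$, and set $\bd := \bw - \bv_\intercal \ne \bzero$. Since both $\bw$ and $\bv_\intercal$ lie in $\underline{\varsigma}^\circ = \bigcap_{\rho \in \sigma^\circ[1]}\tau_\rho$, one has $\bd \cdot \bu_\rho = 0$ for every $\rho \in \sigma^\circ[1]$; the assumption $\bw \cdot \be_i = 1 = \bv_\intercal \cdot \be_i$ additionally gives $\bd \cdot \be_i = 0$. Because $\sigma = \sigma^\circ + \RR_{\geq 0}\be_i$ by Proposition~\ref{P:new-cone-II}(i), $\bd$ is orthogonal to every ray of $\sigma$, i.e.\ $\bd \in \sigma^\perp$.

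The next step is to convert the compactness of $\varsigma$ into the combinatorial statement that $\sigma^\circ \not\subset \{\be_j^\vee = 0\}$ for every $j \ne i$. Since $H_i \cap \Upgamma_+^\dag$ is the facet of $\Upgamma_+^\dag$ dual to the ray $\RR_{\geq 0}\be_i \in \Sigma^\dag[1]$, and since $\sigma = \sigma^\circ + \RR_{\geq 0}\be_i$, the face $\varsigma$ coincides with $\varsigma^\circ \cap H_i$. For each $j \ne i$ one has $\be_j^\vee \cdot \be_i = 0$, so any non-compactness of $\varsigma^\circ$ in the $\be_j^\vee$-direction would transfer verbatim to $\varsigma$; therefore, by the $\Upgamma_+^\dag$-analogue of Corollary~\ref{C:cone-face-correspondence-B}, compactness of $\varsigma$ forces the claimed non-containment. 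Combining this with Proposition~\ref{P:new-cone-II}(iii), I will further deduce that $\underline{\varsigma}^\circ \cap \tau = \{\bv_\intercal\}$ for every $\tau \in \intercal$: if instead $\underline{\varsigma}^\circ \cap \tau$ were a non-compact face of $\tau$, it would extend in some $\be_j^\vee$-direction with $j \in J(\tau) \subset [n]\setminus\{i\}$, forcing (now by Corollary~\ref{C:cone-face-correspondence-B} applied to $\Upgamma_+(f)$) $\sigma^\circ \subset \{\be_j^\vee = 0\}$. In particular $\bw$ lies on no $\tau \in \intercal$, so $\bd \cdot \bu_\tau > 0$ for every $\tau \in \intercal$.

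To close the argument, I need to rule out the existence of $\bd \in \sigma^\perp$ satisfying $\bd \cdot \bu_\tau > 0$ for every $\tau \in \intercal$. By Gordan's theorem, this is equivalent to exhibiting nonnegative coefficients $(\lambda_\tau)_{\tau \in \intercal}$, not all zero, with $\sum_{\tau \in \intercal}\lambda_\tau \bu_\tau \in \mathrm{span}(\sigma)$. When $\sigma$ is a maximal cone of $\Sigma^\dag$, this is immediate because $\mathrm{span}(\sigma) = N_\RR$; in that case even $\lambda_\tau = 1$ for a single $\tau$ gives the certificate. The non-maximal case is where I expect the main difficulty: here I plan to construct the $\lambda_\tau$ by exploiting the $B_1$-structure of each $\tau \in \intercal$ (apex $\bv_\intercal$ of $i$-th coordinate $1$, other vertices in $H_i$) together with the adjacency of $\rho_\tau$ to rays of $\sigma$ recorded in the first statement of Proposition~\ref{P:new-cone-II}, which constrains each $\bu_\tau$ to lie in a specific affine relation with the vectors $\{\bu_\rho : \rho \in \sigma^\circ[1]\} \cup \{\be_i\}$. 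A cleaner alternative would be to prove separately that every new cone in $\Sigma^\dag$ with compact dual is automatically maximal—heuristically, dropping a ray from a new maximal cone typically enlarges its dual face in a coordinate direction $\be_j^\vee$ with $j \ne i$ that cannot be reconciled with compactness of $\varsigma$—thereby avoiding the explicit Gordan-type construction altogether.
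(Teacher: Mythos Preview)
Your reduction up through the Gordan step is clean and correct: the orthogonality $\bd \in \sigma^\perp$, the identity $\varsigma = \varsigma^\circ \cap H_{b(\intercal)}$, the deduction that $\sigma^\circ \not\subset \{\be_j^\vee = 0\}$ for $j \neq b(\intercal)$, and the conclusion $\underline{\varsigma}^\circ \cap \tau = \{\bv_\intercal\}$ for every $\tau \in \intercal$ are all fine. But the argument stalls precisely where you say it does, and neither of your proposed exits works.

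Your ``cleaner alternative''---that every new cone in $\Sigma^\dag$ with compact dual is maximal---is \emph{false}. The paper records this explicitly: when $\underline{\varsigma}^\circ = \{\bv_\intercal\}$, the dimensions of $\varsigma^\circ$ and $\varsigma$ are unconstrained, so $\sigma$ can be far from maximal. Now, in your contradiction setup you do assume $\underline{\varsigma}^\circ \supsetneq \{\bv_\intercal\}$, and in \emph{that} case $\sigma$ is indeed maximal---but proving this is exactly the heart of the matter, not a side lemma. Your plan (a) is also not a plan: the adjacency statement in Proposition~\ref{P:new-cone-II} says each ray of $\sigma$ is adjacent in $\Sigma(f)$ to some $\rho_\tau$ with $\tau \in \intercal$, which is a \emph{combinatorial} incidence in the fan, not a linear relation placing $\bu_\tau$ inside $\mathrm{span}(\sigma)$. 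There is no evident mechanism to produce the certificate $\sum_\tau \lambda_\tau \bu_\tau \in \mathrm{span}(\sigma)$ when $\sigma$ is non-maximal.

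The paper's route is genuinely different and avoids the Gordan detour. It introduces the intermediate polyhedron $\Upgamma_+^\ddag := \Upgamma_+^{\dag,\dB \smallsetminus \intercal}$ and observes two things: first, $\varsigma \cap \Upgamma_+^\ddag = \varnothing$ while $\underline{\varsigma}^\circ \subset \varsigma^\circ \cap \Upgamma_+^\ddag$; second, any segment joining a point of $\varsigma^\circ \smallsetminus \Upgamma_+^\ddag$ to a point of $\varsigma^\circ \cap \Upgamma_+^\ddag$ must cross $\bigcup_{\tau \in \intercal}\tau$, hence (by the $\{\bv_\intercal\}$ fact you already proved) must pass through the single point $\bv_\intercal$. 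A short convexity argument then forces $\dim(\varsigma^\circ) \leq 1$, so $\underline{\varsigma}^\circ$ is either $\{\bv_\intercal\}$ or a one-dimensional face whose affine line meets $H_{b(\intercal)}$ in a single point. Combined with Corollary~\ref{C:new-cone-II}, that makes $\bv_\intercal$ the unique point of $\underline{\varsigma}^\circ$ with $b(\intercal)$\textsuperscript{th} coordinate equal to $1$. This is the missing geometric idea; your Gordan framework does not supply it.
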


\begin{xpar}\label{X:new-cone-dual-to-compact}
We prove the preceding proposition after a few observations and results. For the remainder of this section, let $\varsigma^\circ$ denote the face of $\Upgamma^\dag$ dual to $\sigma^\circ$, cf. \eqref{EQ:varsigma-circ}. By Corollary~\ref{C:cone-face-correspondence-A}, we have: \begin{align}\label{EQ:new-cone-dual-to-compact-1}
    \begin{split}
    \varsigma \; = \; \bigcap\bigl\{\tau_\rho^\dagger \colon \rho \in \sigma[1]\bigr\} \; &= \; \bigl(H_{b(\intercal)} \cap \Upgamma^\dag\bigr) \cap \; \bigcap\bigl\{\tau_\rho^\dagger \colon \rho \in \sigma^\circ[1]\bigr\} \\
    &= \; H_{b(\intercal)} \cap \varsigma^\circ
    \end{split}
\end{align}
and \begin{align}\label{EQ:new-cone-dual-to-compact-2}
    \begin{split}
    \underline{\varsigma}^\circ \; = \; \bigcap\bigl\{\tau_\rho \colon \rho \in \sigma^\circ[1]\bigr\} \; &= \; \bigcap\bigl\{\tau_\rho^\dag \cap \Upgamma(f) \colon \rho \in \sigma^\circ[1]\bigr\} \\
    &= \; \varsigma^\circ \cap \Upgamma(f).
    \end{split}
\end{align}
From these equalities we deduce the next lemma. In particular, note that part (ii) of the next lemma refines Proposition~\ref{P:new-cone-II}(iii).
\end{xpar}

\begin{lemma}\label{L:new-cone-dual-to-compact}
If $\varsigma$ is compact, then: \begin{enumerate}
    \item Both $\varsigma^\circ$ and $\underline{\varsigma}^\circ$ are either non-compact in the $b(\intercal)$\textsuperscript{th} coordinate, or compact.
    \item For any $\tau \in \intercal$, we have $\varsigma^\circ \cap \tau = \underline{\varsigma}^\circ \cap \tau = \{\bv_\intercal\}$.
\end{enumerate}
\end{lemma}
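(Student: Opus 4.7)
The plan is to establish (i) via the orbit--cone dictionary of Corollary~\ref{C:cone-face-correspondence-B} together with the structural input from Proposition~\ref{P:new-cone-II}(i), and then to deduce (ii) by combining (i) with the dichotomy of Proposition~\ref{P:new-cone-II}(iii) and the defining compactness property of $B_1$-facets (Definition~\ref{D:B1-facets}(c)).

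For (i), the crucial observation I would exploit is that Proposition~\ref{P:new-cone-II}(i) guarantees $\RR_{\geq 0}\be_{b(\intercal)} \in \sigma[1]$, so that $\sigma = \sigma^\circ + \RR_{\geq 0}\be_{b(\intercal)}$. Since $\be_{b(\intercal)}$ lies in every coordinate hyperplane $\{\be_i^\vee = 0\} \subset N_\RR$ for $i \neq b(\intercal)$, one obtains for such $i$ the equivalence
\[
\sigma \subset \{\be_i^\vee = 0\} \;\;\Longleftrightarrow\;\; \sigma^\circ \subset \{\be_i^\vee = 0\}.
\]
Applying Corollary~\ref{C:cone-face-correspondence-B} to $\Upgamma_+^\dag$, the assumed compactness of $\varsigma$ rules out $\sigma \subset \{\be_i^\vee = 0\}$ for every $i$, and by the displayed equivalence this forces $\sigma^\circ \not\subset \{\be_i^\vee = 0\}$ for each $i \neq b(\intercal)$. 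Running Corollary~\ref{C:cone-face-correspondence-B} in the reverse direction for $\varsigma^\circ$ shows that $\varsigma^\circ$ is either compact or non-compact only in the $b(\intercal)$-th coordinate. The corresponding statement for $\underline{\varsigma}^\circ = \varsigma^\circ \cap \Upgamma_+(f)$ will then follow from the fact that $\Upgamma_+(f)$ is stable under $+\RR_{\geq 0}\be_i^\vee$ for every $i$, so the recession behaviour of $\underline{\varsigma}^\circ$ in any coordinate direction matches that of $\varsigma^\circ$.

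For (ii), I would first reduce the two claimed equalities to one: since $\tau \subset \Upgamma_+(f)$, equation \eqref{EQ:new-cone-dual-to-compact-2} gives $\varsigma^\circ \cap \tau = \varsigma^\circ \cap \Upgamma_+(f) \cap \tau = \underline{\varsigma}^\circ \cap \tau$, so it suffices to show $\underline{\varsigma}^\circ \cap \tau = \{\bv_\intercal\}$. Proposition~\ref{P:new-cone-II}(iii) already gives the dichotomy that this intersection is either $\{\bv_\intercal\}$ or a non-compact face of $\tau$ containing $\bv_\intercal$, so I would argue by contradiction and rule out the second case: part (i) forces any non-compactness of $\underline{\varsigma}^\circ \cap \tau \subset \underline{\varsigma}^\circ$ to occur in the $b(\intercal)$-th coordinate, but $\tau$ is by assumption a $B_1$-facet of $\Upgamma_+(f)$ with base direction $b(\intercal)$, hence compact in that coordinate by Definition~\ref{D:B1-facets}(c), and the same then holds for the subface $\underline{\varsigma}^\circ \cap \tau$—a contradiction.

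I do not anticipate a genuine obstacle: the statement is essentially bookkeeping once Proposition~\ref{P:new-cone-II}(i) and (iii) are in hand, and everything reduces cleanly to the cone-hyperplane/face-coordinate dictionary of Corollary~\ref{C:cone-face-correspondence-B}. The only step that requires care is keeping straight the two meanings of ``non-compact in the $i$-th coordinate''—the geometric one and its cone-theoretic counterpart—and correctly transporting this through the intersection $\varsigma^\circ \cap \Upgamma_+(f)$; once this is handled, the rest is essentially mechanical.
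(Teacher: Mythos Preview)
Your proposal is correct and follows essentially the same logic as the paper. The only cosmetic difference is in part (i): the paper argues directly on the face side using $\varsigma = H_{b(\intercal)} \cap \varsigma^\circ$ from \eqref{EQ:new-cone-dual-to-compact-1} (noting that $H_{b(\intercal)}$ is non-compact in every coordinate $i \neq b(\intercal)$), whereas you pass to the cone side via Corollary~\ref{C:cone-face-correspondence-B} and the decomposition $\sigma = \sigma^\circ + \RR_{\geq 0}\be_{b(\intercal)}$; these are dual formulations of the same observation, and your part (ii) matches the paper's argument exactly.
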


\begin{proof}
(i) follows from \eqref{EQ:new-cone-dual-to-compact-1} and \eqref{EQ:new-cone-dual-to-compact-2}, since $H_{b(\intercal)}$ is non-compact in the $i$\textsuperscript{th} coordinate for $i \in [n] \smallsetminus \{b(\intercal)\}$. For (ii), we note, from (i) and the fact that any $\tau \in \intercal$ cannot be non-compact in the $b(\intercal)$\textsuperscript{th} coordinate (Definition~\ref{D:B1-facets}(ii)), that $\underline{\varsigma}^\circ \cap \tau$ is a compact face of $\tau$, and hence is $\{\bv_\intercal\}$ by Proposition~\ref{P:new-cone-II}(iii). Note finally that $\varsigma^\circ \cap \tau = \underline{\varsigma}^\circ \cap \tau$ by \eqref{EQ:new-cone-dual-to-compact-2}.
\end{proof}

\begin{proposition}\label{P:new-cone-dual-to-compact-II}
If $\varsigma$ is compact, $\underline{\varsigma}^\circ$ is $\{\bv_\intercal\}$ or $1$-dimensional. In the latter case, the affine span of $\underline{\varsigma}^\circ$ contains $\bv_\intercal$, and intersects $H_{b(\intercal)}$ at a point.
\end{proposition}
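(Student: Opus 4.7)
My plan is to translate the dimensional claim, via the correspondence in \ref{X:cone-face-correspondence}, into a statement about faces of the $n$-dimensional maximal cone $\sigma_{\bv_\intercal} \in \Sigma(f)$ dual to $\bv_\intercal$. Writing $\sigma_{\underline{\varsigma}^\circ} \prec \sigma_{\bv_\intercal}$ for the cone in $\Sigma(f)$ dual to $\underline{\varsigma}^\circ$, the identity $\dim \underline{\varsigma}^\circ + \dim \sigma_{\underline{\varsigma}^\circ} = n$ makes $\dim \underline{\varsigma}^\circ \leq 1$ equivalent to $\sigma_{\underline{\varsigma}^\circ}$ having codimension at most $1$ in $\sigma_{\bv_\intercal}$; equivalently, at most one facet of $\sigma_{\bv_\intercal}$ contains $\sigma_{\underline{\varsigma}^\circ}$. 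The case $\underline{\varsigma}^\circ = \{\bv_\intercal\}$ corresponds to $\sigma_{\underline{\varsigma}^\circ} = \sigma_{\bv_\intercal}$ and is the trivial case explicitly allowed.

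For the main case $\underline{\varsigma}^\circ \neq \{\bv_\intercal\}$, Lemma~\ref{L:new-cone-dual-to-compact}(ii) gives $\rho_\tau \notin \sigma_{\underline{\varsigma}^\circ}[1]$ for every $\tau \in \intercal$, since otherwise $\underline{\varsigma}^\circ \subset \tau$ would force $\underline{\varsigma}^\circ = \underline{\varsigma}^\circ \cap \tau = \{\bv_\intercal\}$. By Corollary~\ref{C:cone-face-correspondence-A}, facets of $\sigma_{\bv_\intercal}$ containing $\sigma_{\underline{\varsigma}^\circ}$ are in inclusion-reversing bijection with edges of $\Upgamma_+(f)$ through $\bv_\intercal$ that lie inside $\underline{\varsigma}^\circ$. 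The direction $\bd$ of any such edge has $d_{b(\intercal)} > 0$: in the compact case, $\bd = \bw - \bv_\intercal$ for an integer vertex $\bw$ of $\Upgamma_+(f)$ with $w_{b(\intercal)} \geq 2$ by Proposition~\ref{P:new-cone-dual-to-compact}; in the non-compact case, $\bd = \be_{b(\intercal)}^\vee$ by Lemma~\ref{L:new-cone-dual-to-compact}(i) applied to the recession cone of $\underline{\varsigma}^\circ$.

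The heart of the argument, which I expect is the main obstacle, is to upgrade this to uniqueness, namely to show at most one facet of $\sigma_{\bv_\intercal}$ contains $\sigma_{\underline{\varsigma}^\circ}$. Since $\sigma^\circ[1] \subset \sigma_{\underline{\varsigma}^\circ}[1]$, it suffices to show at most one facet of $\sigma_{\bv_\intercal}$ contains $\sigma^\circ$. For this I plan to combine Proposition~\ref{P:new-cone-II}, by which every $\rho \in \sigma^\circ[1]$ is adjacent in $\Sigma(f)$ to some $\rho_\tau$ with $\tau \in \intercal$, with the consistent base direction hypothesis on $\intercal$, which forces $\{\rho_\tau : \tau \in \intercal\}$ to form a connected subset of $\sigma_{\bv_\intercal}[1]$ under the ray-adjacency in $\sigma_{\bv_\intercal}$. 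Together these should pin $\sigma^\circ[1]$ to a single connected region of $\sigma_{\bv_\intercal}[1] \smallsetminus \{\rho_\tau : \tau \in \intercal\}$ whose rays jointly span at most one facet of $\sigma_{\bv_\intercal}$. Once $\dim \underline{\varsigma}^\circ = 1$ is established, its affine span is a line through $\bv_\intercal$ in a direction with strictly positive $b(\intercal)$-th component, hence not parallel to $H_{b(\intercal)}$, and so meets $H_{b(\intercal)}$ at exactly one point.
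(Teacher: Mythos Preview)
Your plan has two genuine problems.

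\textbf{Circularity.} You invoke Proposition~\ref{P:new-cone-dual-to-compact} to deduce that any integer vertex $\bw \in \underline{\varsigma}^\circ \smallsetminus \{\bv_\intercal\}$ has $w_{b(\intercal)} \geq 2$. But in the paper, Proposition~\ref{P:new-cone-dual-to-compact} is proved \emph{as a corollary of} Proposition~\ref{P:new-cone-dual-to-compact-II}; see the short proof immediately following the latter. Without it, Corollary~\ref{C:new-cone-II} only gives $w_{b(\intercal)} \geq 1$, so $d_{b(\intercal)} \geq 0$, which is not enough to conclude that the affine span of a compact edge $[\bv_\intercal,\bw]$ meets $H_{b(\intercal)}$.

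\textbf{The uniqueness step does not go through.} Your core reduction --- that at most one facet of $\sigma_{\bv_\intercal}$ contains $\sigma^\circ[1]$ --- is left to a heuristic about ``connected regions'' of $\sigma_{\bv_\intercal}[1] \smallsetminus \Sigma[\intercal]$, and this heuristic does not carry the weight you need. The fact that every $\rho \in \sigma^\circ[1]$ is adjacent in $\Sigma(f)$ to some $\rho_\tau$ with $\tau \in \intercal$, together with connectivity of $\Sigma[\intercal]$ (which, incidentally, is the \emph{definition} of $\intercal$ as a $\sim$-class, not a consequence of the consistent base direction hypothesis), places no evident constraint on how many facets of $\sigma_{\bv_\intercal}$ can simultaneously contain all of $\sigma^\circ[1]$. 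Dually, you are asserting that $\underline{\varsigma}^\circ$ contains at most one edge of $\Upgamma_+(f)$ through $\bv_\intercal$; nothing in your adjacency argument rules out $\underline{\varsigma}^\circ$ being, say, a $2$-face of $\Upgamma_+(f)$ through $\bv_\intercal$. Your approach never engages with $\varsigma^\circ \prec \Upgamma_+^\dag$ or with the hypothesis that $\varsigma = H_{b(\intercal)} \cap \varsigma^\circ$ is nonempty and compact, and that is where the dimensional constraint ultimately comes from.

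The paper's proof takes a completely different route on the polytope side. It introduces the auxiliary $(\dB \smallsetminus \intercal)$-cut $\Upgamma_+^\ddag$, shows that $\varsigma^\circ$ has points both inside and outside $\Upgamma_+^\ddag$, and then uses Lemma~\ref{L:new-cone-dual-to-compact}(ii) to deduce that any line segment in $\varsigma^\circ$ crossing the boundary of $\Upgamma_+^\ddag$ must pass through the single point $\bv_\intercal$. This bottleneck forces $\dim(\varsigma^\circ) \leq 1$ (and hence $\dim(\underline{\varsigma}^\circ) \leq 1$) directly. The intersection with $H_{b(\intercal)}$ then comes for free from $\varsigma = H_{b(\intercal)} \cap \varsigma^\circ \neq \varnothing$, avoiding any appeal to Proposition~\ref{P:new-cone-dual-to-compact}.
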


\begin{proof}
By Lemma~\ref{L:new-cone-dual-to-compact}(ii), we have: \begin{equation}\label{EQ:new-cone-dual-to-compact-3}
    \varsigma^\circ \cap \; \bigcup\bigl\{\tau \colon \tau \in \intercal\bigr\} \; = \; \{\bv_\intercal\}.
\end{equation}
To exploit the above equation, we consider the $(\dB \smallsetminus \intercal)$-cut of $\Upgamma(f)$, i.e. \begin{equation}\label{EQ:new-cone-dual-to-compact-4}
    \Upgamma^\ddag \; = \; \Upgamma^\dag \cap \; \bigcap_{\tau \in \intercal}{H_{\bu_\tau,N_\tau}^+} \; \subset \; \Upgamma^\dag
\end{equation}
and let $\Sigma^\ddag$ be its normal fan in $N_\RR$. For $\rho \in \Sigma^\ddag[1] = \Sigma(f)[1] \smallsetminus \ChMingHao{\Sigma(f)[1]|_{\dB \smallsetminus \intercal}}$, we let $\tau_\rho^\ddag$ denote the facet of $\Upgamma^\ddag$ dual to $\rho$. We make some crucial observations: \begin{enumerate}
    \item[(a)] Firstly, by Lemma~\ref{L:facets-dagger}(ii), each $\tau \in \intercal$ is still a facet of $\Upgamma^\ddag$. That is, for $\rho \in \Sigma[\intercal]$, $\tau_\rho^\ddag = \tau_\rho$.
    
    \item[(b)] Secondly, \ChMingHao{recall from \ref{X:convention-4.2-A} that $\dB_{/\sim} = \{\intercal_1,\intercal_2,\dotsc,\intercal_k\}$ with $\intercal = \intercal_\ell$ for some $\ell \in [k]$}. By replacing $\dB$ by $\dB \smallsetminus \bigcup\{\intercal_j \colon j > \ell\}$, we may assume that $\sigma$ cannot be inscribed in any cone in $\Sigma^\ddag$, cf. Proposition~\ref{P:new-cone-detailed}. Then: \begin{align*}
        \qquad \qquad \varnothing \; &= \; \bigcap\bigl\{\tau_\rho^\ddag \colon \rho \in \sigma[1]\bigr\} \quad \;\; \textrm{\small{by Lemma~\ref{L:new-cone}}} \\
        &= \; \bigl(H_{b(\intercal)} \cap \Upgamma^\ddag\bigr) \cap \; \bigcap\bigl\{\tau_\rho^\ddag \colon \rho \in \sigma^\circ[1]\bigr\} \\
        &= \; \bigl(H_{b(\intercal)} \cap \Upgamma^\ddag\bigr) \cap \; \bigcap\bigl\{\tau_\rho^\dag \cap \Upgamma^\ddag \colon \rho \in \sigma^\circ[1]\bigr\} \quad \textrm{\small{by Lemma~\ref{L:facets-dagger}(i)}} \\
        &= \; \Upgamma^\ddag \cap \bigl(H_{b(\intercal)} \cap \Upgamma^\dag\bigr) \cap \; \bigcap\bigl\{\tau_\rho^\dag \colon \rho \in \sigma^\circ[1]\bigr\} \\
        &= \; \Upgamma^\ddag \cap \bigl(H_{b(\intercal)} \cap \Upgamma^\dag\bigr) \cap \varsigma^\circ \; \stackrel{\eqref{EQ:new-cone-dual-to-compact-1}}{\longeq} \; \varsigma \cap \Upgamma^\ddag
    \end{align*}
    i.e. $\varsigma \subset \varsigma^\circ \smallsetminus \Upgamma^\ddag$. In particular, $\varsigma^\circ \smallsetminus \Upgamma^\ddag \neq \varnothing$. We also note that \[
        \qquad \qquad \underline{\varsigma}^\circ \; \stackrel{\eqref{EQ:new-cone-dual-to-compact-2}}{\longeq} \; \varsigma^\circ \cap \Upgamma(f) \; \subset \; \varsigma^\circ \cap \Upgamma^\ddag
    \]
    i.e. in particular, $\varsigma^\circ \cap \Upgamma^\ddag$ is a (non-empty) face of $\Upgamma^\ddag$.
    
    \item[(c)] Thirdly, by \eqref{EQ:new-cone-dual-to-compact-4}, any line segment connecting a point in $\Upgamma^\dag \smallsetminus \Upgamma^\ddag$ to a point in $\Upgamma^\ddag$ must pass through a point in \[
        \qquad \qquad \bigcup\bigl\{\Upgamma^\ddag \cap H_{\bu_\tau,N_\tau} \colon \tau \in \intercal\bigr\} \; \stackrel{\textup{(a)}}{\longeq} \; \bigcup\bigl\{\tau \colon \tau \in \intercal\bigr\}.
    \]
    By \eqref{EQ:new-cone-dual-to-compact-3}, we therefore deduce that any line segment connecting a point in $\varsigma^\circ \smallsetminus \Upgamma^\ddag$ to a point in $\varsigma^\circ \cap \Upgamma^\ddag \prec \Upgamma^\ddag$ must pass through $\bv_\intercal$. 
\end{enumerate}
We can now conclude the proof by considering two cases. \begin{enumerate}
    \item[\underline{Case 1}:] Suppose that $\bv_\intercal$ is always one of the two vertices of \emph{every} line segment connecting a point in $\varsigma^\circ \smallsetminus \Upgamma^\ddag$ to a point in $\varsigma^\circ \cap \Upgamma^\ddag$. Then we claim $\varsigma^\circ \cap \Upgamma^\ddag = \{\bv_\intercal\}$. If not, choose a point $\ba_1 \in (\varsigma^\circ \cap \Upgamma^\ddag) \smallsetminus \{\bv_\intercal\}$. By (b), we may also choose a point $\ba_2$ in $\varsigma^\circ \smallsetminus \Upgamma^\ddag$. By (c), the line segment connecting $\ba_1$ to $\ba_2$ must contain $\bv_\intercal$ in its relative interior, contradicting the hypothesis of this case. From our claim we obtain: \[
        \qquad \qquad \{\bv_\intercal\} \; = \; \varsigma^\circ \cap \Upgamma^\ddag \; \supset \; \varsigma^\circ \cap \Upgamma(f) \; \stackrel{\eqref{EQ:new-cone-dual-to-compact-2}}{\longeq} \; \underline{\varsigma}^\circ \; \supset \; \{\bv_\intercal\}
    \]
    which forces $\underline{\varsigma}^\circ = \{\bv_\intercal\}$.
    
    \item[\underline{Case 2}:] Suppose \emph{there exists} a line segment $\fl$ connecting some $\ba_1 \in \varsigma^\circ \smallsetminus \Upgamma^\ddag$ to some $\ba_2 \in \varsigma^\circ \cap \Upgamma^\ddag$ that contains $\bv_\intercal$ in its relative interior. In particular, note $\ba_1 \neq \bv_\intercal \neq \ba_2$, so that $\dim(\varsigma^\circ) \geq \dim(\varsigma^\circ \cap \Upgamma^\ddag) \geq 1$. We claim that in fact \[
        \qquad \qquad \dim(\varsigma^\circ) \; = \; \dim(\varsigma^\circ \cap \Upgamma^\ddag) \; = \; 1.
    \]
    Indeed, given any $\ba_1' \in \varsigma^\circ \smallsetminus \Upgamma^\ddag$, (c) implies that the line segment connecting $\ba_1'$ to $\ba_2$ must contain $\bv_\intercal$, and thus $\ba_1'$ must lie on the affine span of $\fl$. Likewise, given any $\ba_2' \in \varsigma^\circ \cap \Upgamma^\ddag$, the line segment connecting $\ba_1$ to $\ba_2'$ must contain $\bv_\intercal$, and thus $\ba_2'$ must lie on the affine span of $\fl$. 
    
    Finally, by \eqref{EQ:new-cone-dual-to-compact-2}, $\underline{\varsigma}^\circ = \varsigma^\circ \cap \Upgamma(f) \subset \varsigma^\circ \cap \Upgamma^\ddag$, so $\dim(\underline{\varsigma}^\circ) \leq \dim(\varsigma^\circ \cap \Upgamma^\ddag) = 1$. Since $\underline{\varsigma}^\circ$ always contains $\bv_\intercal$, we conclude that $\underline{\varsigma}^\circ$ is either $\{\bv_\intercal\}$ or $1$-dimensional.
\end{enumerate}
Together these two cases prove the first statement of the proposition. For the second statement, first note that $\dim(\underline{\varsigma}^\circ) = 1$ only occurs in \underline{Case 2}. In that case, we also have $\dim(\varsigma^\circ) = 1$ and $\varsigma^\circ \cap \Upgamma(f) = \underline{\varsigma}^\circ$, so the affine span of $\underline{\varsigma}^\circ$ must be equal to the affine span of $\varsigma^\circ$. By \eqref{EQ:new-cone-dual-to-compact-1}, $\varsigma^\circ$ has non-empty intersection with $H_{b(\intercal)}$ (namely, the face $\varsigma \prec \Upgamma^\dag$). That intersection must be a point since $\bv_\intercal \in \underline{\varsigma}^\circ \subset \varsigma^\circ$ has $b(\intercal)$\textsuperscript{th} coordinate $1$.
\end{proof}

\begin{remark}
From the proof above, one may supplement Proposition~\ref{P:new-cone-dual-to-compact-II} as follows. If $\dim(\underline{\varsigma}^\circ) = 1$, then $\dim(\varsigma^\circ) = 1$ and $\dim(\varsigma) = 0$, i.e. $\sigma \in \Sigma^\dag[\max]$. Note however that if $\underline{\varsigma}^\circ = \{\bv_\intercal\}$, $\dim(\varsigma^\circ)$ and $\dim(\varsigma)$ are arbitrary.
\end{remark}

\begin{proof}[Proof of Proposition~\ref{P:new-cone-dual-to-compact}]
We saw that $\underline{\varsigma}^\circ$ is either $\{\bv_\intercal\}$ or $1$-dimensional. There is nothing to show in the former case. In the latter case, we saw that $\bv_\intercal$ is the \emph{only} point in $\underline{\varsigma}^\circ$ with $b(\intercal)$\textsuperscript{th} coordinate $1$. Combining this with Corollary~\ref{C:new-cone-II} finishes the proof.
\end{proof}

\subsection{A \ChMingHao{refined} desingularization of all non-degenerate polynomials above the origin}\label{4.3}

In this section, let $f \in \kk[x_1,\dotsc,x_n]$ be a non-degenerate polynomial, and we continue adopting the conventions outlined at the start of \S\ref{4.2} and in \ref{X:convention-4.2-A}. We show next that the multi-weighted blow-up of $\AA^n$ along $\Upgamma^\dag$ (cf. Definition~\ref{D:multi-weighted-blow-up}): \[
    \Uppi_{\Sigma^\dag} \; \colon \; \sX_{\Sigma^\dag} \; \xrightarrow{\quad\quad} \; \AA^n
\]
supplies a stack-theoretic embedded desingularization of $V(f) \subset \AA^n$ above the origin $\bzero \in \AA^n$ (Definition~\ref{D:stack-desingularization}). Let us first make this goal concrete.

\begin{xpar}
For the remainder of this section, we write \[
    f \; = \; \sum_{\ba \in \NN^n}{c_\ba \cdot \pmb{x}^\ba} \; \in \; \kk[x_1,\dotsc,x_n] 
\]
where $c_\bzero = f(\bzero) = 0$, and adopt the notations in \ref{X:explicating-multi-weighted-blow-up} (but with $\Sigma$ there replaced by $\Sigma^\dag$ here). By \ref{X:explicating-multi-weighted-blow-up}(i), the total transform of $f$ under $\Uppi_{\Sigma^\dag}$ is: \[
    \Uppi_{\Sigma^\dag}^\#(f) \; = \; \sum_{\ba \in \NN^n}{c_\ba \cdot (\pmb{x}')^\ba \cdot \prod_{\rho \in \Sigma^\dag[\exc]}{(x_\rho')^{\ba \cdot \bu_\rho}}}
\]
where for each $\ba = (a_1,\dotsc,a_n) \in \NN^n$, $(\pmb{x}')^\ba := (x_1')^{a_1} \dotsm (x_n')^{a_n}$. Next, for each $\rho \in \Sigma^\dag[1] = [n] \sqcup \Sigma^\dag[\exc]$ (cf. conventions at the start of \ref{X:explicating-multi-weighted-blow-up}), we set: \begin{equation}\label{EQ:N-rho}
    N_\rho \; := \; N_{\tau_\rho} \; = \; \inf_{\ba \in \Upgamma(f)}{\ba \cdot \bu_\rho} \; = \; \inf_{\ba \in \Upgamma^\dag}{\ba \cdot \bu_\rho}
\end{equation}
cf. \ref{X:alt-characterization}, \ref{X:convention-2.2-B} and \ref{X:piecewise-linear-convex-function-dagger}. \ChMingHao{In the same way as \ref{X:toric-desingularization}}, we define the proper transform of $f$ under $\Uppi_{\Sigma^\dag}$ as: \begin{equation}\label{EQ:proper-transform}
    f' \; := \; \frac{\Uppi^\#_{\Sigma^\dag}(f)}{\prod_{\rho \in \Sigma^\dag[1]}{(x_\rho')^{N_\rho}}} \; = \; \sum_{\ba \in \NN^n}{c_\ba \cdot (\pmb{x}')^{\ba - \bn} \cdot \prod_{\rho \in \Sigma^\dag[\exc]}{(x_\rho')^{\ba \cdot \bu_\rho - N_\rho}}}
\end{equation}
where $\bn := \bigl(N_i \colon i \in [n]\bigr)$. We can now state our goal more precisely in the following theorem:
\end{xpar}

\begin{theorem}\label{T:desingularization-new}
At points in $\Uppi_{\Sigma^\dag}^{-1}(\bzero) \subset \sX_{\Sigma^\dag}$, the divisor \[
    V(f') \; \subset \; \sX_{\Sigma^\dag}
\]
is smooth and intersects the divisors $\bigl\{ V(x_\rho') \subset \sX_{\Sigma^\dag} \colon \rho \in \Sigma^\dag[1],\; N_\rho > 0 \bigr\}$ transversely. In other words, \[
    \Uppi_{\Sigma^\dag}^{-1}\bigl(V(f)\bigr) \; \subset \; \sX_{\Sigma^\dag}
\]
is a simple normal crossings divisor at \ChMingHao{every point} in $\Uppi_\Sigma^{-1}(\bzero) \subset \sX_{\Sigma^\dag}$.
\end{theorem}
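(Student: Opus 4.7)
Since SNC is étale-local, I plan to verify the condition at each point $\tilde p$ of $\Uppi_{\Sigma^\dag}^{-1}(\bzero)$ by working on the smooth toric cover $U_\sigma$ of a chart $D_+(\sigma)$ with $\sigma \in \Sigma^\dag[\max]$ and $\tilde p \in D_+(\sigma)$. The orbit-cone correspondence (\ref{X:explicating-multi-weighted-blow-up}(iv)) and Corollary~\ref{C:preimage-of-origin} attach to $\tilde p$ a unique cone $\sigma_p \in \overline{\Sigma}^\dag$ with $\sigma_p \sqsubset \sigma$, not contained in any coordinate hyperplane of $N_\RR$, characterised by $x_\rho'(\tilde p) = 0 \iff \rho \in \sigma_p[1]$. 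Using \eqref{EQ:proper-transform}, a monomial $c_\ba \pmb{x}^\ba$ of $f$ contributes a nonvanishing scalar to $f'(\tilde p)$ precisely when $\ba \in \underline{\varsigma}_{\sigma_p} := \bigcap\{\tau_\rho : \rho \in \sigma_p[1]\}$ (with $\tau_{\RR_{\geq 0}\be_i}$ interpreted as $H_i \cap \Upgamma_+(f)$ when $N_i = 0$). Lemma~\ref{L:new-cone} then dichotomises the analysis according to whether $\sigma_p$ is old or new (Definition~\ref{D:new-old-cones}).

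When $\sigma_p$ is old, Lemma~\ref{L:new-cone} and the non-containment hypothesis give that $\underline{\varsigma}_{\sigma_p}$ is a compact face of $\Upgamma_+(f)$. Factoring out the invertible monomial in $\{x_\rho'(\tilde p) : \rho \notin \sigma_p[1]\}$, the initial part of $f'$ at $\tilde p$ is the pullback of the face polynomial $f_{\underline{\varsigma}_{\sigma_p}}$ restricted to $\Gm^n$; non-degeneracy of $f$ on $\underline{\varsigma}_{\sigma_p}$ then yields the smoothness of $V(f')$ at $\tilde p$ together with its transversality to the coordinate divisors $\{V(x_\rho') : \rho \in \sigma_p[1]\}$. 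This case essentially replays the chart-local argument behind Theorem~\ref{T:toric-desingularization}.

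When $\sigma_p$ is new, Lemma~\ref{L:new-cone} gives $\underline{\varsigma}_{\sigma_p} = \varnothing$, and hence $\tilde p \in V(f')$. Let $\sigma_p'$ be the smallest cone of $\Sigma^\dag$ with $\sigma_p \sqsubset \sigma_p'$; then $\sigma_p'$ is itself new, for otherwise $\sigma_p$ would also be old. Proposition~\ref{P:new-cone-II}(i) then produces some $\intercal \in \dB_{/\sim}$ with $\RR_{\geq 0}\be_{b(\intercal)} \in \sigma_p'[1]$, and in fact $\RR_{\geq 0}\be_{b(\intercal)} \in \sigma_p[1]$: if not, $\sigma_p$ would be inscribed in $\sigma_p'^\circ$, which by Proposition~\ref{P:new-cone-II}(ii) is old. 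Crucially $N_{b(\intercal)} = 0$ by \ref{X:B_1-facets}(iv), so $V(x_{b(\intercal)}')$ is \emph{not} an exceptional component to be tested for transversality. I then compute $\partial_{x_{b(\intercal)}'} f'(\tilde p)$: the contributing monomials are those $\ba \in \underline{\varsigma}^\circ_{\sigma_p'} := \bigcap\{\tau_\rho : \rho \in \sigma_p'[1] \smallsetminus \{\RR_{\geq 0}\be_{b(\intercal)}\}\}$ with $a_{b(\intercal)} = 1$, and Proposition~\ref{P:new-cone-dual-to-compact}, applicable since the face of $\Upgamma_+^\dag$ dual to $\sigma_p'$ is compact (by the $\Upgamma_+^\dag$-analog of Corollary~\ref{C:cone-face-correspondence-B}), singles out the apex $\bv_\intercal$ as the unique such $\ba$, with $c_{\bv_\intercal} \neq 0$ since $\bv_\intercal \in \vertex(\Upgamma_+(f))$. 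Hence $\partial_{x_{b(\intercal)}'} f'(\tilde p)$ is a nonzero unit, so $df'|_{\tilde p}$ has a nonzero $dx_{b(\intercal)}'$-component lying outside the span of $\{dx_\rho'|_{\tilde p} : \rho \in \sigma_p[1],\ N_\rho > 0\}$, furnishing both smoothness of $V(f')$ at $\tilde p$ and transversality to the meeting exceptional divisors.

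The principal obstacle is the new case: unlike in Theorem~\ref{T:toric-desingularization}, non-degeneracy of $f$ does \emph{not} directly supply the smoothness of $V(f')$ at $\tilde p$ (since $\underline{\varsigma}_{\sigma_p} = \varnothing$), and one must instead extract a ``hidden'' linear term of $f'$ from the apex monomial $c_{\bv_\intercal} \pmb{x}^{\bv_\intercal}$. The isolation and uniqueness of this contribution require the delicate geometric analysis of \S\ref{4.2}, which itself rests on the consistent-base-direction hypothesis imposed on $\dB$; the presence of the ``non-exceptional'' coordinate $x_{b(\intercal)}'$ (guaranteed by $N_{b(\intercal)} = 0$) is precisely what allows $df'|_{\tilde p}$ to avoid the exceptional span in the SNC check.
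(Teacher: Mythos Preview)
Your proposal follows essentially the same route as the paper's proof: stratify $\Uppi_{\Sigma^\dag}^{-1}(\bzero)$ by the orbits $O(\sigma)$ for $\sigma \in \overline{\Sigma}^\dag$ not contained in any coordinate hyperplane, treat old cones via non-degeneracy exactly as in Theorem~\ref{T:toric-desingularization}, and for new cones extract the apex monomial $c_{\bv_\intercal}\pmb{x}^{\bv_\intercal}$ as a linear term in $x_{b(\intercal)}'$ via Proposition~\ref{P:new-cone-dual-to-compact}.

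There is one step you glossed over that the paper handles explicitly. In the new case, the monomials contributing to $\partial_{x_{b(\intercal)}'} f'(\tilde p)$ are those $\ba$ lying in $\underline{\varsigma}^\circ_{\sigma_p} := \bigcap\{\tau_\rho : \rho \in \sigma_p[1] \smallsetminus \{\RR_{\geq 0}\be_{b(\intercal)}\}\}$ with $a_{b(\intercal)} = 1$ --- this is governed by $\sigma_p$, not $\sigma_p'$, because it is $\sigma_p[1]$ that records which $x_\rho'$ vanish at $\tilde p$. Since $\sigma_p[1] \subset \sigma_p'[1]$, one has $\underline{\varsigma}^\circ_{\sigma_p'} \subset \underline{\varsigma}^\circ_{\sigma_p}$, so your appeal to Proposition~\ref{P:new-cone-dual-to-compact} (which is stated for the cone $\sigma_p' \in \Sigma^\dag$) only rules out extra contributions from the smaller set; a priori there could be further $\ba \in \underline{\varsigma}^\circ_{\sigma_p} \smallsetminus \underline{\varsigma}^\circ_{\sigma_p'}$ with $a_{b(\intercal)} = 1$ spoiling the unit. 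The paper closes this by showing $\underline{\varsigma}^\circ_{\sigma_p} = \underline{\varsigma}^\circ_{\sigma_p'}$: since $(\sigma_p')^\circ$ is the \emph{smallest} cone in $\Sigma^\dag$ in which $\sigma_p^\circ$ can be inscribed (else $\sigma_p$ would be inscribed in a proper face of $\sigma_p'$), one has $\bigcap\{\tau_\rho^\dag : \rho \in \sigma_p^\circ[1]\} = \bigcap\{\tau_\rho^\dag : \rho \in (\sigma_p')^\circ[1]\}$ by Corollary~\ref{C:cone-face-correspondence-A}, and intersecting with $\Upgamma_+(f)$ gives the equality. With this patch your argument is complete and matches the paper's.
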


\ChMingHao{\begin{remark}
The theorem says $\Uppi_{\Sigma^\dag}$ is a stack-theoretic embedded desingularization of $V(f)$ above $\bzero \in \AA^n$. However, unlike Theorem~\ref{T:toric-desingularization}, $\Uppi_{\Sigma^\dag}$ is not going to be a stack-theoretic embedded desingularization of $V(f) \cup V(x_1x_2\dotsm x_n)$ above $\bzero \in \AA^n$. This will manifest in the proof below.
\end{remark}}

\begin{proof}
We prove this \ChMingHao{theorem} in steps.

\begin{xpar}
Let $\overline{\Sigma}^\dag$ be the augmentation of $\Sigma^\dag$, cf. \ref{X:cox-construction}. For an arbitrary cone $\sigma$ in $\overline{\Sigma}^\dag$, we will need a simplified presentation for the $\sigma$-chart $D_+(\sigma)$ of $\sX_{\Sigma^\dag}$. Let us first recall from \ref{X:explicating-multi-weighted-blow-up}(ii) that \[
    D_+(\sigma) \; = \; \left[\Spec\left(\kk[x_1',\dotsc,x_n']\bigl[x_\rho' \colon \rho \in \Sigma^\dag[\exc]\bigr]\bigl[x_\sigma'^{-1}\bigr]\right) \q \Gm^{\Sigma^\dag[\exc]}\right]
\]
where $x_\sigma' = \prod_{\rho \in \Sigma^\dag[1] \smallsetminus \sigma[1]}{x_\rho'}$. Since $x_\rho'$ is invertible on $D_+(\sigma)$ for $\rho \in \Sigma^\dag[\exc] \smallsetminus \sigma[1]$, and their $\ZZ^{\Sigma^\dag[\exc]}$-weights $\{-\be_\rho \colon \rho \in \Sigma^\dag[\exc] \smallsetminus \sigma[1]\}$ are linearly independent over $\ZZ$ (\ref{X:explicating-multi-weighted-blow-up}(iii)), we observe from \cite[Lemma 1.3.1]{quek-rydh-weighted-blow-up} that by setting \[
    x_\rho' = 1 \qquad \textrm{for every } \rho \in \Sigma^\dag[\exc] \smallsetminus \sigma[1]
\]
we obtain an isomorphism: \begin{equation}\label{EQ:simplified-presentation}
    D_+(\sigma) \; = \; \left[\Spec\bigl(\kk[x_1',\dotsc,x_n']\bigl[x_\rho' \colon \rho \in \sigma[\exc]\bigr]\bigl[x_\sigma'^{-1}\bigr]\bigr) \q \Gm^{\sigma[\exc]}\right]
\end{equation}
where: \begin{enumerate}
    \item $\sigma[\exc] := \Sigma^\dag[\exc] \cap \sigma[1]$.
    \item $x_\sigma'$ becomes $\prod_{i \in [n] \smallsetminus \sigma[1]}{x_i'}$.
    \item The action $\Gm^{\sigma[\exc]} \curvearrowright \Spec(\kk[x_1',\dotsc,x_n'][x_\rho' \colon \rho \in \sigma[\exc][x_\sigma'^{-1}])$ is specified as follows. For each $i \in [n]$, the $\ZZ^{\sigma[\exc]}$-weight of $x_i'$ is $(u_{\rho,i})_{\rho \in \sigma[\exc]}$, and for each $\rho \in \sigma[\exc]$, the $\ZZ^{\sigma[\exc]}$-weight of $x_\rho'$ is $-\be_\rho \in \ZZ^{\sigma[\exc]}$. 
\end{enumerate}
On the right hand side of \eqref{EQ:simplified-presentation}, the expression for the proper transform $f'$ of $f$ under $\Uppi_{\Sigma^\dag}$ becomes: \begin{equation}\label{EQ:proper-transform-II}
    f' \; = \; \sum_{\ba \in \NN^n}{c_\ba \cdot (\pmb{x}')^{\ba - \bn} \cdot \prod_{\rho \in \sigma[\exc]}{(x_\rho')^{\ba \cdot \bu_\rho - N_\rho}}}.
\end{equation}
\end{xpar}

\begin{xpar}
For an arbitrary cone $\sigma$ in $\overline{\Sigma}^\dag$, we deduce from \eqref{EQ:simplified-presentation} an expression for the $(\Gm^{\Sigma^\dag[1]} \q \Gm^{\Sigma^\dag[\exc]})$-orbit $O(\sigma)$ of $\sX_{\Sigma^\dag}$ corresponding to $\sigma$, cf. \ref{X:explicating-multi-weighted-blow-up}(iv): \begin{align}\label{EQ:orbit-sigma}
    \begin{split}
    O(\sigma) \; &= \; \left[\Spec\bigl(\kk\bigl[x_i^\pm \colon i \in [n] \smallsetminus \sigma[1]\bigr]\bigr) \q \Gm^{\sigma[\exc]} \right] \\
    &= \; V\bigl(x_\rho' \colon \rho \in \sigma[1]\bigr) \; \xhookrightarrow{\quad\textrm{closed}\quad} \; D_+(\sigma).
    \end{split}
\end{align}
For $\sigma \in \overline{\Sigma}^\dag$ not contained in any coordinate hyperplane $\{\be_i^\vee = 0\}$ in $N_\RR$, we claim that at \ChMingHao{every point} in $O(\sigma)$, the divisor $V(f') \subset \sX_{\Sigma^\dag}$ is smooth and intersects the divisors in $\{ V(x_\rho') \subset \sX_{\Sigma^\dag} \colon \rho \in \sigma[1],\; N_\rho > 0 \}$ transversely. By Corollary~\ref{C:preimage-of-origin}, this claim proves \ChMingHao{Theorem~\ref{T:desingularization-new}}. We consider two cases. 
\end{xpar}

\begin{xpar}[\underline{\emph{Case A}}]\label{X:case-A}
Assume that $\sigma$ is old. Using the simplified expression for $D_+(\sigma)$ in \eqref{EQ:simplified-presentation} and the corresponding expression for $f'$ in \eqref{EQ:proper-transform-II}, we claim: \begin{align}\label{EQ:proper-transform-III}
    \begin{split}
    f'|_{V(x_\rho' \colon \rho \in \sigma[1])} \; &= \; \sum_{\ba \in \NN^n \cap \; \bigcap\{\tau_\rho \colon \rho \in \sigma[1]\}}{c_\ba \cdot (\pmb{x}')^{\ba-\bn}} \\
    &= \; \sum_{\ba \in \NN^n \cap \; \bigcap\{\tau_\rho \colon \rho \in \sigma[1]\}}{c_\ba \cdot \prod_{i \in [n] \smallsetminus \sigma[1]}{(x_i')^{a_i - N_i}}}.
    \end{split}
\end{align}
Indeed, the only $\ba \in \NN^n$, whose corresponding monomial \[
    (\pmb{x}')^{\ba-\bn} \cdot \prod_{\rho \in \sigma[\exc]}{(x_\rho')^{\ba \cdot \bu_\rho - N_\rho}}
\]
in $f'$ remains non-zero after setting $x_\rho' = 0$ for all $\rho \in \sigma[1]$, must satisfy: \begin{enumerate}
    \item $\ba \cdot \bu_\rho = N_\rho$ for every $\rho \in \sigma[\exc]$, i.e. $\ba \in \tau_\rho$ for every $\rho \in \sigma[\exc]$;
    \item $\ba \cdot \be_i = a_i = N_i$ for every $i \in [n] \cap \sigma[1]$, i.e. $\ba \in \tau_i$ for every $i \in [n] \cap \sigma[1]$.
\end{enumerate}
Next, since $\sigma$ is old, $\bigcap\{\tau_\rho \colon \rho \in \sigma[1]\}$ is a (non-empty) compact face $\underline{\varsigma} \prec \Upgamma$, cf. \ref{X:preimage-of-origin} and Lemma~\ref{L:new-cone}. Then the expression for $f'|_{V(x_\rho' \colon \rho \in \sigma[1])}$ in \eqref{EQ:proper-transform-III} matches the expression for $f_{\underline{\varsigma}} / \pmb{x}^\bn$ \eqref{EQ:f-tau}, after replacing $x_i'$ in the former with $x_i$ for each $i \in [n] \smallsetminus \sigma[1]$. By the non-degeneracy assumption on $f$, $f_{\underline{\varsigma}}/\pmb{x}^\bn$ is smooth on the torus $\Gm^n \subset \AA^n$, so that \[
    V\bigl(f'|_{V(x_\rho' \colon \rho \in \sigma[1])}\bigr) \; \subset \; O(\sigma)
\]
is smooth, i.e. at \ChMingHao{every point} in $O(\sigma) \subset \sX_{\Sigma^\dag}$, $V(f') \subset \sX_{\Sigma^\dag}$ is smooth and intersects the divisors in $\{V(x_\rho') \subset \sX_{\Sigma^\dag} \colon \rho \in \sigma[1]\}$ transversely.
\end{xpar}

\begin{xpar}[\underline{\emph{Case B}}]\label{X:case-B}
Assume that $\sigma$ is new. Let $\sigma'$ be the \emph{smallest} cone in $\Sigma^\dag$ such that $\sigma \sqsubset \sigma'$. With respect to $\sigma'$, we fix, as in \ref{X:convention-4.2-A}, a corresponding $\intercal \in \dB_{/\sim}$ with apex $\bv_\intercal$ and base direction $b(\intercal)$, such that all the hypotheses, observations and results in \S\ref{4.2} hold. In particular, $\RR_{\geq 0}\be_{b(\intercal)}$ must be an extremal ray of $\sigma$, or else $\sigma$ is old by Proposition~\ref{P:new-cone-II}(ii). Letting $\sigma^\circ$ \ChMingHao{be the cone in $N_\RR$ generated by the rays in $\sigma[1] \smallsetminus \{\langle\be_{b(\intercal)}\rangle\}$}, we consider the following factorization of \eqref{EQ:orbit-sigma}: \allowdisplaybreaks\begin{align*}
    O(\sigma) \; & = \; V\bigl(x_\rho' \colon \rho \in \sigma[1]\bigr) \; = \; \left[\Spec\bigl(\kk\bigl[x_i^\pm \colon i \in [n] \smallsetminus \sigma[1]\bigr]\bigr) \q \Gm^{\sigma[\exc]} \right] \\
    &\hookrightarrow \; V\bigl(x_\rho' \colon \rho \in \sigma^\circ[1]\bigr) \; = \; \left[\Spec\bigl(\kk[x_{b(\intercal)}']\bigl[x_i^\pm \colon i \in [n] \smallsetminus \sigma[1]\bigr]\bigr) \q \Gm^{\sigma[\exc]} \right] \\
    &\hookrightarrow \; D_+(\sigma)
\end{align*}
where the expression for $V(x_\rho' \colon \rho \in \sigma^\circ[1])$ is similarly deduced from \eqref{EQ:simplified-presentation}. Next, set $\underline{\varsigma}^\circ := \bigcap\{\tau_\rho \colon \rho \in \sigma^\circ[1]\}$. Similar to \underline{Case A}, we have: \begin{align}\label{EQ:proper-transform-IV}
    \begin{split}
    f'|_{V(x_\rho' \colon \rho \in \sigma^\circ[1])} \; &= \; \sum_{\ba \in \NN^n \cap \; \underline{\varsigma}^\circ}{c_\ba \cdot (\pmb{x}')^{\ba-\bn}} \\
    &= \; \sum_{\ba \in \NN^n \cap \; \underline{\varsigma}^\circ}{c_\ba \cdot (x_{b(\intercal)}')^{a_{b(\intercal)}} \cdot \prod_{i \in [n] \smallsetminus \sigma[1]}{(x_i')^{a_i - N_i}}}.
    \end{split}
\end{align}
(Recall that $N_{b(\intercal)} = 0$, cf. \ref{X:B_1-facets}(iii).) We now \emph{\underline{claim}} that there exists $g \in \kk[x_{b(\intercal)}']\bigl[x_i' \colon i \in [n] \smallsetminus \sigma[1]\bigr]$ such that \begin{equation}\label{EQ:new}
    f'|_{V(x_\rho' \colon \rho \in \sigma^\circ[1])} \; = \; c_{\bv_\intercal} \cdot x_{b(\intercal)}' \cdot \prod_{i \in [n] \smallsetminus \sigma[1]}{(x_i')^{v_i-N_i}} \; + \; (x_{b(\intercal)}')^2 \cdot g
\end{equation}
where each $v_i$ is the $i$\textsuperscript{th} coordinate of $\bv_\intercal$. If $\sigma \in \Sigma^\dag$ (i.e. $\sigma = \sigma'$), then this follows from \eqref{EQ:proper-transform-IV}, Proposition~\ref{P:new-cone-dual-to-compact}, and \ref{X:preimage-of-origin}. \ChMingHao{The general case can be reduced to the aforementioned case where $\sigma \in \Sigma^\dag$. This reduction is standard (similar to \ref{X:preimage-of-origin}), so we have chosen to explicate this separately in Remark~\ref{R:addition} below.} 

Consequently, we deduce from \eqref{EQ:new} that \[
    \frac{\partial f'|_{V(x_\rho' \colon \rho \in \sigma^\circ[1])}}{\partial x_{b(\intercal)}'}\Big|_{V(x_{b(\intercal)}')} \; = \; c_{\bv_\intercal} \cdot \prod_{i \in [n] \smallsetminus \sigma[1]}{(x_i')^{v_i - N_i}}
\]
which is a unit on $O(\sigma) = V(x_\rho' \colon \rho \in \sigma^\circ[1]) \cap V(x_{b(\intercal)}')$, since $c_{\bv_\intercal} \neq 0$ ($\bv_\intercal$ is a vertex of $\Upgamma(f)$) and $x_i'$ is invertible on $O(\sigma)$ for each $i \in [n] \smallsetminus \sigma[1]$ \eqref{EQ:orbit-sigma}. Thus, $V\bigl(f'|_{V(x_\rho' \colon \rho \in \sigma^\circ[1])}\bigr)$ is smooth at \ChMingHao{every point} in $O(\sigma) \subset V\bigl(x_\rho' \colon \rho \in \sigma^\circ[1]\bigr)$, i.e. at \ChMingHao{every point} in $O(\sigma) \subset \sX_{\Sigma^\dag}$, $V(f') \subset \sX_{\Sigma^\dag}$ is smooth and intersects the divisors in \[
    \bigl\{V(x_\rho') \subset \sX_{\Sigma^\dag} \colon \rho \in \sigma[1],\; N_\rho > 0\bigr\} \; \subset \; \bigl\{V(x_\rho') \subset \sX_{\Sigma^\dag} \colon \rho \in \sigma^\circ[1]\bigr\}
\]
transversely. This completes the proof.
\end{xpar}
\end{proof}

\begin{remark}\label{R:addition}
\ChMingHao{In this remark, we prove \eqref{EQ:new} for all cones $\sigma \in \overline{\Sigma}^\dag$. Retain the notation in the above proof.} Letting $(\sigma')^\circ$ \ChMingHao{be the cone in $N_\RR$ generated by the rays in $\sigma'[1] \smallsetminus \{\langle \be_{b(\intercal)}\rangle\}$} (as in Proposition~\ref{P:new-cone-II}(ii)), we have $\sigma^\circ \sqsubset (\sigma')^\circ$. In fact, $(\sigma')^\circ$ is also the \emph{smallest} cone in $\Sigma^\dag$ such that $\sigma^\circ \sqsubset (\sigma')^\circ$. If not, $\sigma^\circ$ lies in a proper face of $(\sigma')^\circ$. Since $(\sigma')^\circ \prec \sigma'$ (Proposition~\ref{P:new-cone-II}(ii)), $\sigma = \sigma^\circ + \ChMingHao{\langle} \be_{b(\intercal)} \ChMingHao{\rangle}$ must also lie in a proper face of $\sigma' = (\sigma')^\circ + \ChMingHao{\langle}\be_{b(\intercal)}\ChMingHao{\rangle}$, contradicting our choice of $\sigma'$. Consequently, \[
    \bigcap\bigl\{\tau_\rho^\dag \colon \rho \in \sigma^\circ[1]\bigr\} \; = \; \bigcap\bigl\{\tau_\rho^\dag \colon \rho \in (\sigma')^\circ[1]\bigr\}
\]
cf. \ref{X:cone-face-correspondence} and Lemma~\ref{C:cone-face-correspondence-A}. Intersecting both sides of the above equality by $\Upgamma(f)$, we obtain $\underline{\varsigma}^\circ = \bigcap\{\tau_\rho \colon \rho \in (\sigma')^\circ[1]\}$. Then \eqref{EQ:new} follows from the preceding sentence together with \eqref{EQ:proper-transform-IV}, Proposition~\ref{P:new-cone-dual-to-compact}, \ChMingHao{and \ref{X:preimage-of-origin}}.
\end{remark}

We conclude this section by proving the main theorems of this paper:

\begin{proof}[Proof of Theorems~\ref{T:fake-poles} and \ref{T:refined-desingularization}]
After replacing $\Sigma(f)$ with $\Sigma^\dag$, the argument in \ref{X:motivic-change-of-variables-II} works verbatim. Fixing a frugal simplicial subdivision $\pmb{\Upsigma}^\dag$ of $\Sigma^\dag$ (\ref{X:simplicial-refinement}), we have: \[
    \begin{tikzcd}
    \Uppi_{\pmb{\Upsigma}^\dag}^{-1}\bigl(V(f)\bigr) \arrow[to=2-1, twoheadrightarrow, swap, "\textrm{coarse space}"] \arrow[to=1-2, "\textrm{closed}", hookrightarrow] & \sX_{\pmb{\Upsigma}^\dag} \arrow[to=2-2, swap, twoheadrightarrow, "\textrm{coarse space}"] \arrow[to=1-3, hookrightarrow, "\textrm{open}"] & \sX_{\Sigma^\dag} \arrow[to=1-4, "\Uppi_{\Sigma^\dag}"] & \AA^n \\
    \uppi_{\pmb{\Upsigma}^\dag}^{-1}\bigl(V(f)\bigr) \arrow[to=2-2, "\textrm{closed}", hookrightarrow] & X_{\pmb{\Upsigma}^\dag} \arrow[to=1-4, "\uppi_{\Sigma^\dag}", swap, bend right=15]
    \end{tikzcd}
\]
where: \begin{enumerate}
    \item $\uppi_{\pmb{\Upsigma}^\dag}$ is proper and birational.
    \item $X_{\pmb{\Upsigma}^\dag}$ has finite quotient singularities (\ref{X:finite-quotient-singularities}).
    \item $\uppi_{\pmb{\Upsigma}^\dag}^{-1}\bigl(V(f)\bigr)$ is a $\QQ$-simple normal crossings divisor \ChMingHao{\cite[Definition 1.6]{veys-motivic-zeta-functions-on-Q-Gorenstein-varieties}} at \ChMingHao{every point} in $\uppi_{\pmb{\Upsigma}^\dag}^{-1}(\bzero) \subset X_{\pmb{\Upsigma}^\dag}$. Indeed, $\Uppi_{\pmb{\Upsigma}^\dag}$ \ChMingHao{factors as $\sX_{\pmb{\Upsigma}^\dag} \xhookrightarrow{\textrm{open}} \sX_{\Sigma^\dag} \xrightarrow{\Uppi_{\Sigma^\dag}} \AA^n$ in the above diagram}. We therefore deduce, from \eqref{EQ:proper-transform}, that: \begin{equation}\label{EQ:proof-thm-A}
        \qquad \qquad \Uppi_{\pmb{\Upsigma}^\dag}^{-1}\bigl(V(f)\bigr) \; = \; V(f') \; + \; \sum_{\rho \in \Sigma^\dag[1]}{N_\rho \cdot V(x_\rho')}
    \end{equation}
    where each $V(x_\rho')$, as well as $V(f')$, is now regarded as a divisor in $\sX_{\pmb{\Upsigma}^\dag} \xhookrightarrow{\textrm{open}} \sX_{\Sigma^\dag}$. By Theorem~\ref{T:desingularization-new}, $\Uppi_{\pmb{\Upsigma}^\dag}^{-1}\bigl(V(f)\bigr)$ is a simple normal crossings divisor at \ChMingHao{every point} in $\Uppi_{\pmb{\Upsigma}^\dag}^{-1}(\bzero) = \Uppi_{\Sigma^\dag}^{-1}(\bzero) \cap \sX_{\pmb{\Upsigma}^\dag}$. It remains to note that $\uppi_{\pmb{\Upsigma}^\dag}^{-1}\bigl(V(f)\bigr)$ is the coarse space of $\Uppi_{\pmb{\Upsigma}^\dag}^{-1}\bigl(V(f)\bigr)$, since the coarse space $\sX_{\pmb{\Upsigma}^\dag} \to X_{\pmb{\Upsigma}^\dag}$ maps the latter onto the former.
\end{enumerate}
That is, $\uppi_{\pmb{\Upsigma}^\dag} \colon X_{\pmb{\Upsigma}^\dag} \to \AA^n$ is an {\sffamily embedded $\QQ$-desingularization of $V(f) \subset \AA^n$ above the origin $\bzero \in \AA^n$}, in the sense that it satisfies (i), (ii) and (iii) above. As noted in \ref{X:motivic-change-of-variables-II}, \cite[Theorem 4]{veys-motivic-zeta-functions-on-Q-Gorenstein-varieties} applies more generally to our case of $\uppi := \uppi_{\pmb{\Upsigma}^\dag}$, $D_1 := V(f)$, $D_2 := 0$, and $W = \{\bzero\}$. Together with \eqref{EQ:relative-canonical-divisor} and \eqref{EQ:proof-thm-A}, we deduce that $Z_{\mot,\bzero}(f;s)$ lies in
\[
    \sM_\kk\bigl[\LL^{-s}\bigr]\left[\frac{1}{1-\LL^{-(s+1)}}\right]\left[\frac{1}{1-\LL^{-(N_\rho s + \abs{\bu_\rho})}} \colon \rho \in \Sigma^\dag[1] = \Sigma(f)[1] \smallsetminus \Sigma(f)[\dB] \right]
\]
i.e. $\Uptheta^{\dag,\dB}(f) = \{-1\}\ \cup \bigl\{-\frac{\abs{\bu_\rho}}{N_\rho} \colon \rho \in \Sigma(f)[1] \smallsetminus \Sigma(f)[\dB]$ with $N_\rho > 0 \bigr\}$ is indeed a set of candidate poles for $Z_{\mot,\bzero}(f;s)$.
\end{proof}

\section{Further remarks and future directions}\label{5}

\subsection{On a potential refinement of \texorpdfstring{Theorem~\ref{T:fake-poles}}{Theorem A} in the case of \texorpdfstring{$B_1$}{B1}-facets}\label{5.1} 
In this section, we revisit Theorem~\ref{T:fake-poles} and explain why the theorem does not seem to give a complete answer even in the case of $B_1$-facets. Recall that $Z_{\tg,\bzero}(f;s)$ denotes the topological zeta function of $f$ at the origin $\bzero \in \AA^n$, cf. \ref{X:difficulty} and Remark~\ref{R:topological-zeta-function}.

\begin{xpar}\label{X:ELT}
Using our conventions, \cite[Proposition 3.8]{esterov-lemahieu-takeuchi-monodromy-conjecture} can be stated as follows. Let $\cS_\circ \subset \Uptheta(f) \smallsetminus \{-1\}$. If $\cF(f;s_\circ)$ is a set of $B_1$-facets with consistent base directions for every $s_\circ \in \cS_\circ$, then every pole of $Z_{\tg,\bzero}(f;s)$ is contained in $\Uptheta(f) \smallsetminus \cS_\circ$. This can be seen as a consequence of our Theorem~\ref{T:fake-poles} as follows. Indeed, we first note an immediate consequence of Theorem~\ref{T:fake-poles}:
\end{xpar}

\begin{corollary}\label{C:fake-poles}
Let $s_\circ \in \Uptheta(f) \smallsetminus \{-1\}$. If $\cF(f;s_\circ)$ is a set of $B_1$-facets with consistent base directions, then $\Uptheta(f) \smallsetminus \{s_\circ\}$ is a set of candidate poles for $Z_{\mot,\bzero}(f;s)$. 
\end{corollary}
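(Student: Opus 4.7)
The plan is to deduce Corollary~\ref{C:fake-poles} as an immediate specialization of Theorem~\ref{T:fake-poles}, by taking the set $\dB$ in that theorem to be precisely $\cF(f;s_\circ)$.

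More concretely, first I would set $\dB := \cF(f;s_\circ) = \{\tau \prec^1 \Upgamma_+(f) \colon N_\tau > 0,\; s_\tau = s_\circ\}$. The hypothesis of the corollary asserts that this $\dB$ is a set of $B_1$-facets of $\Upgamma_+(f)$ with consistent base directions, so Theorem~\ref{T:fake-poles} is directly applicable to this choice of $\dB$, and yields that
\[
\Uptheta^{\dag,\dB}(f) \; = \; \{-1\} \; \cup \; \bigl\{s_\tau \colon \tau \prec^1 \Upgamma_+(f),\; N_\tau > 0,\; \tau \notin \cF(f;s_\circ)\bigr\}
\]
is a set of candidate poles for $Z_{\mot,\bzero}(f;s)$.

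The remaining step is a set-theoretic identification of $\Uptheta^{\dag,\dB}(f)$ with $\Uptheta(f) \smallsetminus \{s_\circ\}$. For any facet $\tau \prec^1 \Upgamma_+(f)$ with $N_\tau > 0$, the condition $\tau \notin \cF(f;s_\circ)$ is equivalent to $s_\tau \neq s_\circ$, so that
\[
\Uptheta^{\dag,\dB}(f) \; = \; \{-1\} \; \cup \; \bigl\{s_\tau \colon \tau \prec^1 \Upgamma_+(f),\; N_\tau > 0,\; s_\tau \neq s_\circ\bigr\}.
\]
Using the standing assumption $s_\circ \neq -1$, the right-hand side equals $\Uptheta(f) \smallsetminus \{s_\circ\}$, which finishes the argument.

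There is no genuine obstacle to overcome here: Theorem~\ref{T:fake-poles} already encodes all of the geometric content (the stack-theoretic embedded desingularization coming from the $\dB$-cut of $\Upgamma_+(f)$ in Theorem~\ref{T:refined-desingularization}, together with the motivic change of variables in \ref{X:motivic-change-of-variables-II}). The only thing to check is the bookkeeping identity above, which hinges on the mild observation that $-1$ is preserved on both sides precisely because $s_\circ \in \Uptheta(f) \smallsetminus \{-1\}$.
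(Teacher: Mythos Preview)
Your proposal is correct and matches the paper's approach exactly: the paper states this corollary as ``an immediate consequence of Theorem~\ref{T:fake-poles}'' without further elaboration, and your argument supplies precisely the obvious specialization $\dB = \cF(f;s_\circ)$ together with the set-theoretic bookkeeping that the paper leaves implicit.
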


\begin{proof}[Proof of statement in \ref{X:ELT}]
In view of Remark~\ref{R:topological-zeta-function}, Corollary~\ref{C:fake-poles} in particular implies that for every $s_\circ \in \cS_\circ$, every pole of $Z_{\tg,\bzero}(f;s)$ is contained in $\Uptheta(f) \smallsetminus \{s_\circ\}$. Thus, every pole of $Z_{\tg,\bzero}(f;s)$ is contained in $\Uptheta(f) \smallsetminus \cS_\circ = \bigcap\bigl\{\Uptheta(f) \smallsetminus \{s_\circ\} \colon s_\circ \in \cS_\circ\bigr\}$.
\end{proof}

\begin{xpar}\label{X:motivic-ELT}
Unfortunately, it is not immediate that the motivic analogue of \ref{X:ELT} is true. Namely, for $\cS_\circ \subset \Uptheta(f) \smallsetminus \{-1\}$, one could pose the following question. If $\cF(f;s_\circ)$ is a set of $B_1$-facets with consistent base directions for every $s_\circ \in \cS_\circ$, then is $\Uptheta(f) \smallsetminus \cS_\circ$ a set of candidate poles for $Z_{\mot,\bzero}(f;s)$? 

One key difficulty behind this question lies in our current lack of understanding of the zero divisors in the localized Grothendieck ring of $\kk$-varieties $\sM_\kk = K_0(\Var_\kk)[\LL^{-1}]$. More precisely, while Corollary~\ref{C:fake-poles} says that $\Uptheta(f) \smallsetminus \{s_\circ\}$ is a set of candidate poles for $Z_{\mot,\bzero}(f;s)$ for each $s_\circ \in \cS_\circ$, it is not clear if that would imply that $\Uptheta(f) \smallsetminus \cS_\circ = \bigcap\bigl\{\Uptheta(f) \smallsetminus \{s_\circ\} \colon s_\circ \in \cS_\circ\bigr\}$ is a set of candidate poles for $Z_{\mot,\bzero}(f;s)$. 
\end{xpar}

\begin{xpar}\label{X:motivic-ELT-2}
Nevertheless, one could try the following different line of attack to the question posed in \ref{X:motivic-ELT}. Namely, for $\cS_\circ \subset \Uptheta(f) \smallsetminus \{-1\}$, the following would be ideal: if $\cF(f;s_\circ)$ is a set of $B_1$-facets of $\Upgamma(f)$ with consistent base directions for each $s_\circ \in \cS_\circ$, then so is $\cF(f;\cS_\circ) := \bigsqcup\{\cF(f;s_\circ) \colon s_\circ \in \cS_\circ\}$. If this is true, Theorem~\ref{T:fake-poles} would give a positive answer to the question in \ref{X:motivic-ELT}. Unfortunately, in general this statement is just not true. For that reason among others, we believe that the notion of ``consistent base directions'' is \emph{still incomplete} for the case of $B_1$-facets. In what follows, we present a broader notion that is motivated by \cite[Conjecture 1.3(i)]{esterov-lemahieu-takeuchi-monodromy-conjecture}, although for the case of $B_1$-facets, ours is slightly broader than theirs.
\end{xpar}

\begin{definition}\label{D:compatible-B1-facets}
A set $\dB$ of $B_1$-facets of $\Upgamma(f)$ has {\sffamily compatible apices} if there exists, for each facet $\tau \in \dB$, a choice of a distinguished apex $\bv_\tau$ with corresponding base direction $b(\tau)$, such that $b(\tau_1) = b(\tau_2)$ for every pair of adjacent facets $\tau_1, \tau_2 \in \dB$ sharing the same distinguished apex $\bv_{\tau_1} = \bv_{\tau_2}$. In this case we call $\{\bv_\tau \colon \tau \in \dB\}$ a {\sffamily set of compatible apices} for $\dB$. 
\end{definition}

\begin{remark}\label{R:consistent-implies-compatible}
If $\dB$ has consistent base directions, then $\dB$ has compatible apices, cf. \ref{X:consistent-B_1-facets}. 
\end{remark}

In view of \ref{X:motivic-ELT-2}, the next lemma supports the narrative that the notion of ``compatible apices'' is \emph{possibly} the correct notion to consider:

\begin{lemma}\label{L:union-compatible}
Let $\cS_\circ \subset \Uptheta(f) \smallsetminus \{-1\}$. If $\cF(f;s_\circ)$ is a set of $B_1$-facets of $\Upgamma(f)$ with compatible apices for each $s_\circ \in \cS_\circ$, then so is $\cF(f;\cS_\circ) := \bigsqcup\{\cF(f;s_\circ) \colon s_\circ \in \cS_\circ\}$.
\end{lemma}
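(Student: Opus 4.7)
The plan is to construct a compatible apex assignment on $\cF(f;\cS_\circ)$ by inheritance. Since each facet $\tau \prec^1 \Upgamma_+(f)$ has a unique slope $s_\tau$, the sets $\cF(f;s_\circ)$ for $s_\circ \in \cS_\circ$ are pairwise disjoint, and every $\tau \in \cF(f;\cS_\circ)$ belongs to a unique component, say $\cF(f;s_\circ(\tau))$. I first fix a compatible apex assignment on each $\cF(f;s_\circ)$ (guaranteed by hypothesis), and then define the distinguished apex $\bv_\tau$ and base direction $b(\tau)$ of $\tau \in \cF(f;\cS_\circ)$ to be those prescribed inside $\cF(f;s_\circ(\tau))$.

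To verify compatibility of this combined assignment, it suffices to show that for every adjacent pair $\tau_1, \tau_2 \in \cF(f;\cS_\circ)$ with common distinguished apex $\bv$, $\tau_1$ and $\tau_2$ in fact lie in the same component $\cF(f;s_\circ)$---whereupon the desired equality $b(\tau_1) = b(\tau_2)$ follows from the compatibility on that component. In other words, the crux of the proof reduces to the following auxiliary claim: \emph{any two adjacent $B_1$-facets of $\Upgamma_+(f)$ sharing a common apex must have equal slope.}

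To prove this auxiliary claim, let $i_1 := b(\tau_1)$ and $i_2 := b(\tau_2)$ denote the base directions corresponding to $\bv$. The case $i_1 = i_2$ presents no difficulty, so suppose $i_1 \neq i_2$; then $v_{i_1} = v_{i_2} = 1$, and every non-apex vertex $\bw$ of the common facet $\phi := \tau_1 \cap \tau_2$ (an $(n-2)$-dimensional face of $\Upgamma_+(f)$) satisfies $w_{i_1} = w_{i_2} = 0$, because $\bw$ is a non-apex vertex of both $\tau_1$ and $\tau_2$. I will then normalize $\tilde{\bu}_i := \bu_{\tau_i}/N_{\tau_i}$ (so that $s_{\tau_i} = -\bone \cdot \tilde{\bu}_i$), and assemble the constraints $\bv \cdot \tilde{\bu}_i = 1$, $\bw \cdot \tilde{\bu}_i = 1$ for each non-apex vertex $\bw$ of $\phi$, together with $(\tilde{\bu}_i)_j = 0$ for each non-compact direction $j \in J(\phi)$ of $\phi$, into a single linear system. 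The non-apex equations will uniquely determine the entries $(\tilde{u}_j)_{j \notin \{i_1, i_2\} \cup J(\phi)}$; the apex equation will then pin down the sum $\tilde{u}_{i_1} + \tilde{u}_{i_2}$; and since $s_{\tau_i} = -\bone \cdot \tilde{\bu}_i$ depends only on these determined quantities---the residual freedom in splitting $\tilde{u}_{i_1}$ versus $\tilde{u}_{i_2}$ being irrelevant---we will conclude that $s_{\tau_1} = s_{\tau_2}$.

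The main obstacle lies in establishing the solvability and rank properties of this linear system uniformly, especially in degenerate configurations where $\phi$ has many non-compact directions or its non-apex vertices lie in a proper coordinate subspace. I expect that the requirement that $\phi$ is a genuine $(n-2)$-dimensional face of $\Upgamma_+(f)$ (so the compact part of $\phi$ has dimension $n - 2 - \abs{J(\phi)}$ and is spanned by at least that many non-apex vertices), together with the assumption $f(\bzero) = 0$ ensuring $\bzero \notin \Upgamma_+(f)$, will suffice to eliminate the remaining edge cases.
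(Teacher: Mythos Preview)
Your reduction and the computation in the case $i_1 \neq i_2$ are essentially the paper's proof, just phrased as a linear system rather than geometrically. Two points are worth flagging.

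First, your auxiliary claim as stated is \emph{false} in the case $i_1 = i_2$. For example, with $n = 3$ and $\Upgamma_+(f)$ the Newton polyhedron with vertices $(5,0,0)$, $(1,2,0)$, $(0,5,0)$, $(0,0,1)$, the two compact facets meeting along the edge $\conv\bigl((1,2,0),(0,0,1)\bigr)$ are both $B_1$-facets with apex $(0,0,1)$ and base direction $3$, yet their slopes are $-8/5$ and $-9/5$. This does not damage your argument: when $i_1 = i_2$ you have $b(\tau_1) = b(\tau_2)$ directly, so compatibility holds without any appeal to same-slope. You should therefore state the reduction as ``if $\bv_{\tau_1} = \bv_{\tau_2}$ and $b(\tau_1) \neq b(\tau_2)$, then $s_{\tau_1} = s_{\tau_2}$'' (which is exactly what the paper proves by contradiction), not as the blanket same-slope claim.

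Second, the rank worry you identify is precisely what the paper dissolves by a change of viewpoint. Rather than tracking vertices and non-compact directions of $\phi = \tau_1 \cap \tau_2$, observe that your constraints on $\phi$ force $\phi$ to lie in the hyperplane $H := \{a_{i_1} = a_{i_2}\} \subset M_\RR$. Identifying $H \cong \RR^{n-1}$, the affine span of $\phi$ is already codimension~$1$ in $H$, and for each $j$ the affine hyperplane $H_{\tau_j} \cap H$ contains it; since $\bzero \in H \smallsetminus H_{\tau_j}$ (because $N_{\tau_j} > 0$), the intersection $H_{\tau_j} \cap H$ is a genuine hyperplane in $H$, hence \emph{equals} the affine span of $\phi$. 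Thus the equations of $H_{\tau_1}$ and $H_{\tau_2}$, restricted to $H$, are scalar multiples of one another: $u_{\tau_1,k} = \lambda u_{\tau_2,k}$ for $k \notin \{i_1,i_2\}$, $u_{\tau_1,i_1} + u_{\tau_1,i_2} = \lambda(u_{\tau_2,i_1} + u_{\tau_2,i_2})$, and $N_{\tau_1} = \lambda N_{\tau_2}$. This gives $s_{\tau_1} = s_{\tau_2}$ with no case analysis on $J(\phi)$ or the rank of a matrix of vertex coordinates. Your linear-system formulation would arrive at the same place once you note the system amounts to specifying a single affine hyperplane in $H \cong \RR^{n-1}$ through a fixed $(n-2)$-dimensional affine subspace not containing the origin.
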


\begin{proof}
For every $s_\circ \in \cS_\circ$, fix a compatible set of apices $\{\bv_\tau \colon \tau \in \cF(f;s_\circ)\}$ for $\cF(f;s_\circ)$. We claim that $\{\bv_\tau \colon \tau \in \cF(f;\cS_\circ)\}$ is a compatible set of apices for $\cF(f;\cS_\circ)$. Suppose not. Then there exists adjacent facets $\tau_1, \tau_2 \in \cF(f;\cS_\circ)$ such that $\bv_{\tau_1} = \bv_{\tau_2} =: \bv$ but $b(\tau_1) \neq b(\tau_2)$. Letting $\varsigma := \tau_1 \cap \tau_2$, observe that: \begin{enumerate}
    \item $\bv \in \vertex(\varsigma)$, and the $b(\tau_1)$\textsuperscript{th} and $b(\tau_2)$\textsuperscript{th} coordinates of $\bv$ are both $1$.
    \item Any $\bw \in \vertex(\varsigma) \smallsetminus \{\bv\}$ lies in $H_{b(\tau_1)} \cap H_{b(\tau_2)}$.
    \item $\varsigma$ is compact in the $b(\tau_1)$\textsuperscript{st} and $b(\tau_2)$\textsuperscript{th} coordinates.
\end{enumerate} 
Together, these imply that $\varsigma$ is contained in the hyperplane $H$ in $M_\RR$ defined by $\be_{b(\tau_1)} - \be_{b(\tau_2)} = 0$. \ChMingHao{In fact, since $\varsigma \prec^1 \tau_1, \tau_2$, we have $\varsigma = \tau_1 \cap H = \tau_2 \cap H$. For $i=1,2$, $s_{\tau_i}$ is the unique positive rational number for which $s_{\tau_i}^{-1} \cdot (1,1,\dotsc,1)$ lies on the affine span of $\tau_i$, or equivalently, $s_{\tau_i}^{-1} \cdot (1,1,\dotsc,1)$ lies on the affine span of $\varsigma = \tau_i \cap H$. Since that last condition is independent of $i$, we deduce $s_{\tau_1} = s_{\tau_2}$, a contradiction to the first sentence of this proof.} 
\end{proof}

Motivated by \cite[Conjecture 1.3(i)]{esterov-lemahieu-takeuchi-monodromy-conjecture}, one could ask the following:

\begin{question}\label{Q:compatible}
Are the following statements true?
\begin{enumerate}
    \item Let $\dB$ be a set of $B_1$-facets of $\Upgamma(f)$ with compatible apices. Then \[
        \qquad \qquad \Uptheta^{\dag,\dB}(f) \; := \; \{-1\} \; \cup \; \bigl\{s_\tau \colon \tau \prec^1 \Upgamma(f) \textrm{ with } N_\tau > 0 \textrm{ and } \tau \not\in \dB\bigr\}
    \]
    is a set of candidate poles for $Z_{\mot,\bzero}(f;s)$.
    \item Let $\cS_\circ \subset \Uptheta(f) \smallsetminus \{-1\}$. If $\cF(f;s_\circ)$ is a set of $B_1$-facets of $\Upgamma(f)$ with compatible apices for each $s_\circ \in \cS_\circ$, then $\Uptheta(f) \smallsetminus \cS_\circ$ is a set of candidate poles for $Z_{\mot,\bzero}(f;s)$.
\end{enumerate}
Note \emph{(i)} is a generalization of Theorem~\ref{T:fake-poles}, \emph{(i)} implies \emph{(ii)} by Lemma~\ref{L:union-compatible}, and \emph{(ii)} in particular gives a positive answer to the question posed in \ref{X:motivic-ELT}.
\end{question}

Unfortunately, these are false, as indicated by a counterexample \cite[Example 2.2.2]{larson-payne-stapledon-simplicial-nondegenerate-monodromy-conjecture}. Nevertheless, some refinement should be true, and this is \ChMingHao{currently being pursued} in a separate sequel. For now, we have:

\begin{theorem}[$=$ Theorem~\ref{T:fake-poles-n=3}]\label{T:fake-poles-n=3:compatible}
If $n=3$, Question~\ref{Q:compatible}(i) is positive.
\end{theorem}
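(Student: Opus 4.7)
The plan is to reduce Theorem~\ref{T:fake-poles-n=3:compatible} to Theorem~\ref{T:fake-poles-n=3} via a rigidity lemma specific to $n=3$; the labeled equality ``$=$~Theorem~\ref{T:fake-poles-n=3}'' then follows from this reduction together with Lemma~\ref{L:union-compatible} and Remark~\ref{R:consistent-implies-compatible}.

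The key geometric input is the following lemma tailored to $n=3$: \emph{if $\tau_1 \neq \tau_2$ are two distinct adjacent $B_1$-facets of $\Upgamma_+(f)$ with the same slope $s_\circ$, then for any $B_1$-apex choices $(\bv_{\tau_1},i_1)$ and $(\bv_{\tau_2},i_2)$ on $\tau_1, \tau_2$, one must have $\bv_{\tau_1} = \bv_{\tau_2}$.} I would prove this by first noting that in $n=3$ every $B_1$-facet is a triangle (since $H_i \cap \tau$ is a $1$-face with exactly two endpoints which, together with the apex, constitute the three vertices of $\tau$), and then running a case analysis on which apices are endpoints of the shared edge $\tau_1 \cap \tau_2$. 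The substantive cases are when both apices are endpoints of the shared edge, or when exactly one apex is. In either case, using that the primitive normal $\bu_{\tau_j} \in N^+$ lies in the $2$-dimensional subspace of $N_\RR$ perpendicular to the shared edge, and that the $B_1$-structure pins down the coordinates of the third vertex of each triangle, a direct computation---dual in spirit to the one in the proof of Lemma~\ref{L:union-compatible}---shows that $s_{\tau_1} = s_{\tau_2}$ forces $\bu_{\tau_1} = \bu_{\tau_2}$ as primitive vectors in $N^+$. This in turn forces $\tau_1$ and $\tau_2$ to lie on a common supporting hyperplane of $\Upgamma_+(f)$, so that $\tau_1, \tau_2$ arise as triangulations of a single quadrilateral facet of $\Upgamma_+(f)$, contradicting their being distinct facets. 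The remaining case---neither apex on the shared edge---is ruled out at once: it would require both endpoints of the shared edge to lie in $H_{i_1} \cap H_{i_2}$, which in $n=3$ is a coordinate axis in $M_\RR$, whereas any Newton polyhedron contains at most one vertex on each coordinate axis.

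Granted the key lemma, the proof is a quick unwinding. Let $\dB$ be a set of $B_1$-facets of $\Upgamma_+(f)$ with compatible apices in $n=3$, and set \[
    \cS_\circ \; := \; \bigl\{ s_\circ \in \Uptheta(f) \smallsetminus \{-1\} \; \colon \; \cF(f;s_\circ) \subset \dB \bigr\}.
\]
For each $s_\circ \in \cS_\circ$, the subset $\cF(f;s_\circ) \subseteq \dB$ inherits the distinguished-apex data of $\dB$. By the key lemma, any two adjacent facets in $\cF(f;s_\circ)$ share the distinguished apex inherited from $\dB$; the compatibility hypothesis then yields that they share a base direction, so $\cF(f;s_\circ)$ has consistent base directions. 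Applying Theorem~\ref{T:fake-poles-n=3} to $\cS_\circ$, one concludes that $\Uptheta(f) \smallsetminus \cS_\circ$ is a set of candidate poles for $Z_{\mot,\bzero}(f;s)$. Finally, a direct inspection gives $\Uptheta^{\dag,\dB}(f) = \Uptheta(f) \smallsetminus \cS_\circ$: an element $s_\circ \in \Uptheta(f) \smallsetminus \{-1\}$ lies in $\cS_\circ$ iff $\cF(f;s_\circ) \subseteq \dB$ iff no $\tau \notin \dB$ realizes $s_\tau = s_\circ$ iff $s_\circ \notin \Uptheta^{\dag,\dB}(f)$. The converse direction of the stated equivalence with Theorem~\ref{T:fake-poles-n=3}---namely Theorem~\ref{T:fake-poles-n=3:compatible} $\Rightarrow$ Theorem~\ref{T:fake-poles-n=3}, by setting $\dB := \bigsqcup_{s_\circ \in \cS_\circ} \cF(f;s_\circ)$ and invoking Lemma~\ref{L:union-compatible}---is immediate.

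The hard part will be the coplanarity computation in the two substantive subcases of the key lemma. It requires careful tracking of how the $B_1$-condition constrains the coordinates of the third vertex of each triangle relative to the shared edge, and how the positive-orthant constraint on $\bu_{\tau_j}$ restricts the available slopes; the integrality of vertex coordinates of $\Upgamma_+(f)$ (used to rule out the ``no apex on shared edge'' case) also plays an essential role. Once the key lemma is established, the remainder of the argument is purely formal bookkeeping.
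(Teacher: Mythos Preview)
Your argument is circular. You reduce Theorem~\ref{T:fake-poles-n=3:compatible} to Theorem~\ref{T:fake-poles-n=3} and then invoke the latter as if it were already available. But in this paper the label ``$=$~Theorem~\ref{T:fake-poles-n=3}'' means that Theorem~\ref{T:fake-poles-n=3:compatible} \emph{is} the proof of Theorem~\ref{T:fake-poles-n=3}; there is no prior independent argument for Theorem~\ref{T:fake-poles-n=3} to appeal to (the introduction promises that Theorem~\ref{T:fake-poles} implies Theorem~\ref{T:fake-poles-n=3}, and \S\ref{5.1} delivers that promise \emph{via} Theorem~\ref{T:fake-poles-n=3:compatible}). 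What you have actually written down is only the equivalence of the two formulations, not a proof of either.

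The paper's route is different and avoids this trap. It proves a stronger combinatorial lemma (Lemma~\ref{L:compatible-vs-consistent-n=3}): for $n=3$, a set $\dB$ of $B_1$-facets has compatible apices if and only if it has consistent base directions, with no same-slope restriction. Given this, Theorem~\ref{T:fake-poles-n=3:compatible} follows at once by applying Theorem~\ref{T:fake-poles} directly to $\dB$. Your same-slope rigidity lemma, even if correct, is too weak to play this role: it only yields consistent base directions on each stratum $\cF(f;s_\circ)$, and gluing the strata is precisely the content of Theorem~\ref{T:fake-poles-n=3} that you are trying to establish. There are also secondary issues with your key lemma: the claim that every $B_1$-facet in $n=3$ is a triangle fails when the facet is non-compact (cf.~\ref{X:B_1-facets}(i), the case $J(\tau)\neq\varnothing$), and your sketch that equal slopes force $\bu_{\tau_1}=\bu_{\tau_2}$ for distinct chosen apices is not substantiated---the paper's own handling of adjacent $B_1$-facets with different base directions (in the proof of Lemma~\ref{L:compatible-vs-consistent-n=3}) proceeds by a re-assignment argument, not a coplanarity contradiction, and no same-slope hypothesis enters.
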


Indeed, this follows from Theorem~\ref{T:fake-poles} and the following lemma:

\begin{lemma}\label{L:compatible-vs-consistent-n=3}
Let $n=3$, and let $\dB$ be a set of $B_1$-facets of $\Upgamma(f)$. Then $\dB$ has consistent base directions if and only if $\dB$ has compatible apices.
\end{lemma}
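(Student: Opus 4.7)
The backward direction ($\Leftarrow$) is immediate from Remark~\ref{R:consistent-implies-compatible}. For the forward direction, suppose $\dB$ has compatible apices with some chosen assignment $(\bv_\tau, b(\tau))_{\tau \in \dB}$. The aim is to construct a consistent base direction assignment $b' \colon \dB \to [3]$, possibly by modifying the apex choices. The heart of the argument is the following local claim: for every adjacent pair $\tau_1 \frown \tau_2$ in $\dB$, either $b(\tau_1) = b(\tau_2)$ already, or there exists a common $i \in [3]$ such that both $\tau_1$ and $\tau_2$ admit a valid $B_1$-structure with base direction $i$.

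To establish the local claim, I would case-analyze based on where the chosen apices $\bv_{\tau_1}, \bv_{\tau_2}$ lie relative to the shared edge $e = \tau_1 \cap \tau_2$, which is $1$-dimensional since $n = 3$. The case $\bv_{\tau_1} = \bv_{\tau_2}$ is handled by compatibility directly. If neither apex lies on $e$ and $b(\tau_1) \neq b(\tau_2)$, then $e \subset H_{b(\tau_1)} \cap H_{b(\tau_2)}$ is forced to lie on a coordinate axis; evaluating the facet equations $\bw \cdot \bu_{\tau_i} = N_{\tau_i}$ at the endpoints of $e$ then forces $N_{\tau_i} = 0$, contradicting the $B_1$-facet hypothesis as in Subcase~A of \S\ref{4.2}. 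If exactly one apex lies on $e$, say $\bv_{\tau_1} \in e$ and $\bv_{\tau_2} \notin e$ with $b(\tau_1) \neq b(\tau_2)$, then $\tau_2$ is a triangle with vertices $\bv_{\tau_2}, \bv_{\tau_1}$ and the other endpoint $\bw$ of $e$; combining the supporting hyperplane condition $\bu_{\tau_1} \cdot \bv_{\tau_2} \geq N_{\tau_1}$ with its dual $\bu_{\tau_2} \cdot \bw_1' \geq N_{\tau_2}$ (applied to the third vertex $\bw_1'$ of $\tau_1$) yields $\bu_{\tau_1} \parallel \bu_{\tau_2}$, hence $\tau_1 = \tau_2$, a contradiction. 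Finally, if both apices are the two distinct endpoints of $e$, the same kind of supporting hyperplane analysis pins down the third vertices of $\tau_1, \tau_2$ to satisfy one of two dichotomous configurations (corresponding to $a_2' = 0$ or $b_1' = 0$ in coordinates aligned with $b(\tau_1), b(\tau_2)$); in the former $\bv_{\tau_1}$ is itself an apex of $\tau_2$ with base direction $b(\tau_1)$, and in the latter $\bv_{\tau_2}$ is an apex of $\tau_1$ with base direction $b(\tau_2)$.

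Given the local claim, the remaining step is to globalize it into a consistent assignment on all of $\dB$. I would proceed connected component by connected component under $\frown$. For a fixed component $C$, the plan is to fix one facet $\tau_0 \in C$ and propagate its base direction along paths to all other $\tau \in C$; the local claim guarantees that at each adjacency step, the propagated direction remains valid for the next facet. Care is needed to verify that the propagation is well-defined, i.e., independent of the path taken, and in particular that reassigning the apex of $\tau$ to match $\tau_0$'s direction does not break compatibility with other facets already handled.

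The main obstacle is the third bullet of the case analysis above (the rigidity argument forcing $\bu_{\tau_1} \parallel \bu_{\tau_2}$ in $n = 3$), which is the only place where the low-dimensionality hypothesis is used in an essential way; without it, the analog of the lemma fails, as already suggested by the discussion preceding Question~\ref{Q:compatible}. A secondary obstacle is to make the globalization rigorous: showing that the locally chosen common directions can be glued into a single consistent assignment requires tracking the effect of each local reassignment on the compatible apex data.
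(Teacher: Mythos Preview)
Two of your four cases are vacuous: once $b(\tau_1) \neq b(\tau_2)$, both apices must in fact lie on $e := \tau_1 \cap \tau_2$. Every facet of $\tau_1$ other than $H_{b(\tau_1)} \cap \tau_1$ contains $\bv_{\tau_1}$ (cf.\ \ref{X:B_1-facets}(i)), and $e \neq H_{b(\tau_1)} \cap \tau_1$ because the facet of $\Upgamma_+(f)$ adjacent to $\tau_1$ along that edge is $H_{b(\tau_1)} \cap \Upgamma_+(f)$, which lies in a coordinate hyperplane and hence is not a $B_1$-facet. Thus $\bv_{\tau_1}, \bv_{\tau_2} \in e$ are its two (distinct, by compatibility) endpoints, and your ``neither on $e$'' and ``exactly one on $e$'' subcases never occur; the sketched contradictions there (forcing $N_{\tau_i} = 0$ or $\bu_{\tau_1} \parallel \bu_{\tau_2}$) are neither needed nor correct as written. (Also, your direction labels are swapped: Remark~\ref{R:consistent-implies-compatible} gives consistent $\Rightarrow$ compatible, which is the \emph{forward} implication of the lemma.)

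The genuine gap is the globalization, which you flag but do not close. Your local claim --- that \emph{some} common base direction exists for each bad adjacent pair --- is too weak to propagate along a component: the common direction furnished may force you to reassign the already-visited facet rather than the new one, and nothing ensures these local choices cohere. The paper resolves this not by clever propagation but by proving a sharper local statement: if $\tau_1 \frown \tau_2$ in $\dB$ with $b(\tau_1) \neq b(\tau_2)$, then after possibly swapping labels, $\tau_1$ is the \emph{only} neighbour of $\tau_2$ in $\dB$, and $\bv_{\tau_1}$ is an apex of $\tau_2$ with base direction $b(\tau_1)$. Normalising to $b(\tau_1) = 1$, $b(\tau_2) = 2$, one has $\bv_{\tau_1} = (1,0,a)$ and $\bv_{\tau_2} = (0,1,b)$; a vertex-count in the region $\{a_1 + a_2 \leq 1\}$ then forces one of $\tau_1, \tau_2$ to be either the triangle on $(1,0,a), (0,1,b), (0,0,c)$ or the strip $e + \RR_{\geq 0}\be_3^\vee$, and in either case its remaining edges lie in $H_1$ or $H_2$ and so border no $B_1$-facet. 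With this ``leaf'' property in hand, globalization is immediate: reassign each such $\tau_2$ to $b(\tau_1)$, and no new conflicts can arise. You should aim for this stronger local claim rather than trying to patch the propagation.
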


\begin{proof}
Suppose there exists a compatible set of apices $\{\bv_\tau \colon \tau \in \dB\}$ for $\dB$. We then claim that whenever two facets $\tau_1, \tau_2 \in \dB$ are adjacent and $b(\tau_1) \neq b(\tau_2)$, then one of $\tau_1$ or $\tau_2$, \emph{say $\tau_2$}, satisfies the following: \begin{enumerate}
    \item[(a)] $\tau_1$ is the \emph{only} facet in $\dB$ adjacent to $\tau_2$.
    \item[(b)] $\bv_{\tau_1}$ is also an apex for $\tau_2$, with corresponding base direction $b(\tau_1)$.
\end{enumerate}
Admitting this claim, we re-assign $\tau_2$ with the base direction $b(\tau_1)$. Repeating this re-assignment of base direction for all such pairs $(\tau_1,\tau_2)$ in $\dB$ would then culminate in a set of consistent base directions for $\dB$. To prove the claim, we make three successive observations: \begin{enumerate}
    \item Firstly, every facet of $\tau_1$, with the exception of $H_{b(\tau_1)} \cap \tau_1 \prec^1 \tau_1$, contains $\bv_{\tau_1}$ (cf. \ref{X:B_1-facets}(i)). Thus, $\bv_{\tau_1}$ is a vertex of $\tau_1 \cap \tau_2 \prec^1 \tau_1$. Likewise, $\bv_{\tau_2}$ is a vertex of $\tau_1 \cap \tau_2 \prec^1 \tau_2$. We conclude $\tau_1 \cap \tau_2$ is the line segment in $M_\RR^+$ connecting the vertex $\bv_{\tau_1}$ to the vertex $\bv_{\tau_2}$.
    
    \item Secondly, by re-ordering coordinates if necessary, we may assume $b(\tau_1) = 1$ and $b(\tau_2) = 2$. Since $\bv_{\tau_1} \in \vertex(\tau_1 \cap \tau_2) \smallsetminus \{\bv_{\tau_2}\} \subset \vertex(\tau_2) \smallsetminus \{\bv_{\tau_2}\}$, the $2$\textsuperscript{nd} coordinate of $\bv_{\tau_1}$ is $0$. Likewise, the $1$\textsuperscript{st} coordinate of $\bv_{\tau_2}$ is $0$. Summing up, we have $\bv_{\tau_1} = (1,0,a)$ and $\bv_{\tau_2} = (0,1,b)$ for some $a,b \in \NN$.

    \item Thirdly, we claim that besides $\bv_{\tau_1}$ and $\bv_{\tau_2}$, there can only be at most one other $\bv \in \vertex(\Upgamma(f))$ satisfying $\bv \cdot (\be_1+\be_2) \leq 1$, and moreover such a $\bv$ must equal $(0,0,c)$ for some $c \in \NN$. Indeed, if $\bv = (v_1,v_2,v_3) \in \vertex(\Upgamma(f))$ satisfies $\bv \cdot (\be_1+\be_2) = v_1+v_2 \leq 1$, then $(v_1,v_2) = (0,0)$, $(1,0)$ or $(0,1)$. The case $(v_1,v_2) = (1,0)$ cannot happen since otherwise $\bv - \bv_{\tau_1} \in \RR\be_3^\vee$, but no two distinct vertices of $\Upgamma(f)$ can differ by a vector in $\sum_{i=1}^n{\RR_{\geq 0}\be_i^\vee}$ or in $\sum_{i=1}^n{\RR_{\leq 0}\be_i^\vee}$. Likewise, $(v_1,v_2) \neq (0,1)$, or else $\bv - \bv_{\tau_2} \in \RR\be_3^\vee$. Thus, $(v_1,v_2) = (0,0)$. Note too that there cannot be two distinct $\bv, \bv' \in \vertex(\Upgamma(f))$ of the form $(0,0,c)$ for $c \in \NN$, or else $\bv - \bv' \in \RR\be_3^\vee$. 
\end{enumerate}
Returning back to the claim, we deduce from (iii) that the hyperplane $H_{\be_1+\be_2,1} = \{\ba \in M_\RR^+ \colon \ba \cdot (\be_1+\be_2) = 1\}$ intersects $\Upgamma(f)$ in $(\tau_1 \cap \tau_2) + \RR_{\geq 0}\be_3^\vee$. Thus, if $H_{\be_1+\be_2,1}$ is a supporting hyperplane for $\Upgamma(f)$, either $\tau_1$ or $\tau_2$ is $(\tau_1 \cap \tau_2) + \RR_{\geq 0}\be_3^\vee$. Otherwise, by (iii) there must exist a unique $\bv \in \vertex(\Upgamma(f))$ such that $\bv \cdot (\be_1+\be_2) < 1$, and $\bv = (0,0,c)$ for some $c \in \NN$. Then the convex hull of $(\tau_1 \cap \tau_2) \cup \{\bv\}$ in $M_\RR^+$ is a $2$-dimensional face of $\Upgamma(f)$ that contains $\tau_1 \cap \tau_2$ as a face, and hence, must be either $\tau_1$ or $\tau_2$. In either case one verifies from its respective conclusion that our claim holds.
\end{proof}

\subsection{Other remarks and directions}\label{5.2}

\begin{xpar}[Looking beyond $B_1$-facets]
It is natural to ask if the consideration of $B_1$-facets is sufficient for the monodromy conjecture for non-degenerate polynomials in $n \geq 4$ variables. The answer is \emph{no}: in \cite{esterov-lemahieu-takeuchi-monodromy-conjecture}, the authors described what they call a \emph{$B_2$-facet}, and showed that for the case $n=4$, certain configurations of $B_1$ and $B_2$-facets of $\Upgamma(f)$ contribute to fake poles of $Z_{\tg,\bzero}(f;s)$. For general $n$, the authors also gave, in \cite[Conjecture 1.3(i)]{esterov-lemahieu-takeuchi-monodromy-conjecture}, a conjectural description of when a configuration of facets of $\Upgamma(f)$ could culminate in fake poles of $Z_{\tg,\bzero}(f;s)$. There does not seem to be a clear connection between their conjectural description and our methods. In fact, Larson--Payne--Stapledon recently supplied a counterexample to that conjecture \cite[Example 2.2.1]{larson-payne-stapledon-simplicial-nondegenerate-monodromy-conjecture}. Nevertheless we anticipate the case of $B_2$-facets, which we are pursuing in a sequel, would demystify matters.
\end{xpar}

\begin{xpar}[On Corollary~\ref{C:motivic-nondegenerate-monodromy-conjecture-n=3}]
While half of the proof of Corollary~\ref{C:motivic-nondegenerate-monodromy-conjecture-n=3} was input from this paper, the other half uses observations that are proven separately in \cite{lemahieu-van-proeyen-nondegenerate-surface-singularities}. Nevertheless, we expect that one can use the stack-theoretic embedded desingularization $\Uppi_{\Sigma^\dag} \colon \sX_{\Sigma^\dag} \to \AA^n$ of $V(f) \subset \AA^n$ above $\bzero \in \AA^n$ in \S\ref{4.3} to re-prove the other half of Corollary~\ref{C:motivic-nondegenerate-monodromy-conjecture-n=3}, via a ``stack-theoretic analogue'' of A'Campo's formula \cite{a'campo-monodromy-zeta-function} for the monodromy zeta function, e.g. \cite[Theorem 2.8]{martin-morales-monodromy-zeta-formula-Q-resolns}. For brevity, we omit pursuing this here.
\end{xpar}

%One might also wonder if the methods might provide new insights  \emph{strong}\footnote{This version is stronger due to a result of Malgrange \cite{malgrange-vanishing-cohomology} and Kashiwara \cite{kashiwara-vanishing-cycle-sheaves}, which states that every root of $b_{f,\bzero}(s)$ induces a monodromy eigenvalue of $f$ near $\bzero \in \CC^n$.} motivic monodromy conjecture for non-degenerate hypersurfaces in dimension $n=3$, which speculates that the set of roots of the Bernstein-Sato polynomial $b_{f,\bzero}(s)$ of $f$ at $\bzero \in \AA^n$ is a set of candidate poles for $Z_{\mot,\bzero}(f;s)$. This is still \emph{open} at the time of writing.

\bibliography{monodromy}
\bibliographystyle{dary}

\end{document}